\newtheorem{thm}{Theorem}[section]
\newtheorem{lemma}[thm]{Lemma}
\newtheorem{prop}[thm]{Proposition}
\theoremstyle{definition}
\newtheorem{definition}[thm]{Definition}
\theoremstyle{remark}
\newtheorem{rmk}[thm]{Remark}
\newtheorem{example}[thm]{Example}
\newcommand{\E}{\mathop{{}\mathbb{E}}\nolimits}
\newcommand{\cF}{\mathscr{F}}
\newcommand{\bG}{\mathbb{G}}
\newcommand{\bL}{\mathbb{L}}
\newcommand{\cL}{\mathscr{L}}
\renewcommand{\P}{\mathbb{P}}
\newcommand{\erre}{\mathbb{R}}
\newcommand{\bS}{\mathbb{S}}
\newcommand{\eps}{\varepsilon}
\newcommand{\diff}[2]{{#2}^{(#1)}}
\DeclarePairedDelimiter\abs{\lvert}{\rvert}
\DeclarePairedDelimiter\norm{\lVert}{\rVert}
\DeclarePairedDelimiterX\ip[2]{\langle}{\rangle}{#1,#2}
\numberwithin{equation}{section}
\numberwithin{thm}{section}
\title{Fr\'echet differentiability of mild solutions to SPDEs with
  respect to the initial datum}
\author{Carlo Marinelli${}^1$ and Luca Scarpa${}^2$\\[6pt]
  \scriptsize ${}^1$Department of Mathematics, University College London,
  Gower Street, London WC1E 6BT, UK.\\
  \scriptsize ${}^2$Fakult\"at f\"ur Mathematik, Universit\"at Wien,
  Oskar-Morgenstern-Platz 1, 1090 Wien, Austria.}
\date{\normalsize August 2, 2019}
\begin{document}
\maketitle

\newif\ifbozza
\bozzafalse

\begin{abstract}
  We establish $n$-th order Fr\'echet differentiability with respect
  to the initial datum of mild solutions to a class of jump-diffusions
  in Hilbert spaces. In particular, the coefficients are
  Lipschitz continuous, but their derivatives of order higher than one
  can grow polynomially, and the (multiplicative) noise sources are a
  cylindrical Wiener process and a quasi-left-continuous
  integer-valued random measure.  As preliminary steps, we prove
  well-posedness in the mild sense for this class of equations, as
  well as first-order G\^ateaux differentiability of their solutions
  with respect to the initial datum, extending previous results
  by Marinelli, Pr\'ev\^ot, and R\"ockner in
  several ways.  The differentiability results obtained here are a
  fundamental step to construct classical solutions to non-local
  Kolmogorov equations with sufficiently regular coefficients by
  probabilistic means.
\end{abstract}

\section{Introduction}
Our goal is to obtain existence and uniqueness of mild solutions, and,
especially, their differentiability with respect to the initial datum,
to a class of stochastic evolution equations on Hilbert spaces of the
form
\begin{equation}
  \label{eq:1}
  \left\{
    \begin{aligned}
      &du(t) + A u(t)\,dt = f(t,u(t))\,dt + B(t,u(t))\,dW(t) + \int_Z
      G(t,z,u(t-))\,\bar{\mu}(dt,dz),\\
      &u(0)=u_0.
    \end{aligned}
  \right.
\end{equation}
Here $A$ is a linear $m$-accretive operator, $W$ is a cylindrical
Wiener process, $\bar{\mu}$ is a compensated integer-valued
quasi-left-continuous random measure, and the coefficients $f$, $B$,
$G$ satisfy suitable measurability and Lipschitz continuity
conditions. Precise assumptions on the data of the problem are stated
in Sections~\ref{ssec:not} and \ref{sec:WP} below.

The results extend (and partially supersede) those obtained in
\cite{cm:JFA10} in several ways: (a) well-posedness is established
here in much greater generality, in particular allowing $\bar{\mu}$ to
be a quite general random measure, rather than just a compensated
Poisson measure as in \cite{cm:JFA10}. Moreover, using a more precise
maximal estimate for stochastic convolutions, solutions are no longer
needed to be sought in spaces of processes with finite second moment
(yet more general well-posedness results are going to appear in
\cite{cm:EqLp}); (b) the sufficient conditions on the coefficients of
\eqref{eq:1} for the differentiability of its solution with respect to
the initial datum are the natural ones. For instance, roughly
speaking, Fr\'echet differentiability of $f$, $B$, and $G$ imply
Fr\'echet differentiability of the solution map $u_0 \mapsto u$, while
in \cite{cm:JFA10} a $C^1$ condition on $f$, $B$, and $G$ was
needed. In fact, the proof in \cite{cm:JFA10} was based on an implicit
function theorem with parameters, for which the $C^1$ assumption seems
indispensable, while here we use a direct approach based on the
definition of derivative; (c) we study the $n$-th order
differentiability of the solution map for arbitrary natural $n$,
instead of considering only first and second-order differentiability
as in \cite{cm:JFA10}. In this regard it is worth mentioning that we
just assume that the derivatives of $f$, $B$, and $G$ of order higher
than one satisfy a polynomial growth condition. While this assumption
causes non-trivial technical difficulties, it is more natural than
much more restrictive boundedness conditions that are often found in
the literature: a possible example of coefficients with 
nonbounded higher derivatives is given in Example~\ref{ex:Fn} below.

There are several reasons to study the differentiability of solutions
to stochastic equations in infinite dimensions with respect to the
initial datum (or, more generally, with respect to parameters), among
which the probabilistic construction of solutions to Kolmogorov
equations is our main motivation. This vast and mature field of
investigation is still very active, especially regarding stochastic
equations with additive Wiener noise: see, e.g., \cite{Krylov:diff}
for classical results in the finite-dimensional case, \cite{DPZ} for
basic results in the Hilbertian setting, and
\cite{Bog:K,Cerrai:LNM,DP:K,Stannat:rev} for accounts of more recent
developments. On the other hand, the case of equations with
discontinuous noise, for which the associated Kolmogorov equations are
of non-local type, is much less investigated, especially in the
infinite-dimensional setting (see \cite{cm:JFA10} for simple results
and \cite{cm:DPDE10} for a special case). As an application of the
above-mentioned differentiability results, we shall construct, in a
forthcoming work, classical solutions to non-local Kolmogorov
equations with sufficiently regular coefficients. As is well known,
such results are essential to consider Kolmogorov equations motivated
by applications, that usually have less regular coefficients. In fact,
a typical approach is, roughly speaking, to regularize the
coefficients of the equation, thus obtaining a family of approximating
Kolmogorov equations that are sufficiently simple to have classical
solutions, and to obtain a solution to the original problem passing to
the limit, in an appropriate sense, with respect to the regularization
parameter.  In this spirit, our ultimate goal is the extension of the
results in \cite{cm:POTA19} to non-local Kolmogorov equations
associated to stochastic evolution equations with jumps in a
generalized variational setting as considered in \cite{cm:semimg}.

Since the literature on the problem at hand is very large, it is not
easy to provide an accurate comparison of our results with existing
ones, apart of the remarks already made. We should nonetheless mention
the recent work \cite{Jentzen}, which considers a problem
analogous to ours, but without discontinuous noise term and with
coefficients with bounded derivatives of all orders. Here, the authors
exploit the smoothing property of an analytic semigroup 
and study differentiability in negative order spaces.

The remaining text is organized as follows: in \S\ref{sec:prel}, after
fixing some notation, we recall a characterization of G\^ateaux and
Fr\'echet differentiability, as well as some maximal estimates for
deterministic and stochastic convolutions, all of which are essential
tools. Well-posedness of \eqref{eq:1}, i.e. existence and uniqueness
of a mild solution and its continuous dependence on the initial datum,
is proved in \S\ref{sec:WP}. The remaining sections are devoted to
differentiability properties of the mild solution to \eqref{eq:1} with
respect to the initial datum: first-order G\^ateaux and Fr\'echet
differentiability are treated in \S\ref{sec:G} and \S\ref{sec:F},
respectively, and $n$-th order Fr\'echet differentiability is
considered in \S\ref{sec:F+}.

\medskip

\noindent\textbf{Acknowledgements.} A large part of the work for this
paper was done during several visits of the first-named author to the
Interdiszplin\"ares Zentrum f\"ur Komplexe Systeme (IZKS) at the
University of Bonn, Germany, and a visit to the University of Vienna,
Austria.  The warm hospitality of his hosts (S.~Albeverio and
U.~Stefanelli, respectively) and the good working conditions are
gratefully acknowledged. 
The second-named author is
funded by Vienna Science and Technology Fund 
(WWTF) through Project MA14-009.
The authors are indebted to G.~Luise for
contributing to a preliminary draft.


\ifbozza\newpage\else\fi
\section{Preliminaries}
\label{sec:prel}
\subsection{Notation}
\label{ssec:not}
The spaces of linear bounded operators from a Banach space $E$ to a
further Banach space $F$ will be denoted by $\cL(E, F)$, and
$\cL^2(E,F)$ stands for the space of Hilbert-Schmidt operators from $E$
to $F$ if $E$ and $F$ are Hilbert spaces.  The closed ball of radius
$r>0$ in $E$ will be denoted by $B_r(E)$.

All stochastic elements will be defined on a fixed filtered
probability space $(\Omega,\mathcal{F},\mathbb{F},\P)$, with the
filtration $\mathbb{F}:=(\mathcal{F}_{t})_{t\in [0,T]}$ complete and
right-continuous, and $T>0$ a fixed final time.
Moreover, $H$ will always denote a fixed real separable Hilbert space
with norm $\norm{\cdot}$.
For any $p>0$ and $[t_0,t_1] \subseteq [0,T]$, we shall use the
notation $\bS^p(t_0, t_1)$ for the space of adapted c\`adl\`ag
$H$-valued processes $Y$ such that
\[
  \norm[\big]{Y}_{\bS^p(t_0,t_1)} := \Bigl( \E\sup_{t\in[t_0,
    t_1]}\norm{Y(t)}^p \Bigr)^{1/p} < +\infty,
\]
and we set $\bS^p:=\bS^p(0,T)$. We recall that these are Banach spaces
if $p \geq 1$, and quasi-Banach spaces if
$p \in \mathopen]0,1\mathclose[$. In the latter case the triangle
inequality is reversed, but one has
\[
  \norm[\big]{Y_1+Y_1}_{\bS^p(t_0, t_1)} \leq 2^{1/p} \bigl(
  \norm[\big]{Y_1}_{\bS^p(t_0, t_1)} + \norm[\big]{Y_2}_{\bS^p(t_0,
    t_1)} \bigr),
\]
to which we shall also refer, with a harmless abuse of terminology, as
the triangle inequality. Moreover, $\bS^p(t_0, t_1)$ is a complete
metric space for every $p>0$ when endowed with the distance
\[
  d_{p,t_0,t_1}(Y_1,Y_2) := \norm[\big]{Y_1 - Y_2}_{\bS^p(t_0, t_1)}^{1\wedge
    p},
\]
as it follows from the inequality
$\abs{x+y}^p \leq \abs{x}^p + \abs{y}^p$, which holds true for every
$x$, $y \in \erre$ and $p \in \mathopen]0,1\mathclose[$.
For brevity we shall write $d_p:=d_{p,0,T}$. Entirely
analogously, $L^p(\Omega;H)$ endowed with the distance
\[
  (Y_1,Y_2) \mapsto \norm[\big]{Y_1 - Y_2}_{L^p(\Omega;H)}^{1\wedge p}
\]
is a complete metric space for every $p>0$.

\medskip

Let $K$ be a real separable Hilbert space and $W$ a cylindrical Wiener
process on $K$.
Let $(Z,\mathscr{Z})$ be a Blackwell measurable space and $\mu$ an
integer-valued quasi-left-continuous random measure on
$Z \times [0,T]$, independent of $W$, with dual predictable projetion
(compensator) $\nu$, and $\bar{\mu}:=\mu-\nu$. 
We recall that the assumption on $(Z,\mathscr{Z})$ as a Blackwell
space is usually required in the literature on random measures
(see \cite[\S1a]{JacShi}), and it ensures for example that $\mathscr Z$
is separable and generated by a countable algebra.
We also recall that the
quasi-left-continuity of $\mu$ implies that the random measure $\nu$
is non-atomic (see, e.g., \cite[Corollary~1.19, p.~70]{JacShi}). A map
$g: \Omega \times [0,T] \times Z \to H$ will be called predictable if
it is $\mathscr{P} \otimes \mathscr{Z}$-measurable, where
$\mathscr{P}$ stands for the predictable $\sigma$-algebra of
$\mathbb{F}$ (the target space $H$ is always assumed to be endowed
with the Borel $\sigma$-algebra). Moreover, for any such predictable
map $g$, we set, for any $p$, $q \in \mathopen]0,\infty\mathclose[$,
\[
  \norm[\big]{g}_{L^q(\nu;H)} := \biggl(
  \int_{\mathopen]0,T\mathclose] \times Z} \norm{g}^q\,d\nu
  \biggr)^{1/q},%
  \qquad%
  \norm[\big]{g}_{L^p(\Omega;L^q(\nu;H))} := \biggl( \E\biggl(
  \int_{\mathopen]0,T\mathclose] \times Z} \norm{g}^q\,d\nu
  \biggr)^{p/q} \biggr)^{1/p}
\]
and
\[
  \norm[\big]{g}_{\bG^p} :=
  \begin{cases}
    \norm[\big]{g}_{L^p(\Omega;L^2(\nu;H))} \qquad
    &\text{if } p \in \mathopen]0,1],\\[4pt]
    \displaystyle \inf_{g_1+g_2=g} \bigl( %
    \norm[\big]{g_1}_{L^p(\Omega;L^2(\nu;H))} +
    \norm[\big]{g_2}_{L^p(\Omega;L^p(\nu;H))} \bigr)
    &\text{if }  p \in \mathopen]1,2\mathclose[,\\[4pt]
    \norm[\big]{g}_{L^p(\Omega;L^2(\nu;H))} +
    \norm[\big]{g}_{L^p(\Omega;L^p(\nu;H))} &\text{if } p \in
    [2,\infty\mathclose[,
  \end{cases}
\]
where the infima are taken with respect to
$\mathscr{P} \otimes \mathscr{Z}$-measurable maps $g_1$, $g_2$ only.
One may actually show that $L^p(\Omega;L^q(\nu;H))$ as well as $\bG^p$
are (quasi-)Banach space and that
\[
  \bG^p =
  \begin{cases}
    L^p(\Omega;L^2(\nu;H)), & p \in ]0,1],\\[4pt]
    L^p(\Omega;L^2(\nu;H)) + L^p(\Omega;L^p(\nu;H)), & p \in [1,2],\\[4pt]
    L^p(\Omega;L^2(\nu;H)) \cap L^p(\Omega;L^p(\nu;H)), & p \in
    [2,\infty[.
  \end{cases}
\]
For a proof of this statement, as well as of other properties of such
mixed-norm $L^p$ spaces involving random measures (even in a more
general setting), we refer to \cite{DY:dual}. For us, however, it is
enough to know that they are quasi-normed spaces, and the ``norms''
just introduced on spaces where the underlying measure is random is
only a convenient notation. We shall also need to consider spaces
where $\mathopen]0,T\mathclose] \times Z$ is replaced by
$\mathopen]t_0,t_1\mathclose] \times Z$, with
$0 \leq t_0 \leq t_1 \leq T$, and the corresponding notation will be
self-explanatory.

We shall use standard notation of stochastic calculus: we write, for
instance, $f^*$ and $f_-$ to denote the maximal function
and the left-limit function of a c\`adl\`ag
function $f$, respectively.
Further notation related to deterministic and stochastic
convolutions, as well as to different notions of derivative for maps
between infinite-dimensional spaces, will be introduced where they
first appear. For any $a,b>0$, we use the notation $a\lesssim b$
to indicate that there exists a constant $c>0$ such that $a\leq cb$.
If $c$ depends on some further quantities that we need to keep track of
we shall indicate them in a subscript.
We use the classical notation $\wedge$ and $\vee$ for $\min$ and
$\max$, respectively.

\subsection{Notions of derivative}
\label{ssec:der}
Let $E$, $F$ be Banach spaces, and $G$ be a subspace of $E$.  A
function $\phi: E \to F$ is G\^ateaux differentiable at $x_0 \in E$
along $G$ if there exists a continuous linear map $L \in \cL(G,F)$
such that
\[
  \lim_{\varepsilon \to 0} \frac{\phi(x_0+\varepsilon
    h)-\phi(x_0)}{\varepsilon} = Lh \qquad \forall h \in G.
\]
The linear map $L$, which is necessarily unique, will be denoted by
$D_{\mathcal G}\phi(x_0)$ and is called the G\^ateaux derivative of $\phi$ at
$x_0$ (along the subspace $G$, if $G \neq E$). If $G=E$ and 
$\phi$ is also Lipschitz continuous with Lipschitz constant $L_\phi$, 
it easily follows from the definition that 
$\norm{D_{\mathcal G}\phi(x_0)}_{\cL(E,F)}\leq L_\phi$: indeed,
for all $h\in E$ we have
\[
  \norm{D_{\mathcal G}\phi(x_0)h}_F=\lim_{\eps\to0}\frac{\norm{\phi(x_0+\eps h) - \phi(x_0)}_F}{\eps}\leq
  L_\phi\frac{\norm{x_0+\eps h - x_0}_E}\eps=L_\phi\norm{h}_E.
\]

The map $\phi$ is Fr\'echet differentiable at $x_0 \in E$ along the
subspace $G$ if there exists a continuous linear map $L \in \cL(G,F)$
such that
\[
  \lim_{h \to 0} \frac{\phi(x_0+h) - \phi(x_0) - Lh}{\norm{h}_G}=0.
\]
The (unique) map $L$ will be denoted by $D\phi(x_0)$ and is called the
Fr\'echet derivative of $\phi$ at $x_0$ (along the subspace $G$, in
case $G \neq E$). It is well known that Fr\'echet differentiability implies
G\^ateaux differentiability, while the converse is not true.
We shall often use the following characterization of Fr\'echet
differentiability, of which we include a proof for the convenience of
the reader.
\begin{lemma}
  \label{lm:Frech}
  A map $\phi:E \to F$ is Fr\'echet differentiable at $x_0\in E$ with
  $D\phi(x_0)=L$ if and only if for each bounded set $B \subset E$ one
  has
  \begin{equation}
    \label{equiv:cond}
    \lim_{\varepsilon \to 0}
    \frac{\phi(x_0+\varepsilon h)-\phi(x_0) - \varepsilon Lh}%
    {\varepsilon} = 0
  \end{equation}
  uniformly with respect to $h \in B$.
\end{lemma}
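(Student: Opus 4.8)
The plan is to prove both implications directly from the definitions, using the substitution $h = \eps h'$ with $h' \in B$ to translate between the ``$h \to 0$'' formulation of Fr\'echet differentiability and the ``$\eps \to 0$ uniformly on bounded sets'' formulation of \eqref{equiv:cond}. First I would record the reformulation of Fr\'echet differentiability: $\phi$ is Fr\'echet differentiable at $x_0$ with $D\phi(x_0) = L$ if and only if the remainder $R(h) := \phi(x_0+h) - \phi(x_0) - Lh$ satisfies $\norm{R(h)}_F / \norm{h}_E \to 0$ as $h \to 0$ in $E \setminus \{0\}$; equivalently, for every $\delta > 0$ there is $\rho > 0$ such that $\norm{R(h)}_F \leq \delta \norm{h}_E$ whenever $\norm{h}_E \leq \rho$.

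For the forward implication, assume $\phi$ is Fr\'echet differentiable at $x_0$ and fix a bounded set $B \subset E$, say $B \subseteq B_r(E)$. Given $\delta > 0$, pick $\rho$ as above. For $h \in B$ and $\abs{\eps} \leq \rho/r$ we have $\norm{\eps h}_E \leq \rho$, hence
\[
  \frac{\norm{\phi(x_0+\eps h) - \phi(x_0) - \eps L h}_F}{\abs{\eps}}
  = \frac{\norm{R(\eps h)}_F}{\abs{\eps}}
  \leq \frac{\delta \norm{\eps h}_E}{\abs{\eps}}
  = \delta \norm{h}_E \leq \delta r,
\]
and since $\delta > 0$ was arbitrary this gives \eqref{equiv:cond} uniformly over $h \in B$. (If $\eps h = 0$ the quotient is zero and there is nothing to prove.)

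For the converse, suppose \eqref{equiv:cond} holds uniformly on every bounded set but, for contradiction, that $\phi$ is not Fr\'echet differentiable at $x_0$ with derivative $L$. Then there exist $\delta_0 > 0$ and a sequence $h_n \to 0$ in $E \setminus \{0\}$ with $\norm{R(h_n)}_F > \delta_0 \norm{h_n}_E$. Set $\eps_n := \norm{h_n}_E \to 0$ and $v_n := h_n / \norm{h_n}_E$, so that $\norm{v_n}_E = 1$, i.e. $v_n \in B_1(E) =: B$, and $h_n = \eps_n v_n$. Then
\[
  \frac{\norm{\phi(x_0 + \eps_n v_n) - \phi(x_0) - \eps_n L v_n}_F}{\abs{\eps_n}}
  = \frac{\norm{R(h_n)}_F}{\norm{h_n}_E} > \delta_0
  \qquad \text{for all } n,
\]
contradicting the assumed uniform convergence to $0$ over $h \in B$ as $\eps \to 0$. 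Hence $\phi$ is Fr\'echet differentiable at $x_0$ with $D\phi(x_0) = L$. I do not anticipate a genuine obstacle here; the only point requiring a little care is the bookkeeping with the normalization $v_n = h_n/\norm{h_n}_E$, which is what converts an arbitrary null sequence witnessing non-differentiability into a sequence lying in the fixed bounded set $B_1(E)$, so that the uniform hypothesis can be brought to bear.
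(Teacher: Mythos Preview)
Your proof is correct and follows essentially the same route as the paper's: both directions use the substitution $h=\eps h'$ with $h'$ in a bounded set, and the converse is argued by contradiction via normalizing a sequence $h_n\to 0$ witnessing non-differentiability. The only cosmetic difference is that you take $B=B_1(E)$ while the paper takes $B$ to be the sequence $\{h_n/\norm{h_n}\}$ itself; either choice works.
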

\begin{proof}
  Let $\phi$ be Fr\'echet differentiable at $x_0$ with $D\phi(x_0)=L$,
  and set $R(h):=\phi(x_0+h)-\phi(x_0)-Lh$. Then $R(h)/\norm{h} \to 0$
  as $h \to 0$. Let $B$ be a bounded set and $M$ a real number such
  that $B$ is included in the ball of $E$ of radius $M$ centered at
  zero. For any $\eta>0$ there exists $\delta>0$ such that
  $\norm{R(h)}/\norm{h} \leq \eta/M$ for every $h$ with
  $\norm{h} \leq \delta$. Therefore, for any $\varepsilon$ such that
  $\abs{\varepsilon}\leq \delta/M$, one has
  $\norm{\varepsilon h} \leq \delta$ and
  \[
    \norm{R(\varepsilon h)}\leq \eta \frac{\norm{\varepsilon
        h}}{M}=\eta \abs{\varepsilon}\frac{\norm{h}}{M} \leq \eta
    \abs{\varepsilon},
  \]
  i.e. $\norm{R(\varepsilon h)}/\abs{\varepsilon} \to 0$ as
  $\varepsilon \to 0$ uniformly with respect to $h \in B$.
  Let us now prove the converse implication: assume that
  \eqref{equiv:cond} holds for every $B$, uniformly with respect to
  $h \in B$, and that, by contradiction, $\phi$ is not Fr\'echet
  differentiable at $x_0$, i.e. that $R(h)/\norm{h}$ does not converge
  to zero as $h \to 0$. In particular, there exists a sequence
  $(k_n) \subset E \setminus \{0\}$ converging to zero such that
  $R(k_n)/ \norm{k_n}$ does not converge to zero. We claim that it
  cannot happen that
  \[
    \sup_{h \in B} \frac{\phi(x_0+\varepsilon h)-\phi(x_0) -
      \varepsilon Lh} {\varepsilon} \longrightarrow 0
  \]
  as $\varepsilon \to 0$. In fact, setting
  $\varepsilon_n := \norm{k_n}$, $h_n=k_n/\norm{k_n}$, and $B:=(h_n)$,
  this would imply that
  $\varepsilon_n^{-1} \bigl( \phi(x_0 + \varepsilon_n h_n) -
  \varphi(x_0) - \varepsilon_n Lh_n \bigr)$ converges to zero as
  $n \to \infty$, which is equivalent to $R(k_n)/ \norm{k_n} \to 0$.
\end{proof}

By a simple scaling argument it is evident that it is sufficient to
consider as bounded subset $B$ the unit ball in $E$. One can thus say
that $\phi: E \to F$ is Fr\'echet differentiable at $x_0 \in E$ along
a subspace $G \subseteq E$ if there exists a continuous linear map
$L:G \to F$ such that
\[
  \lim_{\varepsilon \to 0} %
  \frac{\phi(x_0+\varepsilon h)-\phi(x_0) - \varepsilon Lh}%
  {\varepsilon} = 0 \qquad \text{uniformly on }\{h\in G:\text{
  }\norm{h}_G\leq 1.\}
\]

For a comprehensive treatment of differential calculus for functions
between topological vector spaces we refer to \cite{AmbPro}
for basic results in the case of Banach spaces, and to
\cite{AveSmol:diff, Bog:TVS} for the general case.

\subsection{Estimates for deterministic and stochastic convolutions}
\label{ssec:maxi}
Throughout this section $S$ stands for a strongly continuous linear
semigroup of contractions on $H$, and $-A$ for its generator.
Clearly, $A$ is necessarily a linear maximal monotone operator.

Here and in the following we shall use $S \ast g$ to denote
convolution of $S$ and an $H$-valued measurable function $g$ on
$\erre_+$, defined as
\[
  S \ast g: \erre_+ \ni t \longmapsto \int_0^t S(t-s)g(s)\,ds,
\]
under the minimal assumption that $S(t-\cdot)g \in L^1(0,t;H)$ for all
$t$ in a set of interest, usually a bounded interval of $\erre_+$.

The following estimate for convolutions is trivial, but sufficient for
our purposes.
\begin{lemma}
  \label{lm:dc}
  For every $p>0$ and for every measurable adapted process
  $\phi: \Omega \times [0,T] \to H$ such that
  $\phi \in L^p(\Omega; L^1(0,T; H))$,
  it holds that $S\ast\phi\in \bS^p(0,T)$ and
  \[
    \norm[\big]{S \ast \phi}_{\bS^p(0,T)} \leq
    \norm[\big]{\phi}_{L^p(\Omega;L^1(0,T;H))}.
  \]
\end{lemma}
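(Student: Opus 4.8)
The plan is to reduce the whole statement to the pointwise contraction estimate $\norm{S(r)}\leq 1$ for $r\geq 0$, and then integrate. First I would observe that the convolution is well defined: since $\phi(\omega,\cdot)\in L^1(0,T;H)$ for $\P$-a.e. $\omega$, and $\norm{S(t-s)\phi(\omega,s)}\leq\norm{\phi(\omega,s)}$, the map $s\mapsto S(t-s)\phi(\omega,s)$ belongs to $L^1(0,t;H)$ for every such $\omega$ and every $t\in[0,T]$, so $(S\ast\phi)(\omega,t)=\int_0^t S(t-s)\phi(\omega,s)\,ds$ makes sense; joint measurability in $(\omega,t)$ follows from Fubini's theorem, strong continuity of $S$, and measurability of $\phi$.

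Next I would establish the pathwise bound: for $\P$-a.e. $\omega$ and every $t\in[0,T]$,
\[
  \norm[\big]{(S\ast\phi)(\omega,t)}
  \leq \int_0^t \norm[\big]{S(t-s)\phi(\omega,s)}\,ds
  \leq \int_0^t \norm[\big]{\phi(\omega,s)}\,ds
  \leq \norm[\big]{\phi(\omega,\cdot)}_{L^1(0,T;H)},
\]
by the triangle inequality for Bochner integrals together with $\norm{S(t-s)}\leq 1$. Taking the supremum over $t\in[0,T]$, then the $p$-th power and expectation, gives
\[
  \E\sup_{t\in[0,T]}\norm[\big]{(S\ast\phi)(t)}^p
  \leq \E\,\norm[\big]{\phi(\cdot)}_{L^1(0,T;H)}^p
  = \norm[\big]{\phi}_{L^p(\Omega;L^1(0,T;H))}^p,
\]
and the claimed inequality follows by taking $p$-th roots, which is legitimate for $p\in\mathopen]0,1\mathclose[$ as well, being an inequality between nonnegative reals.

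It remains to check that $S\ast\phi$ is genuinely an element of $\bS^p(0,T)$, i.e.\ that it is adapted with c\`adl\`ag paths. Adaptedness is clear, since $(S\ast\phi)(t)$ is built from $(\phi(s))_{s\leq t}$. For path regularity I would show that $t\mapsto(S\ast\phi)(\omega,t)$ is in fact continuous on $[0,T]$ for $\P$-a.e.\ $\omega$: writing $(S\ast\phi)(\omega,t)=\int_0^T\mathbf 1_{[0,t]}(s)\,S(t-s)\phi(\omega,s)\,ds$, for $t_n\to t$ the integrands converge to $\mathbf 1_{[0,t]}(s)S(t-s)\phi(\omega,s)$ for a.e.\ $s$ by strong continuity of $S$, while being dominated by the integrable function $\norm{\phi(\omega,\cdot)}$, so dominated convergence applies; in particular the paths are c\`adl\`ag. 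Combined with the estimate above, this yields $S\ast\phi\in\bS^p(0,T)$. There is essentially no serious obstacle here — the statement is, as remarked, trivial — and the only points deserving an explicit line of justification are the well-definedness of the convolution and the continuity of its paths, both immediate consequences of the contraction property and dominated convergence.
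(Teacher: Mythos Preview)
Your proof is correct and follows essentially the same approach as the paper: the core estimate is exactly the paper's one-line argument (Minkowski/triangle inequality for Bochner integrals plus contractivity of $S$), and you additionally spell out the well-definedness, adaptedness, and path continuity that the paper leaves implicit since it flags the lemma as ``trivial''.
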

\begin{proof}
  Minkowski's inequality and contractivity of $S$ immediately yield
  \[
    \E\sup_{t\leq T} \norm[\bigg]{\int_0^t S(t-s)\phi(s)\,ds}^p \leq
    \E\biggl( \sup_{t\leq T} \int_0^t \norm[\big]{S(t-s)\phi(s)}\,ds
    \biggr)^p \leq \E\biggl( \int_0^T \norm[\big]{\phi(s)}\,ds
    \biggr)^p. \qedhere
  \]
\end{proof}

We shall also need estimates for stochastic convolutions with respect
to the cylindrical Wiener process $W$, for which we shall always use
the following notation: for any $\cL^2(K,H)$-valued process $G$, the
stochastic convolution $S \diamond G$ is the process defined as
\[
  S \diamond G(t) := \int_0^t S(t-s)G(s)\,dW(s), \qquad t \geq 0,
\]
under a stochastic integrability assumption on $S(t-\cdot)G$.
There is an extensive literature on maximal estimates for stochastic
convolutions, mostly obtained through the so-called factorization
method by Da Prato, Kwapie\'n, and Zabczyk \cite{DPKZ}, which requires
$-A$ to generate a holomorphic semigroup. The following estimate
instead requires $A$ to be maximal monotone and can be proved by
relatively elementary techniques of stochastic calculus (see, e.g.,
\cite{cm:SIMA18} for a proof in a more general context).
\begin{prop}
  \label{prop:sc}
  For every $p>0$ and for every
  $G \in L^p(\Omega;L^2(0,T;\cL^{2}(K,H)))$ progressively
  measurable, the sto\-chas\-tic convolution $S \diamond G$ admits a
  modification in $\bS^p(0,T)$ and
  \[
    \norm[\big]{S \diamond G}_{\bS^p(0,T)} \lesssim_p
    \norm[\big]{G}_{L^p(\Omega;L^2(0,T;\cL^2(K,H)))}.
  \]
\end{prop}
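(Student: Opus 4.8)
The plan is to prove Proposition~\ref{prop:sc} first for exponents $p\ge 2$, where the Burkholder--Davis--Gundy inequality applies directly, and then to extend to $p\in\mathopen]0,2\mathclose[$ by a stopping-time / localization argument in the spirit of Lenglart's domination inequality. For $p\ge 2$, I would fix a Yosida-type approximation $A_\lambda := A(I+\lambda A)^{-1}$ of $A$ (well defined since $A$ is maximal monotone), with associated semigroup $S_\lambda := e^{-tA_\lambda}$ of contractions, and consider the regularized convolution $X_\lambda(t):=\int_0^t S_\lambda(t-s)G(s)\,dW(s)$. The key point of the ``elementary'' approach (as in \cite{cm:SIMA18}) is that, unlike the true stochastic convolution, $X_\lambda$ solves a genuine (non-convolution) linear SPDE
\[
  dX_\lambda(t) + A_\lambda X_\lambda(t)\,dt = G(t)\,dW(t), \qquad X_\lambda(0)=0,
\]
so that It\^o's formula can be applied to $\norm{X_\lambda(t)}^2$. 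The term $-\langle A_\lambda X_\lambda,X_\lambda\rangle$ has the right sign by monotonicity of $A_\lambda$ and can be dropped, leaving
\[
  \norm{X_\lambda(t)}^2 \le 2\int_0^t \langle X_\lambda(s), G(s)\,dW(s)\rangle + \int_0^t \norm{G(s)}_{\cL^2(K,H)}^2\,ds.
\]

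From here the recipe is standard: take the supremum over $t\le T$, apply the $L^{p/2}(\Omega)$-quasinorm, and use BDG on the martingale term together with the elementary bound $\int_0^t\langle X_\lambda,G\,dW\rangle$ having quadratic variation controlled by $\sup_{s\le t}\norm{X_\lambda(s)}^2\cdot\int_0^t\norm{G}^2\,ds$. A Young-type inequality absorbs the factor $\sup\norm{X_\lambda}$ into the left-hand side, yielding
\[
  \norm[\big]{X_\lambda}_{\bS^p(0,T)} \lesssim_p \norm[\big]{G}_{L^p(\Omega;L^2(0,T;\cL^2(K,H)))}
\]
with a constant \emph{independent of $\lambda$}. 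One then passes to the limit $\lambda\to0$: since $S_\lambda(t)x\to S(t)x$ for every $x$ and $t$ (by the standard convergence theorem for Yosida approximations), $X_\lambda(t)\to S\diamond G(t)$ in $L^2(\Omega;H)$ for each $t$, and the uniform $\bS^p$ bound plus Fatou's lemma (applied to $\E\sup_t$) gives both the existence of a modification of $S\diamond G$ in $\bS^p(0,T)$ and the claimed estimate for $p\ge2$.

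For $p\in\mathopen]0,2\mathclose[$, the martingale term is no longer in a Banach space and absorbing tricks fail, so I would instead argue by localization: let $\tau_N:=\inf\{t:\norm{S\diamond G(t)}>N\}\wedge T$, apply the $p=2$ (or any fixed exponent $\ge1$) estimate to the stopped process together with Lenglart's inequality, which precisely allows one to pass from a domination estimate in expectation to a bound on $\E\sup_{t}\norm{S\diamond G(t)}^p$ for all $p\in\mathopen]0,1\mathclose[$ (and then $p\in[1,2\mathclose[$ by interpolation or a direct repetition); the role of the dominating process is played by $\int_0^\cdot\norm{G}^2\,ds$. I expect the main technical obstacle to lie not in the analytic estimates but in the careful bookkeeping of the approximation: ensuring that $X_\lambda$ is genuinely a well-defined c\`adl\`ag (here continuous) process with the stated It\^o dynamics under only progressive measurability and the integrability hypothesis on $G$, that all constants are tracked as independent of $\lambda$, and that the limiting process one obtains really is (a modification of) $S\diamond G$ rather than some other object — this identification step, together with making the low-$p$ Lenglart argument fully rigorous in the quasi-Banach setting $\bS^p$, is where the proof needs the most care.
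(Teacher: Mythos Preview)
The paper does not give its own proof of this proposition; it simply states that the estimate ``can be proved by relatively elementary techniques of stochastic calculus'' and refers to \cite{cm:SIMA18}. Your plan---Yosida approximation of $A$, It\^o's formula for $\norm{X_\lambda}^2$ with the monotone drift dropped, BDG on the martingale term, absorption via Young's inequality, and passage to the limit $\lambda\to 0$---is exactly the approach of that reference, so your proposal is correct and aligns with what the paper invokes.

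One remark that may simplify your write-up: the split into $p\ge 2$ and $p<2$ is not really necessary. Since the BDG inequality for real local martingales holds for \emph{all} exponents $p>0$, the same chain of inequalities (take $\sup_t$, apply $(\cdot)^{p/2}$, BDG on the stochastic integral, then Young/Cauchy--Schwarz and absorb) goes through uniformly in $p>0$; the only extra care for small $p$ is a standard localization to ensure the left-hand side is a priori finite before absorbing. This avoids the separate Lenglart route, though your Lenglart argument is also valid. The genuinely delicate part, as you correctly flag, is the identification of the limit: rather than Fatou on $\E\sup_t$ from mere pointwise-in-$t$ convergence, it is cleaner to apply the $\lambda$-uniform estimate to the difference $X_\lambda - X_\mu$ (the same It\^o argument works, with $S_\lambda - S_\mu$ in place of $G$ in the relevant norms) and conclude that $(X_\lambda)$ is Cauchy in $\bS^p$, which simultaneously yields the c\`adl\`ag modification and the identification with $S\diamond G$.
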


Finally, a key role is played by the following maximal estimate for
stochastic convolutions with respect to the compensated random measure
$\bar{\mu}$. For a predictable $H$-valued process $g$, the stochastic
convolution of $g$ with respect to $\bar{\mu}$ will be denote by
$S \diamond_\mu g$ and defined as
\[
  S \diamond_\mu g(t) := \int_{\mathopen]0,t\mathclose]}\!\int_Z
  S(t-s) g(s,z) \,\bar{\mu}(ds,dz), \qquad t \geq 0,
\]
under a stochastic integrability assumption on $S(t-\cdot)g$ with
respect to $\bar{\mu}$.
\begin{lemma}
  \label{prop:scs}
  For every $p>0$ and for every $g \in \bG^p$, the stochastic convolution
  $S \diamond_\mu g$ admits a c\`adl\`ag modification and
  \[
    \norm[\big]{S \diamond_\mu g}_{\bS^p} \lesssim
    \norm[\big]{g}_{\bG^p}.
  \]
\end{lemma}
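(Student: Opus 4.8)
The plan is to reduce the estimate to a maximal inequality for an ordinary martingale by dilating the contraction semigroup $S$, and then to prove that inequality for plain stochastic integrals with respect to $\bar\mu$. By the Sz.-Nagy dilation theorem (applied, if needed, after complexification of $H$), there are a separable Hilbert space $\widetilde H\supseteq H$ and a strongly continuous group $(U(t))_{t\in\erre}$ of unitaries on $\widetilde H$ with $S(t)=P\,U(t)|_H$ for all $t\geq0$, $P$ being the orthogonal projection of $\widetilde H$ onto $H$. Since $U(t-s)=U(t)U(-s)$ and the stochastic integral with respect to $\bar\mu$ commutes with bounded deterministic operators, for each fixed $t$
\[
  S\diamond_\mu g(t)=\int_{\mathopen]0,t\mathclose]\times Z}P\,U(t)U(-s)\,g(s,z)\,\bar\mu(ds,dz)=P\,U(t)\,M(t),\qquad
  M(t):=\int_{\mathopen]0,t\mathclose]\times Z}U(-s)\,g(s,z)\,\bar\mu(ds,dz),
\]
where $(s,z)\mapsto U(-s)g(s,z)$ is predictable and $\widetilde H$-valued with $\norm{U(-s)g(s,z)}_{\widetilde H}=\norm{g(s,z)}_H$ pointwise, hence lies in the space $\bG^p$ built on $\widetilde H$ (denoted $\bG^p(\widetilde H)$) with $\norm{U(-\cdot)g}_{\bG^p(\widetilde H)}=\norm{g}_{\bG^p}$, and $M$ is a c\`adl\`ag local martingale, a genuine martingale when $p\geq1$. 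As $\norm{P\,U(t)}_{\cL(\widetilde H,H)}\leq1$, one has $\sup_{t\leq T}\norm{S\diamond_\mu g(t)}_H\leq\sup_{t\leq T}\norm{M(t)}_{\widetilde H}$ pathwise; moreover $t\mapsto P\,U(t)M(t)$ is c\`adl\`ag because $U$ is strongly continuous with $\norm{U(t)}\leq1$ on $[0,T]$ and $M$ is c\`adl\`ag, and this furnishes the required modification of $S\diamond_\mu g$. (One could also bypass the dilation and work with the strong solutions $S_\lambda\diamond_\mu g$ associated with the Yosida approximations $A_\lambda$ of $A$, applying It\^o's formula to $\norm{\cdot}^2$ and using accretivity of $A_\lambda$, and then pass to the limit $\lambda\to\infty$ by a density argument; this is arguably in the spirit of Proposition~\ref{prop:sc}.)

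It then remains to prove the ``flat'' maximal inequality $\norm[\big]{\int_{\mathopen]0,\cdot\mathclose]\times Z}h\,d\bar\mu}_{\bS^p(\widetilde H)}\lesssim_p\norm{h}_{\bG^p(\widetilde H)}$ for predictable $\widetilde H$-valued $h$, which I would handle according to the value of $p$. The Burkholder--Davis--Gundy inequality in Hilbert space (which holds for every $p>0$) bounds the left-hand side by a constant times $\E\bigl(\int_{\mathopen]0,T\mathclose]\times Z}\norm{h}^2\,d\mu\bigr)^{p/2}$, the quadratic variation of the integral being $\int\norm{h}^2\,d\mu$ since $\nu$ is non-atomic. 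For $p\in[2,\infty\mathclose[$ the classical Bichteler--Jacod/Kunita-type moment inequality for the nonnegative integrand $\norm{h}^2$ gives $\E\bigl(\int\norm{h}^2\,d\mu\bigr)^{p/2}\lesssim_p\E\bigl(\int\norm{h}^2\,d\nu\bigr)^{p/2}+\E\int\norm{h}^p\,d\nu=\norm{h}_{\bG^p(\widetilde H)}^p$. For $p\in\mathopen]0,1]$, since $p/2<1$ and $t\mapsto\int_{\mathopen]0,t\mathclose]\times Z}\norm{h}^2\,d\mu$ is adapted, c\`adl\`ag and increasing with predictable compensator $t\mapsto\int_{\mathopen]0,t\mathclose]\times Z}\norm{h}^2\,d\nu$, Lenglart's domination inequality (see, e.g., \cite{JacShi}) bounds it by a constant times $\E\bigl(\int\norm{h}^2\,d\nu\bigr)^{p/2}=\norm{h}_{\bG^p(\widetilde H)}^p$. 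Finally, for $p\in\mathopen]1,2\mathclose[$, given any decomposition $h=h_1+h_2$ one splits the integral accordingly, estimates the $h_1$-part as in the case $p\leq1$ (using $h_1\in L^p(\Omega;L^2(\nu))$) and the $h_2$-part by the elementary bound $\bigl(\int\norm{h_2}^2\,d\mu\bigr)^{p/2}\leq\int\norm{h_2}^p\,d\mu$ (valid as $p/2<1$) followed by compensation; the (quasi-)triangle inequality in $\bS^p$ and the infimum over decompositions then give the bound $\norm{h}_{\bG^p(\widetilde H)}$.

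Combining the two parts yields $\norm{S\diamond_\mu g}_{\bS^p}\leq\norm{M}_{\bS^p(\widetilde H)}\lesssim_p\norm{U(-\cdot)g}_{\bG^p(\widetilde H)}=\norm{g}_{\bG^p}$, which is the assertion. I expect the real obstacle to be the range $p<2$ of the flat inequality: there Burkholder--Davis--Gundy alone does not reproduce the $\bG^p$-quasinorm on the right-hand side, and one must combine it with Lenglart's domination inequality and, in the subrange $\mathopen]1,2\mathclose[$, carefully match the decomposition entering the definition of $\bG^p$. Everything else---joint measurability and stochastic integrability of $U(-\cdot)g$, commutation of the stochastic integral with $P\,U(t)$, and the c\`adl\`ag modification (and, in the Yosida-based alternative, the accretivity-plus-It\^o estimate and the density argument for the limit)---is routine.
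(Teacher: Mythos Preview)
The paper does not actually prove this lemma; it simply refers to \cite{cm:SemProba14} for a proof. Your sketch is correct and is, in fact, the standard route taken in that reference: dilate the contraction semigroup to a unitary group via Sz.-Nagy, reduce the convolution to $P\,U(t)M(t)$ with $M=\int U(-\cdot)g\,d\bar\mu$ a c\`adl\`ag purely discontinuous local martingale, and then prove the flat maximal inequality $\norm{M^*}_{L^p}\lesssim_p\norm{g}_{\bG^p}$ by combining BDG with Lenglart domination for $p/2<1$ and the Bichteler--Jacod/Kunita bound for $p\geq 2$. Your handling of the awkward range $p\in\mathopen]1,2\mathclose[$---matching the split $h=h_1+h_2$ to the infimum in the definition of $\bG^p$, treating the $L^2$-part via Lenglart and the $L^p$-part via the subadditivity $(\sum a_i)^{p/2}\leq\sum a_i^{p/2}$ on the atoms of $\mu$ followed by compensation---is exactly the right idea, and the c\`adl\`ag modification indeed comes for free from the representation $P\,U(t)M(t)$. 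The Yosida-based alternative you mention is also viable and is closer in spirit to how Proposition~\ref{prop:sc} is proved in \cite{cm:SIMA18}.
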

\noindent A proof can be found in \cite{cm:SemProba14}. A
generalization of this inequality to $L^q$-valued processes
will appear in \cite{cm:EqLp}.


\ifbozza\newpage\else\fi
\section{Well-posedness}
\label{sec:WP}
This section is devoted to the proof of well-posedness of equation
\eqref{eq:1}. We show existence and uniqueness of a mild solution, as
well as its continuous dependence on the initial datum, in spaces of
processes with finite moments of order
$p \in \mathopen]0,+\infty\mathclose[$. Although only the case
$p \geq 1$ is needed in the following sections on differentiability of
the solution with respect to the initial datum, the general case $p>0$
is necessary to deal with initial data or driving random measures
admitting finite moments of order strictly less than one. An example
is given by $\alpha$-stable random measures with $\alpha<1$.

The following assumptions (A0)--(A4) on the coefficients and the initial
datum of \eqref{eq:1} are in force throughout the paper.
\begin{itemize}
\item[(A0)] The initial datum $u_0$ is an $\cF_0$-measurable random
  variable with values in $H$;
\item[(A1)] $A$ is a linear maximal monotone operator on $H$, and $S$
is the strongly continuous semigroup of contractions generated by $-A$ on $H$;
\item[(A2)] The function $f: \Omega \times [0,T] \times H \to H$ is
  such that $f(\cdot,\cdot,x)$ is measurable and adapted for every
  $x \in H$, and there exists a constant $C_f>0$ such that
  \begin{align*}
    \norm{f(\omega,t,x)} &\leq C_f \bigl( 1 + \norm{x}\bigr),\\
    \norm{f(\omega,t,x)-f(\omega,t,y)} &\leq C_f \norm{x-y}
  \end{align*}
  for all $\omega \in \Omega$, $t \in [0,T]$, and $x,\,y \in H$;
\item[(A3)] The function
  $B: \Omega \times [0,T] \times H \to \cL^2(K,H)$ is such that
  $B(\cdot,\cdot,x)$ is progressively measurable for all $x \in H$,
  and there exists a constant $C_B>0$ such that
  \begin{align*}
    \norm[\big]{B(\omega,t,x)}_{\cL^2(K,H)}
    &\leq C_B \bigl( 1 + \norm{x}\bigr),\\
    \norm[\big]{B(\omega,t,x) - B(\omega,t,y)}_{\cL^2(K,H)}
    &\leq C_B \norm{x-y}
  \end{align*}
  for all $\omega \in \Omega$, $t \in [0,T]$, and $x,\,y \in H$;
\item[(A4)] The function
  $G:\Omega \times [0,T] \times Z \times H \to H$ is such that
  $G(\cdot,\cdot,\cdot,x)$ is
  $\mathscr{P} \otimes \mathscr{Z}$-measurable for all $x \in
  H$. Moreover,
  \begin{itemize}
  \item[(i)] if $p\leq 1$ or $p \geq 2$, then there exists a
    $\mathscr{P} \otimes \mathscr{Z}$-measurable function
    $g:\Omega \times [0,T] \times Z \to \erre$ such that
  \begin{align*}
    \norm[\big]{G(\omega,t,z,x)-G(\omega,t,z,y)} 
    &\leq g(\omega,t,z) \norm{x-y},\\
    \norm[\big]{G(\omega,t,z,x)}
    &\leq g(\omega,t,z) \bigl( 1 + \norm{x} \bigr)
  \end{align*}
  for all $\omega \in \Omega$, $t \in [0,T]$, $z\in Z$ and $x,\,y \in H$;
\item[(ii)] if $1< p<2$, then there exist functions $G_1$,
  $G_2: \Omega \times [0,T] \times Z \times H \to H$, satisfying the
  same measurability properties of $G$, with $G=G_1+G_2$, and
  $\mathscr{P} \otimes \mathscr{Z}$-measurable functions $g_1$,
  $g_2:\Omega \times [0,T] \times Z \to \erre$ such that, for
  $j \in \{1,2\}$,
  \begin{align*}
    \norm[\big]{G_j(\omega,t,z,x)-G_j(\omega,t,z,y)} 
    &\leq g_j(\omega,t,z) \norm{x-y},\\
    \norm[\big]{G_j(\omega,t,z,x)}
    &\leq g_j(\omega,t,z) \bigl( 1 + \norm{x} \bigr)
  \end{align*}
  for all $\omega \in \Omega$, $t \in [0,T]$, $z\in Z$ and $x,\,y \in H$.
  \end{itemize}
\end{itemize}
Further assumptions will be made when needed.

\medskip

The concept of solution to \eqref{eq:1} we shall work with is the
following.
\begin{definition}
  An $H$-valued adapted c\`adl\`ag process $u$ is a mild solution to
  \eqref{eq:1} if
  \begin{itemize}
  \item[(i)] $S(t-\cdot)f(u) \in L^1(0,t;H)$ for all $t \in [0,T]$
    $\P$-a.s.;
  \item[(ii)] $S(t-\cdot)B(u) \in L^2(0,t;\cL^2(K,H))$ for all
    $t \in [0,T]$ $\P$-a.s.;
  \item[(iii)] there exists $p>0$ such that
    $S(t-\cdot)G(u_-) \in \bG_p(0,t)$ for all $t \in [0,T]$;
  \item[(iv)] one has
    \[
      u = S(\cdot)u_0 + S \ast f(u) + S \diamond B(u) + S \diamond_\mu
      G(u_-)
    \]
    as an identity in the sense of modifications.
  \end{itemize}
\end{definition}

In order to formulate the well-posedness result in the mild sense for
\eqref{eq:1}, it is convenient to introduce an assumption depending on
a parameter $p \in \mathopen]0,\infty\mathclose[$:
\begin{itemize}
\item[(A5${}_p$)] Setting $g_1:=g_2:=g/2$ if
  $p \not\in \mathopen]1,2\mathclose[$, there exists a continuous increasing
  function $\kappa: \erre_+ \to \erre_+$, with $\kappa(0)=0$, such
  that
  \[
    1_{\{p>1\}} \biggl(
    \int_{Z \times [t_0,t_1]} g_1^p(\omega,s,z)\,d\nu
    \biggr)^{1/p}
    + \biggl( \int_{Z \times [t_0,t_1]} g_2^2(\omega,s,z)\,d\nu
    \biggr)^{1/2} \leq \kappa(t_1-t_0)
    \qquad \forall \omega \in \Omega.
  \]
\end{itemize}

\begin{thm}
  \label{thm:WP}
  Let $p>0$ and \emph{(A5${}_p$)} be satisfied. For any
  $u_0 \in L^p(\Omega;H)$, equation \eqref{eq:1} admits a unique mild
  solution $u \in \bS^p$ such that $\norm{u}_{\bS^p}\lesssim
  1+\norm{u_0}_{L^p(\Omega; H)}$, with implicit constant
  independent of $u_0$. Moreover, the solution map $u_0 \mapsto u$
  is Lipschitz continuous from $L^p(\Omega;H)$ to $\bS^p$.
\end{thm}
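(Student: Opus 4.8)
The plan is to obtain both existence-uniqueness and Lipschitz dependence simultaneously by a fixed-point argument on a suitably chosen complete metric space. First I would define, for a fixed $u_0 \in L^p(\Omega;H)$, the map $\Phi = \Phi_{u_0}$ on $\bS^p(0,\tau)$ by
\[
  \Phi(u) := S(\cdot)u_0 + S \ast f(u) + S \diamond B(u) + S \diamond_\mu G(u_-),
\]
and check that it maps $\bS^p(0,\tau)$ into itself. This is where the three convolution estimates from \S\ref{ssec:maxi} come in: Lemma~\ref{lm:dc} controls $S\ast f(u)$ by $\norm{f(u)}_{L^p(\Omega;L^1(0,\tau;H))}$, which by the linear growth in (A2) is bounded by $C_f\tau(1+\norm{u}_{\bS^p(0,\tau)})$ up to constants; Proposition~\ref{prop:sc} controls $S\diamond B(u)$ by $\norm{B(u)}_{L^p(\Omega;L^2(0,\tau;\cL^2(K,H)))}\lesssim C_B\tau^{1/2}(1+\norm{u}_{\bS^p(0,\tau)})$ via (A3); and Lemma~\ref{prop:scs} controls $S\diamond_\mu G(u_-)$ by $\norm{G(u_-)}_{\bG^p(0,\tau)}$, which by (A4) and assumption (A5${}_p$) is bounded by $\kappa(\tau)(1+\norm{u}_{\bS^p(0,\tau)})$ — here the role of (A5${}_p$) is precisely to turn the $\bG^p$-norm of the coefficient into a small factor $\kappa(\tau)\to 0$. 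Of course $S(\cdot)u_0 \in \bS^p$ with norm $\leq \norm{u_0}_{L^p(\Omega;H)}$ by contractivity of $S$.

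Next I would estimate $\Phi(u_1) - \Phi(u_2)$. Using the \emph{same three estimates} together with the Lipschitz parts of (A2)--(A4) and (A5${}_p$), one gets, for $p \geq 1$,
\[
  \norm[\big]{\Phi(u_1)-\Phi(u_2)}_{\bS^p(0,\tau)} \leq \bigl( C_f\tau + c_p C_B \tau^{1/2} + c\,\kappa(\tau) \bigr)\,\norm[\big]{u_1-u_2}_{\bS^p(0,\tau)},
\]
and analogously for $p<1$ with the reversed/quasi triangle inequality, so one works with the metric $d_{p,0,\tau}$ and the estimate is on $d_{p,0,\tau}(\Phi(u_1),\Phi(u_2))$. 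Since $\tau \mapsto C_f\tau + c_pC_B\tau^{1/2} + c\,\kappa(\tau)$ is continuous, increasing, and vanishes at $0$, we may fix $\tau>0$ small enough (note $\tau$ depends only on $C_f$, $C_B$, $\kappa$, $p$, \emph{not} on $u_0$) that this factor is $\leq 1/2$, making $\Phi$ a contraction on $\bS^p(0,\tau)$. By the Banach fixed point theorem (valid on complete metric spaces, and $\bS^p(0,\tau)$ with $d_{p,0,\tau}$ is one for every $p>0$) there is a unique mild solution on $[0,\tau]$. Since $\tau$ is uniform, one iterates on $[\tau,2\tau]$, $[2\tau,3\tau]$, …, using $u(k\tau)$ as the new initial datum, and patches the pieces together after finitely many steps to get a unique mild solution $u \in \bS^p(0,T)$; the a~priori bound $\norm{u}_{\bS^p(0,\tau)} \lesssim 1 + \norm{u_0}_{L^p(\Omega;H)}$ on the first interval follows from the self-mapping estimate with the $1/2$-contraction factor absorbed, and propagates through the finitely many steps.

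Finally, for Lipschitz dependence: given $u_0^1, u_0^2 \in L^p(\Omega;H)$ with solutions $u^1, u^2$, the difference $u^1 - u^2 = S(\cdot)(u_0^1 - u_0^2) + \bigl(\Phi_{u_0^1}(u^1) - \Phi_{u_0^2}(u^1)\bigr) + \bigl(\Phi_{u_0^2}(u^1) - \Phi_{u_0^2}(u^2)\bigr)$, so the same contraction estimate on $[0,\tau]$ gives $\norm{u^1-u^2}_{\bS^p(0,\tau)} \leq \norm{u_0^1-u_0^2}_{L^p(\Omega;H)} + \tfrac12 \norm{u^1-u^2}_{\bS^p(0,\tau)}$ (up to the quasi-norm constant for $p<1$), whence $\norm{u^1-u^2}_{\bS^p(0,\tau)} \lesssim \norm{u_0^1-u_0^2}_{L^p(\Omega;H)}$, and iterating over the finitely many subintervals yields the Lipschitz bound on $[0,T]$. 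The main obstacle I expect is the bookkeeping for $p \in \mathopen]0,1\mathclose[$ (and the splitting regime $p\in\mathopen]1,2\mathclose[$ where $G = G_1 + G_2$ and $\bG^p$ is a genuine sum space): one must consistently use the $1\wedge p$-power metrics and the reversed triangle inequality, and verify that the smallness of $\tau$ can still be arranged — this is routine but is the place where care is needed; everything else is a direct application of the maximal estimates of \S\ref{ssec:maxi}.
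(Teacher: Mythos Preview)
Your proposal is correct and follows essentially the same approach as the paper: a Banach fixed-point argument for the map $u \mapsto S(\cdot)u_0 + S\ast f(u) + S\diamond B(u) + S\diamond_\mu G(u_-)$ on $\bS^p(0,\tau)$ with $\tau$ small, using the three convolution estimates of \S\ref{ssec:maxi} together with (A2)--(A5${}_p$), followed by the standard patching to reach $T$, and the Lipschitz bound via the same contraction estimate applied to two solutions. The paper carries out exactly these steps, including the use of the $1\wedge p$-power metric $d_{p,0,T_0}$ for $p<1$ and the explicit treatment of the $\bG^p$-splitting for $p\in\mathopen]1,2\mathclose[$ that you flag as the places requiring care.
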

\begin{proof}
  We are going to use a fixed-point argument in the metric space
  $(\bS^p(0,T_0),d_{p,0,T_0})$, with $T_0$ sufficiently small. By a classical
  patching argument, this will imply existence and uniqueness of a
  solution in $\bS^p(0,T)$. Let $\Gamma$ be the map formally defined on
  $L^p(\Omega;H) \times \bS^p$ as
  \[
    \Gamma: (u_0,u) \longmapsto S(\cdot) u_0 + S \ast f(u) + S
    \diamond B(u) + S \diamond_\mu G(u_-).
  \]
  Let us show that $\Gamma$ is in fact well defined on
  $L^p(\Omega;H) \times \bS^p$ and that its image is contained in
  $\bS^p$: one has
  \begin{align}\label{est:u1}
    \norm[\big]{\Gamma(u_0,u)}_{\bS^p}
    &\lesssim \norm[\big]{S(\cdot)u_0}_{\bS^p}
      + \norm[\big]{S \ast f(u)}_{\bS^p}
      + \norm[\big]{S \diamond B(u)}_{\bS^p}
      + \norm[\big]{S \diamond_\mu G(u_-)}_{\bS^p},
  \end{align}
  where
  $\norm[\big]{S(\cdot)u_0}_{\bS^p} \leq \norm{u_0}_{L^p(\Omega;H)}$
  by contractivity of the semigroup $S$; the elementary
  lemma~\ref{lm:dc} and linear growth of $f$ imply
  \begin{align}\notag
    \norm[\big]{S \ast f(u)}_{\bS^p}
    &\leq \norm[\big]{f(u)}_{L^p(\Omega;L^1(0,T;H))}
      \leq C_f \bigl(1+\norm[\big]{u}_{L^p(\Omega;L^1(0,T;H))}\bigr)\\
    &\lesssim_p T C_f \bigl( 1 + \norm[\big]{u}_{\bS^p} \bigr);
    \label{est:u2}
  \end{align}
  similarly, proposition~\ref{prop:sc} yields
  \begin{align}\notag
    \norm[\big]{S \diamond B(u)}_{\bS^p}
    &\lesssim \norm[\big]{B(u)}_{L^p(\Omega;L^2(0,T;\cL^2(K,H)))}
      \leq C_B \bigl(1+\norm[\big]{u}_{L^p(\Omega;L^2(0,T;H))}\bigr)\\
    &\lesssim_p T^{1/2} C_B \bigl( 1 + \norm[\big]{u}_{\bS^p} \bigr);
    \label{est:u3}
  \end{align}
  finally, it follows by proposition~\ref{prop:scs} that
  $\norm[\big]{S \diamond_\mu G(u_-)}^p_{\bS^p} \lesssim
  \norm[\big]{G(u_-)}^p_{\bG^p}$, where, if
  $p \in \mathopen ]0,1] \cup [2,\infty[$,
  \begin{align}\notag
    \norm[\big]{G(u_-)}^p_{\bG^p}
    &= 1_{\{p>1\}} \E\int \norm{G(u_-)}^p\,d\nu
      + \E\biggl(\int \norm{G(u_-)}^2\,d\nu\biggr)^{p/2}\\ \notag
    &\leq 1_{\{p>1\}} \E\int g^p (1+\norm{u_-}^p)\,d\nu
      + \E\biggl(\int g^2 (1+\norm{u_-}^2)\,d\nu\biggr)^{p/2}\\
    &\lesssim \kappa^p(T) (1+\norm[\big]{u}^p_{\bS^p}),
    \label{est:u4}
  \end{align}
  and, similarly, if $p \in \mathopen]1,2\mathclose[$,
  \begin{align}\notag
    \norm[\big]{G(u_-)}^p_{\bG^p}&=
    \displaystyle \inf_{\tilde g_1+\tilde g_2=G(u_-)} \bigl( %
    \norm[\big]{\tilde g_2}^p_{L^p(\Omega;L^p(\nu;H))}+
    \norm[\big]{\tilde g_1}^p_{L^p(\Omega;L^2(\nu;H))} \bigr)\\ \notag
    &\leq \E\int \norm{G_1(u_-)}^p\,d\nu
      + \E\biggl(\int \norm{G_2(u_-)}^2\,d\nu\biggr)^{p/2}\\ \notag
    &\leq \E\int g_1^p (1+\norm{u_-}^p)\,d\nu
      + \E\biggl(\int g_2^2 (1+\norm{u_-}^2)\,d\nu\biggr)^{p/2}\\
    &\lesssim \kappa^p(T)(1+ \norm[\big]{u}^p_{\bS^p}).
    \label{est:u5}
  \end{align}
  Analogous arguments show that that $\Gamma(u_0,\cdot)$ is a
  contraction of $\bS^p(0,T_0)$, with $T_0$ to be chosen later. In
  fact, one has, with a slightly simplified notation,
  \begin{align*}
    \norm[\big]{\Gamma u - \Gamma v}^{1 \wedge p}_{\bS^p}
    &\leq
      \norm[\big]{S \ast (f(u)-f(v))}_{\bS^p}^{1 \wedge p}
      + \norm[\big]{S \diamond (B(u)-B(v))}_{\bS^p}^{1 \wedge p}\\
    &\quad + \norm[\big]{S \diamond_\mu (G(u_-)-G(v_-))}_{\bS^p}^{1 \wedge p}\\
    &=:A_1+A_2+A_3.
  \end{align*}
  Let us estimate the three terms separately. The Lipschitz continuity
  of $f$, $B$, and $G$ yields
  \begin{align*}
    \norm[\big]{S \ast (f(u)-f(v))}_{\bS^p(0,T_0)}
    &\leq \norm[\big]{f(u)-f(v)}_{L^p(\Omega;L^1(0,T_0;H))}\\
    &\leq T_0 C_f \norm[\big]{u-v}_{\bS^p(0,T_0)},\\
    \norm[\big]{S \diamond (B(u)-B(v))}_{\bS^p(0,T_0)}
    &\lesssim \norm[\big]{B(u)-B(v)}_{L^p(\Omega;L^2(0,T_0;\cL^2(K,H)))}\\
    &\leq T_0^{1/2} C_B \norm[\big]{u-v}_{\bS^p(0,T_0)},\\
    \norm[\big]{S \diamond_\mu (G(u_-)-G(v_-))}_{\bS^p(0,T_0)}
    &\lesssim \norm[\big]{G(u_-)-G(v_-)}_{\bG^p(0,T_0)}\\
    &\lesssim \kappa(T_0) \norm[\big]{u-v}_{\bS^p(0,T_0)},
  \end{align*}
  so that
  \begin{align*}
    A_1 &= \norm[\big]{S \ast (f(u)-f(v))}_{\bS^p(0,T_0)}^{1 \wedge p}
          \lesssim (T_0C_f)^{1 \wedge p} \norm[\big]{u-v}_{\bS^p(0,T_0)}^{1 \wedge p},\\
    A_2 &= \norm[\big]{S \diamond (B(u)-B(v))}_{\bS^p(0,T_0)}^{1 \wedge p}
          \lesssim \bigl( T_0^{1/2}C_B \bigr)^{1 \wedge p}
          \norm[\big]{u-v}_{\bS^p(0,T_0)}^{1 \wedge p},\\
    A_3 &= \norm[\big]{S \diamond (G(u_-)-G(v_-))}_{\bS^p(0,T_0)}^{1 \wedge p}
          \lesssim \kappa(T_0)^{1 \wedge p}
          \norm[\big]{u-v}_{\bS^p(0,T_0)}^{1 \wedge p}.
  \end{align*}
  Since $\kappa$ is continuous with $\kappa(0)=0$, it follows that
  there exists $T_0>0$ and a constant
  $\eta \in \mathopen ]0,1\mathclose[$, which depends on $T_0$, such that
  \[
    d_{p,0,T_0}(\Gamma u,\Gamma v) \leq \eta d_{p,0,T_0}(u,v),
  \]
  hence, by the Banach-Caccioppoli contraction principle, for any
  $u_0 \in L^p(\Omega;H)$ there exists a fixed point $u$ of the
  contraction $\Gamma(u_0,\cdot)$, which is thus the unique solution
  in $\bS^p(0,T_0)$ to \eqref{eq:1}. Choosing $T_0$ such that
  $T=nT_0$, with $n \in \mathbb{N}$, and repeating the same argument
  on each interval $[kT_0,(k+1)T_0]$, with $k \in \{1,\ldots,n-1\}$, a
  unique solution to \eqref{eq:1} can be constructed on the whole
  interval $[0,T]$. 
  Furthermore, for any $u_0\in L^p(\Omega; H)$, by \eqref{est:u1}-\eqref{est:u5},
  the unique solution $u=\Gamma(u_0,u)\in\bS^p(0,T)$ satisfies
  \[
  \norm{u}^p_{\bS^p(0,T)}\lesssim 1 + \norm{u_0}^p_{L^p(\Omega; H)}
  +(T+T^{1/2}+\kappa^p(T))\norm{u}^p_{\bS^p(0,T)},
  \]
  where the implicit constant is independent of $T$. Hence,
  there is $T_0\in(0,T)$ small enough such that 
  \[
  \norm{u}^p_{\bS^p(0,T_0)}\lesssim 1 + \norm{u_0}^p_{L^p(\Omega; H)}.
  \]
  Performing now a patching argument as above on
  $[0,T_0], \ldots, [(n-1)T_0, T]$ yields the desired estimate
  \[
  \norm{u}^p_{\bS^p(0,T)}\lesssim 1 + \norm{u_0}^p_{L^p(\Omega; H)}.
  \]
  The argument to show the Lipschitz-continuity of 
  $u_0\mapsto u$ is similar: let $u_{01}$, $u_{02} \in L^p(\Omega;H)$, 
  and $u_1$, $u_2 \in \bS^p(0,T)$ be the unique solutions to \eqref{eq:1} with
  initial datum $u_{01}$ and $u_{02}$, respectively.
  Using a patching argument as above, 
  it suffices to show that
  $u_0 \mapsto u$ is Lipschitz continuous on $[0,T_0]$. To this
  purpose, 
   One has
  \begin{align*}
    d_{p,0,T_0}(u_1,u_2)
    &= d_{p,0,T_0}\bigl(\Gamma(u_{01},u_1),\Gamma(u_{02},u_2)\bigr)\\
    &\leq d_{p,0,T_0}\bigl(\Gamma(u_{01},u_1),\Gamma(u_{02},u_1)\bigr)
      + d_{p,0,T_0}\bigl(\Gamma(u_{02},u_1),\Gamma(u_{02},u_2)\bigr)\\
    &\leq \norm[\big]{u_{01}-u_{02}}_{L^p(\Omega;H)}^{1 \wedge p}
      + \eta d_{p,0,T_0}(u_1,u_2),
  \end{align*}
  where $\eta<1$ is a positive constant (that depends on
  $T_0$). Rearranging terms and performing a patching
  argument as above immediately yields the Lipschitz
  continuity of $u_0 \mapsto u$.
\end{proof}

\begin{rmk}
  It immediately follows from the Lipschitz continuity of the solution
  map that one also has, in the same notation used above,
  \[
    \norm[\big]{u_1 - u_2}_{\bS^p} \lesssim
    \norm[\big]{u_{01}-u_{02}}_{L^p(\Omega;H)},
  \]
  with implicit constant depending on $T$ and $p$.
\end{rmk}


\ifbozza\newpage\else\fi
\section{G\^ateaux differentiability of the solution map}
\label{sec:G}
In the previous section we have shown that the solution map
$u_0 \mapsto u$ is Lipschitz continuous from $L^p(\Omega;H)$ to
$\bS^p$. We are now going to show that G\^ateaux differentiability of
the coefficients of \eqref{eq:1} implies G\^ateaux differentiability
of the solution map. For some applications (e.g. to study Kolmogorov
equations associated to stochastic PDEs) it is sufficient to consider
non-random initial data and to consider first-order derivatives as
linear maps from $H$ to $\bS^p$, i.e., roughly speaking, to consider
only non-random directions of differentiability. However, the more
general case of random initial data and random directions of
differentiability considered here as well as in the next sections is
conceptually not more difficult and, apart of being interesting in its
own right because treated at the natural level of generality, it is
necessary to study, for instance, higher-order stability issues of
stochastic models with respect to perturbations of the initial datum.

We shall make the following additional assumption, which is assumed to
hold throughout this section.
\begin{itemize}
\item[(G1)] The maps $f(\omega,t,\cdot)$ and $B(\omega,t,\cdot)$ are
  G\^ateaux differentiable for all
  $(\omega,t) \in \Omega \times [0,T]$, and the maps
  \[
    G(\omega,t,z,\cdot), \quad G_1(\omega,t,z,\cdot), \quad
    G_2(\omega,t,z,\cdot)
  \]
  are G\^ateaux differentiable for all
  $(\omega,t,z) \in \Omega \times [0,T] \times Z$.
\end{itemize}
The G\^ateaux derivatives of $f$, $B$ and $G$ (in their $H$-valued
argument) are denoted by
\begin{align*}
  D_{\mathcal G}f: \Omega \times [0,T] \times H &\longrightarrow \cL(H,H),\\
  D_{\mathcal G}B: \Omega \times [0,T] \times H &\longrightarrow \cL(H,\cL^2(K,H)),\\
  D_{\mathcal G}G: \Omega \times [0,T] \times Z \times H &\longrightarrow \cL(H,H).
\end{align*}
Recalling that $f$ and $B$ are Lipschitz continuous in their
$H$-valued argument, uniformly over $\Omega \times [0,T]$, we infer
that
\begin{align*}
  \norm[\big]{D_{\mathcal G}f(\omega,t,x_0)}_{\cL(H,H)}
  &\leq C_f,\\
  \norm[\big]{D_{\mathcal G}B(\omega,t,x_0)}_{\cL(H,\cL^2(K,H))}
  &\leq C_B
\end{align*}
for all $\omega \in \Omega$, $t \in [0,T]$, and $x_0 \in
H$. Similarly, the Lipschitz continuity of $G$ implies, if
$p \not\in \mathopen]1,2\mathclose[$, that
\[
\norm[\big]{D_{\mathcal G}G(\omega,t,z,x_0)}_{\cL(H,H)} \leq g(\omega,t,z),
\]
and, if $p \in \mathopen]1,2\mathclose[$, that
\begin{align*}
\norm[\big]{D_{\mathcal G}G(\omega,t,z,x_0)}_{\cL(H,H)} 
&\leq \norm[\big]{D_{\mathcal G}G_1(\omega,t,z,x_0)}_{\cL(H,H)} 
+ \norm[\big]{D_{\mathcal G}G_2(\omega,t,z,x_0)}_{\cL(H,H)}\\
&\leq g_1(\omega,t,z) + g_2(\omega,t,z),
\end{align*}
for all $\omega \in \Omega$, $t \in [0,T]$, $z \in Z$, and
$x_0 \in H$.

\medskip

We begin with two general results that will be extensively used in the
sequel.
The first lemma is an immediate corollary of the well-posedness
results.
\begin{lemma}
  \label{lm:y}
  Under the assumptions of Theorem~\ref{thm:WP}, let $u \in \bS^p$ be
  the unique mild solution to \eqref{eq:1} with initial condition
  $u_0 \in L^p(\Omega;H)$. For any $h \in L^p(\Omega;H)$, the linear
  stochastic evolution equation
  \begin{equation}
    \label{eq:y}  
    dy + Ay\,dt = D_{\mathcal G}f(u) y\,dt +D_{\mathcal G}B(u) y\,dW +
    \int_{Z}D_{\mathcal G} G(u_-)y_- \,d\bar{\mu}, \qquad y(0)=h,
  \end{equation}
  admits a unique mild solution $y \in \bS^p$ that depends
  continuously on the initial datum $h$.
\end{lemma}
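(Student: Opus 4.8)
The plan is to recognize that \eqref{eq:y} is itself a semilinear stochastic evolution equation of the form \eqref{eq:1}, whose coefficients happen to be \emph{linear} in the unknown, and then to apply Theorem~\ref{thm:WP} verbatim. First I would fix the mild solution $u\in\bS^p$ with initial datum $u_0$ and introduce the coefficients
\[
  \bar f(\omega,t,x):=D_{\mathcal G}f(\omega,t,u(\omega,t))\,x,
  \qquad
  \bar B(\omega,t,x):=D_{\mathcal G}B(\omega,t,u(\omega,t))\,x,
\]
\[
  \bar G(\omega,t,z,x):=D_{\mathcal G}G(\omega,t,z,u(\omega,t-))\,x,
\]
together with $\bar G_j(\omega,t,z,x):=D_{\mathcal G}G_j(\omega,t,z,u(\omega,t-))\,x$, $j\in\{1,2\}$, in the case $p\in\mathopen]1,2\mathclose[$. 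Then \eqref{eq:y} is precisely equation \eqref{eq:1} for the coefficients $\bar f,\bar B,\bar G$ and the initial datum $h\in L^p(\Omega;H)$.

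The next step is to verify that $\bar f,\bar B,\bar G$ satisfy the standing assumptions (A2)--(A4) as well as (A5${}_p$). Lipschitz continuity in the $H$-variable is immediate from linearity and the operator-norm bounds on the G\^ateaux derivatives recorded just before the statement: for instance $\norm{\bar f(\omega,t,x)-\bar f(\omega,t,y)}=\norm{D_{\mathcal G}f(\omega,t,u(\omega,t))(x-y)}\le C_f\norm{x-y}$, and analogously for $\bar B$ (with constant $C_B$), $\bar G$ (with $g$), and $\bar G_j$ (with $g_j$). Since $\bar f,\bar B,\bar G,\bar G_j$ all vanish at $x=0$, the same bounds yield linear growth (for instance $\norm{\bar f(\omega,t,x)}\le C_f\norm{x}\le C_f(1+\norm{x})$); hence (A2)--(A4) hold with the \emph{same} functions $g$, $g_1$, $g_2$, so (A5${}_p$) is inherited unchanged, and (A0) is trivially satisfied.

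The hard part, and really the only point that is not automatic, will be the measurability and adaptedness of these new coefficients, because the G\^ateaux derivatives need not be continuous in $x$. I would handle it by noting that, for each fixed $k\in H$, the map $(\omega,t,x)\mapsto D_{\mathcal G}f(\omega,t,x)k$ is the pointwise limit, as $\eps\to 0$ through a fixed sequence, of the difference quotients $\eps^{-1}\bigl(f(\omega,t,x+\eps k)-f(\omega,t,x)\bigr)$, each of which is jointly measurable (since $f$ is measurable in $(\omega,t)$ and continuous in $x$) and adapted in $(\omega,t)$; hence so is the limit. Composing with the adapted process $u$ (and with the predictable process $u_-$ for the terms involving $\bar G$, $\bar G_j$), and exploiting the separability of $H$ to reduce strong measurability of the operator-valued maps to countably many test vectors $k$, one obtains that $\bar f(\cdot,\cdot,x)$ and $\bar B(\cdot,\cdot,x)$ are measurable and adapted and that $\bar G(\cdot,\cdot,\cdot,x)$, $\bar G_j(\cdot,\cdot,\cdot,x)$ are $\mathscr P\otimes\mathscr Z$-measurable.

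With (A0)--(A5${}_p$) in force for $(\bar f,\bar B,\bar G)$, Theorem~\ref{thm:WP} applied to \eqref{eq:y} immediately delivers a unique mild solution $y\in\bS^p$, the bound $\norm{y}_{\bS^p}\lesssim 1+\norm{h}_{L^p(\Omega;H)}$, and the Lipschitz continuity of $h\mapsto y$ from $L^p(\Omega;H)$ to $\bS^p$, which in particular yields the asserted continuous dependence on the initial datum. Since the equation is linear and $y=0$ when $h=0$, this Lipschitz continuity also gives the sharper estimate $\norm{y}_{\bS^p}\lesssim\norm{h}_{L^p(\Omega;H)}$, although it is not needed here.
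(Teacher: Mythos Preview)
Your proposal is correct and follows essentially the same route as the paper: one observes that the linear coefficients $D_{\mathcal G}f(u)$, $D_{\mathcal G}B(u)$, $D_{\mathcal G}G(u_-)$ inherit the Lipschitz and growth bounds of (A2)--(A4) with the same $g,g_1,g_2$, so that (A5${}_p$) carries over and Theorem~\ref{thm:WP} applies directly. Your treatment is in fact more thorough, as you spell out the measurability/adaptedness verification that the paper leaves implicit.
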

\begin{proof}
  The linear maps $D_{\mathcal G}f(u)$ and $D_{\mathcal G}B(u)$ are bounded, uniformly over
  $\Omega \times [0,T]$, hence, a fortiori, Lipschitz
  continuous. Analogously, the linear map $D_{\mathcal G}G(u_-)$ has norm (and, a
  fortiori, Lipschitz constant) bounded by $g_1+g_2$ (with
  $g_1:=g_2:=g/2$ if $p \not\in \mathopen]1,2\mathclose[$) on
  $\Omega \times [0,T] \times Z$.  Theorem \ref{thm:WP} thus implies
  that, for any $h \in L^p(\Omega;H)$, \eqref{eq:y} admits a unique
  mild solution $y \in \bS^p$, which depends continuously on $h$.
\end{proof}

Note that, since the equation for $y$ is linear, it is immediate that
the map $h \mapsto y$ is linear and continuous from $L^p(\Omega;H)$ to
$\bS^p$.

\medskip

The next lemma will play a crucial role both in the proof of the
G\^ateaux differentiability of the solution map in this section, as
well as in the proof of its Fr\'echet differentiability in the next
section, taking into account Lemma~\ref{lm:Frech}.
\begin{lemma}
  \label{lm:fond}
  Under the assumptions of Theorem~\ref{thm:WP}, let
  $h \in L^p(\Omega;H)$ and $u$, $u_\varepsilon \in \bS^p$ the the
  unique mild solutions to \eqref{eq:1} with initial conditions $u_0$
  and $u_0+\varepsilon h$, respectively. Moreover, let $y \in \bS^p$
  be the unique mild solution to \eqref{eq:y} with initial condition
  $h$. One has
  \begin{align*}
    \norm[\big]{\varepsilon^{-1} (u_\varepsilon - u
    - \varepsilon y)}_{\bS^p}
    &\lesssim_p 
    \norm[\big]{\varepsilon^{-1}\bigl(
      f(u + \varepsilon y) - f(u)
      - \varepsilon D_{\mathcal G}f(u)y \bigr)}_{L^p(\Omega;L^1(0,T;H))}\\
    &\qquad + \norm[\big]{\varepsilon^{-1}
      \bigl(B(u + \varepsilon y) - B(u) 
      - \varepsilon D_{\mathcal G}B(u)y \bigr)}_{L^p(\Omega;L^2(0,T;\cL^2(K,H)))}\\
    &\qquad + \norm[\big]{ \varepsilon^{-1} \bigl(%
      G(u_- + \varepsilon y_-) - G(u_-)
      - \varepsilon D_{\mathcal G}G(u_-)y_- \bigr)}_{\bG^p}
  \end{align*}
\end{lemma}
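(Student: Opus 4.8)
The plan is to exploit the fact that $u_\varepsilon - u - \varepsilon y$ is itself the mild solution of a linear equation with the same structure as \eqref{eq:y}, but with an inhomogeneous forcing built out of the ``differentiability remainders'' of $f$, $B$, and $G$. Concretely, writing the mild formulations of $u_\varepsilon$, $u$, and $y$ and subtracting, one gets
\[
  w_\varepsilon := u_\varepsilon - u - \varepsilon y
  = S\ast R^f + S\diamond R^B + S\diamond_\mu R^G,
\]
where, for instance, $R^f = f(u_\varepsilon) - f(u) - \varepsilon D_{\mathcal G}f(u)y$, and similarly for $R^B$, $R^G$. The key algebraic step is to split each remainder into a ``derivative'' part and a ``Taylor'' part: for $f$,
\[
  f(u_\varepsilon) - f(u) - \varepsilon D_{\mathcal G}f(u)y
  = \bigl( f(u_\varepsilon) - f(u+\varepsilon y) \bigr)
  + \bigl( f(u+\varepsilon y) - f(u) - \varepsilon D_{\mathcal G}f(u) y \bigr),
\]
and the first bracket is controlled by the Lipschitz constant $C_f$ times $\norm{u_\varepsilon - u - \varepsilon y} = \norm{w_\varepsilon}$, while the second bracket is exactly the term appearing on the right-hand side of the claimed inequality. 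The same decomposition applies verbatim to $B$ (with $C_B$) and to $G$ (with $g$, or $g_1+g_2$ in the range $p\in\mathopen]1,2\mathclose[$, using the $G_1$, $G_2$ splitting from (A4)).

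Next I would apply the three convolution estimates — Lemma~\ref{lm:dc} for $S\ast R^f$, Proposition~\ref{prop:sc} for $S\diamond R^B$, and Lemma~\ref{prop:scs} for $S\diamond_\mu R^G$ — exactly as in the proof of Theorem~\ref{thm:WP}. Combining these with the decomposition above and the reverse/ordinary triangle inequality in $\bS^p$ (taking $1\wedge p$ powers when $p<1$), one obtains, after dividing by $\varepsilon$,
\[
  \norm[\big]{\varepsilon^{-1} w_\varepsilon}_{\bS^p}^{1\wedge p}
  \lesssim_p \Phi(\varepsilon)^{1\wedge p}
  + \bigl( T C_f + T^{1/2} C_B + \kappa(T) \bigr)^{1\wedge p}
  \norm[\big]{\varepsilon^{-1} w_\varepsilon}_{\bS^p}^{1\wedge p},
\]
where $\Phi(\varepsilon)$ denotes the sum of the three remainder norms on the right-hand side of the statement. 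As in the well-posedness proof, one runs this first on a small interval $[0,T_0]$ where the coefficient of $\norm{\varepsilon^{-1}w_\varepsilon}$ is $<1$, absorbs that term into the left-hand side, and then patches over $[0,T_0],\dots,[(n-1)T_0,T]$; the only change in the patching step is that on later intervals $w_\varepsilon$ does not vanish at the left endpoint, but its value there is dominated by $\norm{\varepsilon^{-1}w_\varepsilon}_{\bS^p([0,kT_0])}$ via contractivity of $S$, which is already being controlled inductively.

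The main obstacle — really the only point requiring care — is bookkeeping in the quasi-Banach regime $p\in\mathopen]0,1\mathclose[$ and the mixed-norm space $\bG^p$ for $p\in\mathopen]1,2\mathclose[$: one must consistently use $1\wedge p$ powers and the inequality $\abs{x+y}^p\le\abs{x}^p+\abs{y}^p$, and, when splitting $G(u_-+\varepsilon y_-) - G(u_-) - \varepsilon D_{\mathcal G}G(u_-)y_-$, use the decomposition $G=G_1+G_2$ from (A4)(ii) to keep the Lipschitz-type term in the correct component of the sum norm defining $\bG^p$. With those conventions fixed, every estimate is a direct transcription of the ones already carried out in the proof of Theorem~\ref{thm:WP}, so no genuinely new analytic difficulty arises.
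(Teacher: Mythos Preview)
Your proposal is correct and follows essentially the same approach as the paper: the same algebraic split of each remainder into a Lipschitz part (controlled by $\norm{w_\varepsilon}$) and a Taylor part, the same convolution estimates, and the same absorb-on-a-small-interval-then-patch argument. The paper makes explicit two bookkeeping points you leave implicit---the flow property allowing one to rewrite the mild formulation on each subinterval starting from the value at its left endpoint, and the observation that the Taylor-remainder norms on subintervals are each bounded by the corresponding norm on $[0,T]$ (so their finite sum is as well)---but these are routine and your sketch clearly anticipates them.
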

\begin{proof}
  Let $[t_0,t_1] \subset [0,T]$, and consider the evolution equation
  \[
    dv + Av\,dt = f(v)\,dt + B(v)\,dW + G(v_-)\,d\bar\mu, \qquad
    v(t_0)=u(t_0).
  \]
  One easily sees that it admits a unique mild solution $v$, which
  coincides with the restriction of $u$ to $[t_0,t_1]$.  In particular,
  for any $t \geq t_0$,
  \begin{equation}
    \label{eq:storta}
    \begin{split}
      u(t) &= S(t-t_0)u(t_0) + \int_{t_0}^t S(t-s)f(u(s))\,ds
      + \int_{t_0}^t S(t-s)B(u(s))\,dW(s)\\
      &\quad + \int_{t_0}^t\!\!\int_Z S(t-s)G(z,u(s-))\,\bar\mu(dz,ds).
       \end{split}
     \end{equation}
  A completly analogous flow property holds for $u_\varepsilon$ and
  $y$. Then one has, by the triangle inequality,
  \begin{align*}
    \norm[\big]{\varepsilon^{-1}(u_\varepsilon - u - \varepsilon
    y)}_{\bS^p(t_0,t_1)} 
    &\lesssim_p \norm[\big]{\varepsilon^{-1}(u_\varepsilon(t_0) - u(t_0)%
      - \varepsilon y(t_0))}_{L^p(\Omega;H)}\\
    &\qquad + \norm[\big]{S \ast \varepsilon^{-1}\bigl(
      f(u_\varepsilon) - f(u)
      - \varepsilon D_{\mathcal G}f(u)y \bigr)}_{\bS^p(t_0,t_1)}\\
    &\qquad + \norm[\big]{S \diamond \varepsilon^{-1}
      \bigl(B(u_\varepsilon) - B(u) - \varepsilon D_{\mathcal G}B(u)y \bigr)}_{\bS^p(t_0,t_1)}\\
    &\qquad + \norm[\big]{S \diamond_\mu \varepsilon^{-1} \bigl(%
      G(u_{\varepsilon-}) - G(u_-)
      - \varepsilon D_{\mathcal G}G(u_-)y_- \bigr)}_{\bS^p(t_0,t_1)}\\
    &=: I_0 + I_1 + I_2 + I_3,
  \end{align*}
  where, by abuse of notation, the (deterministic and stochastic)
  convolutions are defined on $[t_0,t_1]$, in accordance to
  \eqref{eq:storta}, and $u_{\varepsilon-}:=(u_\varepsilon)_-$. We are
  going to estimate $I_1$, $I_2$ and $I_3$ separately. To simplify the
  notation, let us set, for a generic mapping $\phi$,
  \[
    [Q_{1,\varepsilon}\phi](u) := \frac{\phi(u_\varepsilon) - \phi(u +
      \varepsilon y)}{\varepsilon}, \qquad [Q_{2,\varepsilon}\phi](u)
    := \frac{\phi(u + \varepsilon y) - \phi(u)}{\varepsilon}
  \]
  (with obvious modifications if $u$ and $y$ are replaced by $u_-$ and $y_-$),
  and note that
  \[
    \frac{\phi(u_\varepsilon) - \phi(u)}{\varepsilon} =
    [Q_{1,\varepsilon}\phi](u) + [Q_{2,\varepsilon}\phi](u)
  \]
  (the formal operators $Q_{1,\varepsilon}$ and $Q_{2,\varepsilon}$
  clearly depend also on $y$, but we do not need to explicitly denote
  this fact).
  Recalling the elementary estimate of Lemma~\ref{lm:dc}, one has
  \begin{align*}
    I_1
    &\lesssim_p \norm[\big]{S \ast Q_{1,\varepsilon}f(u)}_{\bS^p(t_0,t_1)}
      + \norm[\big]{%
      S \ast \bigl(Q_{2,\varepsilon}f(u) - D_{\mathcal G}f(u)y \bigr)}_{\bS^p(t_0,t_1)}\\
    &\leq \norm[\big]{Q_{1,\varepsilon}f(u)}_{L^p(\Omega;L^1(t_0,t_1;H))}
      + \norm[\big]{Q_{2,\varepsilon}f(u) - D_{\mathcal G}f(u)y}_{L^p(\Omega;L^1(t_0,t_1;H))}\\
    &=: I_{11} + I_{12},
  \end{align*}
  where, by the Lipschitz continuity of $f$,
  \[
    I_{11} = \norm[\big]{Q_{1,\varepsilon}f(u)}_{L^p(\Omega;L^1(t_0,t_1;H))}
    \lesssim (t_1-t_0) \norm[\big]{%
      \varepsilon^{-1}(u_\varepsilon - u - \varepsilon y)}_{\bS^p(t_0,t_1)}.
  \]
  The terms $I_2$ and $I_3$ can be handled similarly, thanks to the
  maximal inequalities of {\S}\ref{ssec:maxi}:
  \begin{align*}
    I_2 
    &\lesssim_p \norm[\big]{S \diamond Q_{1,\varepsilon} B(u)}_{\bS^p(t_0,t_1)} +
      \norm[\big]{%
      S \diamond \bigl(Q_{2,\varepsilon}B(u) - D_{\mathcal G}B(u)y \bigr)}_{\bS^p(t_0,t_1)}\\
    &\lesssim_p \norm[\big]{Q_{1,\varepsilon} B(u)}_{L^p(\Omega;L^2(t_0,t_1;\cL^2(K,H)))}
      + \norm[\big]{Q_{2,\varepsilon}B(u) - D_{\mathcal G}B(u)y}_{L^p(\Omega;L^2(t_0,t_1;\cL^2(K,H)))}\\
    &=: I_{21} + I_{22},
  \end{align*}
  where
  \[
    I_{21} = \norm[\big]{%
      Q_{1,\varepsilon}B(u)}_{L^p(\Omega;L^2(t_0,t_1;\cL^2(K,H)))}
    \lesssim (t_1-t_0)^{1/2} \norm[\big]{%
      \varepsilon^{-1}(u_\varepsilon - u - \varepsilon y)}_{\bS^p(t_0,t_1)},
  \]
  and
  \begin{align*}
    I_3 
    &\lesssim_p \norm[\big]{S \diamond_\mu
      Q_{1,\varepsilon} G(u_-)}_{\bS^p(t_0,t_1)}
      + \norm[\big]{S \diamond_\mu
      \bigl(Q_{2,\varepsilon} G(u_-) - D_{\mathcal G}G(u_-)y_-\bigr)}_{\bS^p(t_0,t_1)}\\
    &\lesssim_p \norm[\big]{Q_{1,\varepsilon} G(u_-)}_{\bG^p(t_0,t_1)}
      + \norm[\big]{Q_{2,\varepsilon} G(u_-) - D_{\mathcal G}G(u_-)y_-}_{\bG^p(t_0,t_1)}\\
    &=: I_{31} + I_{32},
  \end{align*}
  where
  \[
    I_{31} \leq \kappa(t_1-t_0) \norm[\big]{\varepsilon^{-1}(u_\varepsilon
      - u - \varepsilon y)}_{\bS^p(t_0,t_1)}.
  \]
  Recalling that $\kappa$ is continuous with $\kappa(0)=0$,
  these estimates imply that 
  for every $\sigma>0$
  there exists $\delta>0$ such that, for
  any $t_0<t_1$ with $t_1-t_0 < \delta$, one has
  \begin{align*}
    \norm[\big]{%
    \varepsilon^{-1}(u_\varepsilon - u - \varepsilon
    y)}_{\bS^p(t_0,t_1)} 
    &\lesssim \sigma \norm[\big]{\varepsilon^{-1}(u_\varepsilon
      - u - \varepsilon y)}_{\bS^p(t_0,t_1)}\\
    &\quad+\norm[\big]{\varepsilon^{-1}(u_\varepsilon(t_0) - u(t_0)%
      - \varepsilon y(t_0))}_{L^p(\Omega;H)}\\
    &\quad + I_{12} + I_{22} + I_{32}.
  \end{align*}
  Fixing then $\sigma$ sufficiently small and rearranging the terms yields
  \begin{align*}
    \norm[\big]{%
    \varepsilon^{-1}(u_\varepsilon - u - \varepsilon
    y)}_{\bS^p(t_0,t_1)} 
    &\lesssim \norm[\big]{\varepsilon^{-1}(u_\varepsilon(t_0) - u(t_0)%
      - \varepsilon y(t_0))}_{L^p(\Omega;H)}\\
    &\quad + I_{12} + I_{22} + I_{32}.
  \end{align*}
  where the implicit constant depends on $\delta$ and $I_{12}$,
  $I_{22}$, $I_{32}$ are ``supported'' on $[t_0,t_1]$. Let
  $t_0=0<t_1<\cdots<t_{N-1}<t_N=T$ be a subdivision of the interval
  $[0,T]$ such that $t_{n}-t_{n-1}<\delta$ for all $n$. Then we have,
  for every $n \in \{1,\ldots,N\}$, with obvious meaning of the
  notation,
  \begin{align*}
    \norm[\big]{%
    \varepsilon^{-1}(u_\varepsilon - u - \varepsilon
    y)}_{\bS^p(t_{n-1},t_{n})} 
    &\lesssim \norm[\big]{\varepsilon^{-1}(u_\varepsilon(t_{n-1}) - u(t_{n-1})
      - \varepsilon y(t_{n-1}))}_{L^p(\Omega;H)}\\
    &\quad + [I_{12} + I_{22} + I_{32}](t_{n-1},t_n),
  \end{align*}
  where
  \begin{align*}
    &\norm[\big]{\varepsilon^{-1}(u_\varepsilon(t_{n-1}) - u(t_{n-1})
      - \varepsilon y(t_{n-1}))}_{L^p(\Omega;H)}\\
    &\hspace{3em} \leq \norm[\big]{%
    \varepsilon^{-1}(u_\varepsilon - u - \varepsilon
      y)}_{\bS^p(t_{n-2},t_{n-1})}\\
    &\hspace{3em} \lesssim
      \norm[\big]{\varepsilon^{-1}(u_\varepsilon(t_{n-2}) - u(t_{n-2})
      - \varepsilon y(t_{n-2}))}_{L^p(\Omega;H)}\\
    &\hspace{3em} \quad + [I_{12} + I_{22} + I_{32}](t_{n-2},t_{n-1}).
  \end{align*}
  Backward recursion thus yields
  \begin{align*}
    \norm[\big]{%
    \varepsilon^{-1}(u_\varepsilon - u - \varepsilon y)}_{\bS^p(0,T)}
    &\leq \sum_{n=1}^N \norm[\big]{
      \varepsilon^{-1}(u_\varepsilon - u
      - \varepsilon y)}_{\bS^p(t_{n-1},t_{n})}\\
    &\lesssim \norm[\big]{\varepsilon^{-1}(u_\varepsilon(0) - u(0)
      - \varepsilon y(0))}_{L^p(\Omega;H)}\\
    &\quad + \sum_{n=1}^N [I_{12} + I_{22} + I_{32}](t_{n-1},t_n)
  \end{align*}
  where the first summand on the right-hand side is zero.
  To conclude the proof it suffices to show that
  \[
    \sum_{n=1}^N I_{j2}(t_{n-1},t_n) \lesssim I_{j2}(0,T)
  \]
  for every $j \in \{1,2,3\}$. We shall show that this is true for
  $I_{32}$, as both other cases are entirely similar (in fact slightly
  simpler): it is enough to observe that, for any $\phi$ satisfying
  suitable measurability conditions and for any $q>0$, the
  obvious inequality
  \[
    \int_{Z \times [t_{n-1},t_n]} \abs{\phi}^q \,d\nu \leq
    \int_{Z \times [0,T]} \abs{\phi}^q \,d\nu
    \qquad \forall n \in \{1,\ldots,N\},
  \]
  implies $\norm{\phi}_{\bG^p(t_{n-1},t)} \leq \norm{\phi}_{\bG^p(0,T)}$, hence
  \[
    \sum_{n=1}^N I_{32}(t_{n-1},t_n) \leq N I_{32}(0,T).
    \qedhere
  \]
\end{proof}

The main result of this section is the following. Note that, since the
(standard) definition of G\^ateaux derivative requires a
Banach space framework, we shall confine ourself to the case
$p \in \mathopen[1,+\infty\mathclose[$.

\begin{thm}
  \label{thm:G1}
  Let $p\geq 1$ and \emph{(A5${}_p$)} be satisfied. Then
  the solution map of \eqref{eq:1} is G\^ateaux differentiable
  from $L^p(\Omega;H)$ to $\bS^p$, and its G\^ateaux derivative at
  $u_0$ is $(h \mapsto y) \in \cL(L^p(\Omega;H),\bS^p)$, where $y$ is
  the unique mild solution to \eqref{eq:y}.
\end{thm}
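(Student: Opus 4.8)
The plan is to deduce the statement from Lemma~\ref{lm:fond} by three applications of the dominated convergence theorem. By Lemma~\ref{lm:y}, for every $h \in L^p(\Omega;H)$ the linearised equation \eqref{eq:y} has a unique solution $y \in \bS^p$, and the map $h \mapsto y$ is linear and bounded from $L^p(\Omega;H)$ into $\bS^p$; hence, to conclude that the solution map is G\^ateaux differentiable at $u_0$ with derivative $h\mapsto y$, it suffices to show that $\varepsilon^{-1}(u_\varepsilon - u - \varepsilon y) \to 0$ in $\bS^p$ as $\varepsilon \to 0$, where $u_\varepsilon$ denotes the mild solution with initial datum $u_0 + \varepsilon h$. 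By Lemma~\ref{lm:fond}, this reduces to proving that each of the three quantities
\begin{align*}
  &\norm[\big]{\varepsilon^{-1}\bigl(f(u+\varepsilon y) - f(u) - \varepsilon D_{\mathcal G}f(u)y\bigr)}_{L^p(\Omega;L^1(0,T;H))},\\
  &\norm[\big]{\varepsilon^{-1}\bigl(B(u+\varepsilon y) - B(u) - \varepsilon D_{\mathcal G}B(u)y\bigr)}_{L^p(\Omega;L^2(0,T;\cL^2(K,H)))},\\
  &\norm[\big]{\varepsilon^{-1}\bigl(G(u_-+\varepsilon y_-) - G(u_-) - \varepsilon D_{\mathcal G}G(u_-)y_-\bigr)}_{\bG^p}
\end{align*}
vanishes as $\varepsilon \to 0$. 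In each case the pointwise convergence of the integrand to zero is nothing but the definition of the G\^ateaux derivative of $f(\omega,t,\cdot)$, $B(\omega,t,\cdot)$, $G(\omega,t,z,\cdot)$ (granted by~(G1)) evaluated at the point $u$ (resp.\ $u_-$) in the direction $y$ (resp.\ $y_-$), for every $(\omega,t)$ (resp.\ $(\omega,t,z)$); the only work left is to produce an $\varepsilon$-independent dominating function and invoke dominated convergence along an arbitrary sequence $\varepsilon_n \to 0$.

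For the first term, the Lipschitz continuity of $f$ and the bound $\norm{D_{\mathcal G}f(\omega,t,x_0)}_{\cL(H,H)} \leq C_f$ yield the $\varepsilon$-uniform majorisation $\norm[\big]{\varepsilon^{-1}(f(u+\varepsilon y)-f(u)) - D_{\mathcal G}f(u)y} \leq 2C_f\norm{y}$, and $2C_f\norm{y} \in L^p(\Omega;L^1(0,T;H))$ because $y \in \bS^p$; dominated convergence then gives the claim. The second term is handled identically, replacing $C_f$ by $C_B$, $\norm{\cdot}$ by $\norm{\cdot}_{\cL^2(K,H)}$ and the dominating function by $2C_B\norm{y} \in L^p(\Omega;L^2(0,T;\cL^2(K,H)))$.

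The third term is the one requiring care, and I expect it to be the main obstacle, because of the mixed-norm structure of $\bG^p$. If $p \notin \mathopen]1,2\mathclose[$, set $r_\varepsilon := \varepsilon^{-1}\bigl(G(u_-+\varepsilon y_-) - G(u_-)\bigr) - D_{\mathcal G}G(u_-)y_-$; the $g$-Lipschitz continuity of $G$ and the bound $\norm{D_{\mathcal G}G(\omega,t,z,x_0)}_{\cL(H,H)} \leq g(\omega,t,z)$ give $\norm{r_\varepsilon} \leq 2g\,\norm{y_-} \leq 2g\sup_{t}\norm{y(t)}$, which by~(A5${}_p$) (with $g_1=g_2=g/2$) lies in $L^2(\nu;H)$ $\P$-a.s.\ and, when $p \geq 2$, also in $L^p(\nu;H)$ $\P$-a.s., with norms that are $p$-integrable over $\Omega$ since $y \in \bS^p$. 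Applying dominated convergence first in $(\mathopen]0,T\mathclose]\times Z,\nu)$ for $\P$-a.e.\ fixed $\omega$ and then in $\Omega$ shows $\norm{r_\varepsilon}_{L^p(\Omega;L^2(\nu;H))} \to 0$ and, for $p\geq 2$, also $\norm{r_\varepsilon}_{L^p(\Omega;L^p(\nu;H))} \to 0$, i.e.\ $\norm{r_\varepsilon}_{\bG^p}\to 0$. If instead $p \in \mathopen]1,2\mathclose[$, I use the splitting $G = G_1 + G_2$ of~(A4)(ii): since then $D_{\mathcal G}G = D_{\mathcal G}G_1 + D_{\mathcal G}G_2$, writing $r_\varepsilon = r_\varepsilon^{(1)} + r_\varepsilon^{(2)}$ with $r_\varepsilon^{(j)} := \varepsilon^{-1}\bigl(G_j(u_-+\varepsilon y_-) - G_j(u_-)\bigr) - D_{\mathcal G}G_j(u_-)y_-$ and testing the infimum defining the $\bG^p$-norm with this decomposition gives $\norm{r_\varepsilon}_{\bG^p} \leq \norm{r_\varepsilon^{(2)}}_{L^p(\Omega;L^2(\nu;H))} + \norm{r_\varepsilon^{(1)}}_{L^p(\Omega;L^p(\nu;H))}$; here one must keep track of the correct pairing of each $G_j$ with its norm. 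The two summands are dominated by $2g_2\sup_t\norm{y(t)}$ and $2g_1\sup_t\norm{y(t)}$ respectively, which are integrable in the relevant norms by~(A5${}_p$) and $y\in\bS^p$, and each tends to zero by the same two-stage dominated convergence argument. Combining the three estimates yields $\varepsilon^{-1}(u_\varepsilon - u - \varepsilon y) \to 0$ in $\bS^p$ as $\varepsilon\to 0$, which, together with the linearity and continuity of $h\mapsto y$ recorded above, is precisely the assertion that the solution map is G\^ateaux differentiable at $u_0$ with derivative $(h\mapsto y)\in\cL(L^p(\Omega;H),\bS^p)$.
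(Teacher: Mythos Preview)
Your proof is correct and follows essentially the same route as the paper: reduce via Lemma~\ref{lm:fond} to the three displayed quantities, obtain pointwise convergence from~(G1), dominate using the Lipschitz bounds on $f$, $B$, $G_j$ together with $\norm{D_{\mathcal G}f}\leq C_f$, $\norm{D_{\mathcal G}B}\leq C_B$, $\norm{D_{\mathcal G}G_j}\leq g_j$, and conclude by dominated convergence (twice for the $\bG^p$ term). Your handling of the $\bG^p$ term is in fact slightly more careful than the paper's in keeping track of which piece $r_\varepsilon^{(j)}$ goes with which mixed norm, but the argument is the same.
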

\begin{proof}
  By Lemma~\ref{lm:fond}, it is enough to show that
  \begin{equation}
    \label{eq:sepsi}
    \begin{split}
      &\norm[\big]{\varepsilon^{-1}\bigl( f(u + \varepsilon y) - f(u)
        - \varepsilon D_{\mathcal G}f(u)y \bigr)}_{L^p(\Omega;L^1(0,T;H))}\\
      &\quad + \norm[\big]{\varepsilon^{-1} \bigl(B(u + \varepsilon y)
        - B(u)
        - \varepsilon D_{\mathcal G}B(u)y \bigr)}_{L^p(\Omega;L^2(0,T;\cL^2(K,H)))}\\
      &\quad + \norm[\big]{ \varepsilon^{-1} \bigl(%
        G(u_- + \varepsilon y_-) - G(u_-) - \varepsilon D_{\mathcal G}G(u_-)y_-
        \bigr)}_{\bG^p}
    \end{split}
  \end{equation}
  converges to zero as $\varepsilon$ tends to zero.
  By assumption (G1) it immediately follows that, as
  $\varepsilon \to 0$,
  \begin{gather*}
  \norm[\big]{\varepsilon^{-1}\bigl( f(u + \varepsilon y) - f(u) 
  - \varepsilon D_{\mathcal G}f(u)y \bigr)}
  \longrightarrow 0,\\
  \norm[\big]{\varepsilon^{-1}
  \bigl(B(u + \varepsilon y) - B(u) 
  - \varepsilon D_{\mathcal G}B(u)y \bigr)}_{\cL^2(K,H)}
  \longrightarrow 0
  \end{gather*}
  for a.a. $(\omega,t) \in \Omega \times [0,T]$. Moreover, recalling
  that the operator norms of $D_{\mathcal G}f$ and $D_{\mathcal G}B$ are bounded by the
  Lipschitz constants of $f$ and $B$, respectively, the triangle
  inequality yields
  \begin{align*}
    &\norm[\big]{\varepsilon^{-1}\bigl( f(u + \varepsilon y) - f(u) 
      - \varepsilon D_{\mathcal G}f(u)y \bigr)}\\
    &\hspace{3em} + \norm[\big]{\varepsilon^{-1}
    \bigl(B(u + \varepsilon y) - B(u) 
      - \varepsilon D_{\mathcal G}B(u)y \bigr)}_{\cL^2(K,H)}
      \lesssim \bigl( C_f + C_B \bigr) \norm{y}
  \end{align*}
  for a.a. $(\omega,t)$. Since $y \in \bS^p$, the right-hand side
  belongs to $L^p(\Omega;L^1(0,T))$ as well as to
  $L^p(\Omega;L^2(0,T))$, hence the first two terms in
  \eqref{eq:sepsi} converge to zero as $\varepsilon \to 0$ by the
  dominated convergence theorem.
  Similarly, setting $G_1:=G_2:=G/2$ if $p \geq 2$, one has
  \begin{align*}
    &\norm[\big]{ \varepsilon^{-1} \bigl(%
      G(u_- + \varepsilon y_-) - G(u_-) - \varepsilon D_{\mathcal G}G(u_-)y_-
      \bigr)}_{\bG^p}\\
    &\hspace{3em}\lesssim_p \norm[\big]{ \varepsilon^{-1} \bigl(%
      G_1(u_- + \varepsilon y_-) - G_1(u_-) - \varepsilon D_{\mathcal G}G_1(u_-)y_-
      \bigr)}_{L^p(\Omega;L^2(\nu;H))}\\
    &\hspace{3em}\quad + \norm[\big]{ \varepsilon^{-1} \bigl(%
      G_2(u_- + \varepsilon y_-) - G_2(u_-) - \varepsilon D_{\mathcal G}G_2(u_-)y_-
      \bigr)}_{L^p(\Omega;L^p(\nu;H))},
  \end{align*}
  where the implicit constant is equal to $1$ for
  $p \in [1,2\mathclose[$, and to $2$ for $p \geq 2$.
  Since
  \[
    \norm[\big]{ \varepsilon^{-1} \bigl(%
      G_j(u_- + \varepsilon y_-) - G_j(u_-) - \varepsilon D_{\mathcal G}G_j(u_-)y_-
      \bigr)} \longrightarrow 0
  \]
  as $\varepsilon \to 0$, as well as 
  \[
    \norm[\big]{ \varepsilon^{-1} \bigl(%
      G_j(u_- + \varepsilon y_-) - G_j(u_-) - \varepsilon D_{\mathcal G}G_j(u_-)y_-
      \bigr)} \leq g_j \norm{y}
  \]
  for all $(t,z) \in [0,T] \times Z$, $\P$-almost surely, for both
  $j=1$ and $j=2$, one has, thanks to (A5${}_p$) and the dominated
  convergence theorem, recalling that $y \in \bS^p$,
  \begin{gather*}
    \norm[\big]{ \varepsilon^{-1} \bigl(%
      G_1(u_- + \varepsilon y_-) - G_1(u_-) - \varepsilon D_{\mathcal G}G_1(u_-)y_-
      \bigr)}_{L^2(\nu;H)} \longrightarrow 0,\\
    \norm[\big]{ \varepsilon^{-1} \bigl(%
      G_2(u_- + \varepsilon y_-) - G_2(u_-) - \varepsilon D_{\mathcal G}G_2(u_-)y_-
      \bigr)}_{L^p(\nu;H)} \longrightarrow 0
  \end{gather*}
  $\P$-a.s. as $\varepsilon \to 0$. A further application of the
  dominated convergence theorem hence yields that the third term in
  \eqref{eq:sepsi} converges to zero as $\varepsilon \to 0$, thus
  completing the proof.
\end{proof}


\ifbozza\newpage\else\fi
\section{Fr\'echet differentiability of the solution map}
\label{sec:F}
We are going to show that the Fr\'echet differentiability of the
coefficients of \eqref{eq:1} implies the Fr\'echet differentiability
of the solution map. We shall work under the following assumption,
that is assumed to hold throughout this section.
\begin{itemize}
\item[(F)] The maps $f(\omega,t,\cdot)$ and $B(\omega,t,\cdot)$ are
  Fr\'echet differentiable for all
  $(\omega,t) \in \Omega \times [0,T]$, and the maps
  \[
    G(\omega,t,z,\cdot), \quad G_1(\omega,t,z,\cdot), \quad
    G_2(\omega,t,z,\cdot)
  \]
  are Fr\'echet differentiable for all
  $(\omega,t,z) \in \Omega \times [0,T] \times Z$.
\end{itemize}
The Fr\'echet derivatives of $f$ and $B$ (in their $H$-valued
argument), denoted by
\begin{align*}
  Df: \Omega \times [0,T] \times H &\longrightarrow \cL(H,H),\\
  DB: \Omega \times [0,T] \times H &\longrightarrow \cL(H,\cL^2(K,H)),
\end{align*}
satisfy the boundedness properties
\begin{align*}
  \norm[\big]{Df(\omega,t,x)}_{\cL(H,H)} &\leq C_f,\\
  \norm[\big]{DB(\omega,t,x)}_{\cL(H,\cL^2(K,H))} &\leq C_B
\end{align*}
for all $(\omega,t,x) \in \Omega \times [0,T] \times H$ (see \S~\ref{ssec:der}).
Similarly, and in complete analogy to the previous section, the
Lipschitz continuity assumptions on $G$, $G_1$ and $G_2$ imply that,
\begin{align*}
  \norm[\big]{DG(\omega,t,z,x)}_{\cL(H,H)} &\leq g(\omega,t,z), & &p \geq 2,\\
  \norm[\big]{DG_j(\omega,t,z,x)}_{\cL(H,H)} &\leq g_j(\omega,t,z),
  & &p \in [1,2\mathclose[, \quad j=1,2.
\end{align*}

The main result of this section is the following theorem, which states
that the solution map is Fr\'echet differentiable along subspaces of
vectors with finite higher moments.
\begin{thm}
  \label{thm:F1}
  Let $q > p \geq 1$. If \emph{(A5${}_p$)} and \emph{(A5${}_q$)} hold, then the
  solution map of \eqref{eq:1} is Fr\'echet differentiable from
  $L^p(\Omega;H)$ to $\bS^p$ along $L^q(\Omega;H)$ and its Fr\'echet
  derivative at $u_0\in L^p(\Omega;H)$ is the map
  $h \mapsto y \in \cL(L^q(\Omega;H),\bS^p)$, where $y$ is the unique
  mild solution to the stochastic evolution equation
  \begin{equation}
    \label{eq:y_F}  
    dy + Ay\,dt = Df(u)y\,dt + DB(u)y\,dW +
    \int_Z DG(u_-)y_- \,d\bar{\mu}, \qquad y(0)=h.
  \end{equation}
\end{thm}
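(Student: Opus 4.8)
The plan is to combine Lemma~\ref{lm:fond} with the characterization of Fr\'echet differentiability given in Lemma~\ref{lm:Frech}. Observe first that equation~\eqref{eq:y_F} is exactly equation~\eqref{eq:y}, since Fr\'echet differentiability implies G\^ateaux differentiability with the same derivative, so $Df=D_{\mathcal G}f$, $DB=D_{\mathcal G}B$, $DG=D_{\mathcal G}G$ (and similarly for $G_1$, $G_2$); consequently Lemma~\ref{lm:y} already guarantees that for every $h\in L^q(\Omega;H)\subseteq L^p(\Omega;H)$ there is a unique mild solution $y\in\bS^p$ to \eqref{eq:y_F}, and the map $h\mapsto y$ is linear and continuous from $L^q(\Omega;H)$ to $\bS^p$ (in fact the required bound follows from the well-posedness estimate of Theorem~\ref{thm:WP}). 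By Lemma~\ref{lm:Frech} (and the scaling remark following it), to prove Fr\'echet differentiability along $L^q(\Omega;H)$ it suffices to show that
\[
  \norm[\big]{\varepsilon^{-1}(u_\varepsilon - u - \varepsilon y)}_{\bS^p}
  \longrightarrow 0 \quad\text{as } \varepsilon\to 0,
  \quad\text{uniformly over } h\in B_1(L^q(\Omega;H)),
\]
where $u$, $u_\varepsilon$ solve \eqref{eq:1} with initial data $u_0$ and $u_0+\varepsilon h$, and $y$ solves \eqref{eq:y_F} with $y(0)=h$.

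By Lemma~\ref{lm:fond}, the left-hand side is bounded, up to a constant depending only on $p$, by the sum of the three ``remainder'' terms appearing there. The crux is therefore to show that each of
\[
  \norm[\big]{\varepsilon^{-1}(f(u+\varepsilon y)-f(u)-\varepsilon Df(u)y)}_{L^p(\Omega;L^1(0,T;H))},
\]
the analogous term for $B$ in $L^p(\Omega;L^2(0,T;\cL^2(K,H)))$, and the one for $G$ in $\bG^p$, tends to zero uniformly over $\norm{h}_{L^q(\Omega;H)}\leq 1$. Here one must exploit the extra integrability: since $h\mapsto y$ is bounded from $L^q(\Omega;H)$ to $\bS^p$, and in fact (by Theorem~\ref{thm:WP} applied to the linear equation~\eqref{eq:y_F}, whose coefficients have the same bounds regardless of $q$) bounded from $L^q(\Omega;H)$ to $\bS^q$, the family $\{y : \norm{h}_{L^q}\leq 1\}$ is bounded in $\bS^q$, hence uniformly $p$-integrable as a family in $\bS^p$. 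For the pointwise-in-$(\omega,t,z)$ behaviour I would argue by contradiction: if uniform convergence failed there would be sequences $\varepsilon_n\to0$ and $h_n$ with $\norm{h_n}_{L^q}\leq1$ for which one of the three norms stays bounded away from zero; the Fr\'echet differentiability hypothesis (F) gives, for each fixed $(\omega,t)$ resp.\ $(\omega,t,z)$, that $\varepsilon^{-1}(f(u+\varepsilon y_n)-f(u)-\varepsilon Df(u)y_n)\to0$ whenever $\varepsilon y_n\to0$, with a bound $\lesssim C_f\norm{y_n}$ from Lipschitz continuity of $f$ (and $Df$); combined with the $\bS^q$-boundedness of $(y_n)$ — which forces $\varepsilon_n y_n\to 0$ in $\bS^p$ along a subsequence, and $\P\otimes dt$-a.e.\ along a further subsequence — and a Vitali-type (uniform integrability) argument in place of dominated convergence, one obtains convergence to zero of the $L^p(\Omega;L^1)$ norm, a contradiction. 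The terms involving $B$ and $G$ are handled in exactly the same way, using (A5${}_p$) for the splitting $G=G_1+G_2$ and the majorants $g_j\norm{y}$ in the $\bG^p$-norm; the role of (A5${}_q$) is precisely to ensure that the $\bS^q$-bound on $y$ is available, i.e.\ that Theorem~\ref{thm:WP} with exponent $q$ applies to \eqref{eq:y_F}.

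The main obstacle is the passage to a \emph{uniform} limit over the unit ball of $L^q(\Omega;H)$: pointwise Fr\'echet differentiability of the coefficients together with dominated convergence only yields convergence for each fixed $h$, as in the proof of Theorem~\ref{thm:G1}. The device that upgrades this to uniformity is the gain of integrability $q>p$, which turns the boundedness of $\{y\}$ in $\bS^q$ into uniform $p$-integrability, so that dominated convergence can be replaced by a Vitali argument applied along arbitrary sequences $h_n$ in the ball; making this rigorous — in particular checking that $\varepsilon_n y_n\to0$ in the appropriate a.e.\ sense and that the relevant majorants ($C_f\norm{y_n}$, $C_B\norm{y_n}$, $g_j\norm{y_n}$) are uniformly integrable in the respective spaces $L^p(\Omega;L^1)$, $L^p(\Omega;L^2(0,T;\cL^2))$, $\bG^p$ — is where the real work lies. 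Once the three remainder terms are shown to vanish uniformly, Lemma~\ref{lm:fond} and Lemma~\ref{lm:Frech} immediately give that $u_0\mapsto u$ is Fr\'echet differentiable from $L^p(\Omega;H)$ to $\bS^p$ along $L^q(\Omega;H)$ with derivative $h\mapsto y$, completing the proof.
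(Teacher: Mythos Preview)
Your overall strategy coincides with the paper's: reduce via Lemma~\ref{lm:fond} and Lemma~\ref{lm:Frech} to proving that the three remainder terms in \eqref{eq:sepsi} vanish uniformly over $h$ in the unit ball of $L^q(\Omega;H)$, and exploit the $\bS^q$-boundedness of $\{y(h):\norm{h}_{L^q}\le1\}$ (which is indeed what (A5${}_q$) buys you) together with the resulting uniform $p$-integrability of $\{(y^*_T)^p\}$.

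There is, however, a genuine gap in your subsequence/Vitali step. The assertion that ``Fr\'echet differentiability of $f$ at $u(\omega,t)$ gives $\varepsilon_n^{-1}\bigl(f(u+\varepsilon_n y_n)-f(u)-\varepsilon_n Df(u)y_n\bigr)\to0$ whenever $\varepsilon_n y_n(\omega,t)\to0$'' is false without a bound on $\norm{y_n(\omega,t)}$. Writing $R(k):=f(u+k)-f(u)-Df(u)k$, one has $R(\varepsilon_n y_n)/\varepsilon_n=\norm{y_n}\cdot R(\varepsilon_n y_n)/\norm{\varepsilon_n y_n}$; the second factor tends to zero, but $\norm{y_n(\omega,t)}$ need not be bounded as $n$ varies (the $\bS^q$-bound is only an integral bound). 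A toy example: with $f(x)=x^2$ near the origin and $y_n=\varepsilon_n^{-1/2}$ one has $\varepsilon_n y_n\to0$ yet $R(\varepsilon_n y_n)/\varepsilon_n\equiv1$. Thus the a.e.\ convergence of $\varepsilon_n y_n$ that you extract does \emph{not} yield a.e.\ convergence of the integrand, and the Vitali argument as written does not close.

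The paper avoids this by a direct truncation rather than a contradiction/subsequence argument. For fixed $R>0$ and $y\in B_R(\bS^q)$ one sets $E:=\{y^*_T>n\}$; by Markov, $\P(E)\le R^q/n^q$, so $n$ can be chosen depending only on $R$ (not on the particular $y$) to make the contribution $I_1(E)\lesssim\bigl(\E 1_E(y^*_T)^p\bigr)^{1/p}$ as small as desired via uniform integrability. On $E^c$ one has $y(\omega,t)\in B_n(H)$ for all $t$, and the crucial move is to dominate the integrand by
\[
  \sup_{z\in B_n(H)}\norm[\big]{\varepsilon^{-1}\bigl(f(u(\omega,t)+\varepsilon z)-f(u(\omega,t))\bigr)-Df(u(\omega,t))z},
\]
which depends on $(\omega,t)$ and $\varepsilon$ but \emph{not on $y$}. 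This supremum tends to zero pointwise by the characterization in Lemma~\ref{lm:Frech}, is bounded by $2nC_f$, and hence converges in $L^p(E^c;L^1(0,T))$ by dominated convergence. Since the $\varepsilon$-threshold depends only on $n=n(R)$, uniformity over $y\in B_R(\bS^q)$ follows. The same scheme handles $B$ and $G$. Your Vitali route can be repaired, but only by inserting exactly this truncation at level $n$ to get convergence in measure of the integrand; once you do that, you have essentially rederived the paper's argument.
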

\begin{proof}
  For any $h \in L^q(\Omega; H)$, equation \eqref{eq:y_F} admits a
  unique mild solution $y \in \bS^q$, as it follows immediately by the
  boundedness properties of the Fr\'echet derivatives of $f$, $B$ and
  $G$, and by hypothesis (A5${}_q$).  Therefore the map $h \mapsto y$
  is well defined from $L^q(\Omega; H)$ to $\bS^q$, and it is
  obviously linear and continuous. To prove that this map is the Fr\'echet
  derivative of the solution map $u_0 \mapsto u$, thanks to the
  characterization of Fr\'echet differentiability of
  Lemma~\ref{lm:Frech}, it is enough to show that
  \[
    \lim_{\varepsilon \to 0} \, \norm[\big]{%
      \varepsilon^{-1}(u_\varepsilon - u - \varepsilon y)}_{\bS^p} = 0
  \]
  uniformly over $h$ belonging to bounded subsets of $L^q(\Omega;H)$.
  By Lemma~\ref{lm:fond}, for this it suffices to show that each term
  in \eqref{eq:sepsi} converges to zero uniformly with respect to $h$
  belonging to the unit ball of $L^q(\Omega;H)$.  Since
  $h \mapsto y \in \cL(L^q(\Omega;H),\bS^q)$, it is
  evident that if $h$ belongs to $B_1(L^q(\Omega; H))$
  then $y(h)$ belongs to $B_R(\bS^q)$,
  where $R:=\norm{h \mapsto y}_{\cL(L^q(\Omega; H), \bS^q)}$.
  Hence, denoting by $I_j$, $j=1,2,3$, the terms appearing in
  \eqref{eq:sepsi}, by homogeneity
  \[
    \sup_{h \in B_1(L^q(\Omega; H))} (I_1+I_2+I_3)(y(h)) \leq
    \sup_{y \in B_R(\bS^q)} (I_1+I_2+I_3)(y).
  \]
  Hence it
  suffices to show that $I_1$, $I_2$ and $I_3$ converge to zero
  uniformly with respect to $y$ bounded in $\bS^q$. That is, we need
  to show that, for any $R>0$ and $\vartheta>0$, there exists
  $\varepsilon_0=\varepsilon_0(R,\vartheta)$ such that
  $\abs{\varepsilon}<\varepsilon_0$ implies $I_j(y) < \vartheta$ for
  all $y \in B_R(\bS^q)$ and $j \in \{1,2,3\}$.
  For any measurable $E \subset \Omega$, one clearly has
  \begin{align*}
    I_1 &= \norm[\big]{\varepsilon^{-1}\bigl(f(u+\varepsilon y) - f(u)%
          - \varepsilon Df(u)y\bigr)}_{L^p(\Omega;L^1(0,T;H))}\\
        &\leq \norm[\big]{\varepsilon^{-1}\bigl(f(u+\varepsilon y) - f(u)%
          - \varepsilon Df(u)y\bigr)}_{L^p(E;L^1(0,T;H))}\\
        &\quad + \norm[\big]{\varepsilon^{-1}\bigl(f(u+\varepsilon y) - f(u)%
          - \varepsilon Df(u)y\bigr)}_{L^p(E^c;L^1(0,T;H))}\\
        &=: I_1(E) + I_1(E^c),
  \end{align*}
  where, by the Lipschitz continuity of $f$,
  \begin{align*}
    I_1(E)^p
    &= \E 1_E \biggl( \int_0^T \norm[\big]{%
      (f(u+\varepsilon y)-f(u))/\varepsilon - Df(u)y} \,dt \biggr)^p\\
    &\leq \bigl(2 T C_f\bigr)^p \E 1_E \, (y^*_T)^p.
  \end{align*}
  The set $Y := \{ (y^*_T)^p:\, y \in B_R(\bS^q) \}$ is bounded in
  $L^{q/p}(\Omega)$, with $q>p$, hence uniformly integrable on
  $(\Omega,\cF,\P)$. In particular, for any $\vartheta > 0$ there exists
  $\sigma>0$ such that, for any $E \in \cF$ with $\P(E)<\sigma$, one
  has
  \[
    \E 1_E \, (y^*_T)^p < \biggl( \frac{\vartheta}{2 C_f T}
    \biggr)^p, \qquad \forall y \in B_R(\bS^q),
  \]
  hence $I_1(E) \leq \vartheta$.
  Let $y \in B_R(\bS^q)$ be arbitrary but fixed. Markov's inequality
  yields, for any $n > 0$,
  \[
    \P\bigl(y^*_T > n\bigr) \leq \frac{\E (y^*_T)^q}{n^q}
    \leq \frac{R^q}{n^q}.
  \]
  Therefore there exists $n>0$ such that, setting $E:=\{y^*_T>n\}$,
  one has $I_1(E) < \vartheta$. It is important to note that $n$
  depends on $R$, but not on $y$, while $E$ depends on $y$.
  The Fr\'echet differentiability hypothesis on $f$ amounts to saying
  that, for any $x \in H$ and $n \in \mathbb{N}$,
  \[
    \lim_{\varepsilon \to 0} \sup_{z \in B_n(H)}
    \norm[\big]{(f(x+\varepsilon z) - f(x))/\varepsilon - Df(x)z} = 0.
  \]
  In particular, one has
  \[
    \lim_{\varepsilon \to 0} \sup_{z \in B_n(H)}
    \norm[\big]{(f(u(\omega,t) + \varepsilon z) -
      f(u(\omega,t)))/\varepsilon - Df(u(\omega,t))z} = 0
  \]
  for a.a. $(\omega,t) \in E^c \times [0,T]$, where, by the Lipschitz
  continuity of $f$,
  \[
    \sup_{z \in B_n(H)} \norm[\big]{(f(u(\omega,t) + \varepsilon z) -
      f(u(\omega,t)))/\varepsilon - Df(u(\omega,t))z} \lesssim 2n C_f,
  \]
  for a.a. $(\omega,t) \in E^c \times [0,T]$. Therefore, by the
  dominated convergence theorem,
  \[
    \lim_{\varepsilon \to 0} \norm[\bigg]{%
      \sup_{z \in B_n(H)} \norm[\big]{(f(u(\omega,t) + \varepsilon z)
        - f(u(\omega,t)))/\varepsilon -
        Df(u(\omega,t))z}}_{L^p(E^c;L^1(0,T))} =0,
  \]
  that is, for any $\vartheta > 0$ there exists $\varepsilon_1$
  depending only on $\vartheta$ and $n$ such that
  \[
    \norm[\bigg]{%
      \sup_{z \in B_n(H)} \norm[\big]{(f(u + \varepsilon z) -
        f(u))/\varepsilon - Df(u)z}}_{L^p(E^c;L^1(0,T))} < \vartheta
  \]
  for all $\varepsilon$ such that
  $\abs{\varepsilon}<\varepsilon_1(\vartheta,n)$. It remains to
  observe that
  \begin{align*}
    & \norm[\big]{(f(u(\omega,t) + \varepsilon y(\omega,t)) -
      f(u(\omega,t)))/\varepsilon - Df(u(\omega,t))y(\omega,t)}\\
    &\hspace{3em} \leq
      \sup_{z \in B_n(H)} \norm[\big]{(f(u(\omega,t) + \varepsilon z) -
      f(u(\omega,t)))/\varepsilon - Df(u(\omega,t))z}
  \end{align*}
  for a.a. $(\omega,t) \in E^c \times [0,T]$ to get that
  $I_1(E^c) < \vartheta$ for all $\varepsilon$ such that
  $\abs{\varepsilon}<\varepsilon_1(\vartheta,n)$. Since $n$ depends
  only on $R$, we conclude that there exists
  $\varepsilon_1=\varepsilon_1(\vartheta,R)$ such that
  $I_1 < 2\vartheta$ for all
  $\abs{\varepsilon}<\varepsilon_1(\vartheta,R)$.
  
  Let us now consider the term $I_2$: the argument is similar to the
  one just carried out, so we provide slightly less detail.  We have
  to show that $I_2$ converges to $0$ uniformly with respect to
  $y \in B_R(\bS^q)$.  For any measurable $E \subset \Omega$, one has,
  with obvious meaning of the notation,
  \[
    I_2 \leq I_2(E) + I_2(E^c),
  \]
  where, by the Lipschitz-continuity of $B$,
  \[
    I_2(E)^p \leq (2 C_B)^p T^{p/2} \E 1_E\,(y^*_T)^p.
  \]
  Choosing $E$ as before, using the uniform integrability of the
  family $Y$ combined with the Markov inequality, we infer that for
  any $\vartheta > 0$ there exists $n>0$ such that
  $I_2(E) < \vartheta$.  The Fr\'echet differentiability of $B$
  implies that, for any $x \in H$,
  \[
    \lim_{\varepsilon \to 0} \sup_{z \in B_n(H)}
    \norm[\big]{(B(x+\varepsilon z) - B(x))/\varepsilon -
      DB(x)z}_{\cL^2(K,H)} = 0
  \]
  in $E^c \times [0,T]$, where, by the Lipschitz continuity of $B$,
  \[
    \sup_{z \in B_n(H)} \norm[\big]{(B(u(\omega,t) + \varepsilon z) -
      B(u(\omega,t)))/\varepsilon - DB(u(\omega,t))z}_{\cL^2(K,H)}
    \lesssim 2n C_B,
  \]
  for a.a. $(\omega,t) \in E^c \times [0,T]$. Hence, the dominated
  convergence theorem yields
  \[
    \lim_{\varepsilon \to 0} \norm[\bigg]{%
      \sup_{z \in B_n(H)} \norm[\big]{(B(u(\omega,t) + \varepsilon z)
        - B(u(\omega,t)))/\varepsilon -
        DB(u(\omega,t))z}_{\cL^2(K,H)}}_{L^p(E^c;L^2(0,T))} =0,
  \]
  that is, for any $\vartheta > 0$ there exists $\varepsilon_2$
  depending only on $\vartheta$ and $n$ such that
  \[
    \norm[\bigg]{%
      \sup_{z \in B_n(H)} \norm[\big]{(B(u + \varepsilon z) -
        B(u))/\varepsilon - DB(u)z}_{\cL^2(K,H)}}_{L^p(E^c;L^2(0,T))}
    < \vartheta
  \]
  for all $\varepsilon$ such that
  $\abs{\varepsilon}<\varepsilon_2(\vartheta,n)$, from which also
  $I_2(E^c) < \vartheta$ for all $\varepsilon$ such that
  $\abs{\varepsilon}<\varepsilon_2(\vartheta,n)$. Hence, there exists
  $\varepsilon_2=\varepsilon_2(\vartheta, R)$ such that
  $I_2<2\vartheta$ for all $\varepsilon$ with
  $\abs{\varepsilon} < \varepsilon_2(\vartheta, R)$.
 
  The convergence to zero of $I_3$ as $\varepsilon \to 0$, uniformly
  with respect to $y \in B_R(\bS^q)$, while still similar to the above
  arguments, is slightly more delicate as random measures are
  involved.  As already shown in the proof of Theorem~\ref{thm:G1},
  one has, recalling that Fr\'echet differentiability implies
  G\^ateaux differentiability,
  \[
    \norm[\big]{\varepsilon^{-1}\bigl(G(u_-+\varepsilon y_-)
      - G(u_-)\bigr) - DG(u_-)y_-}_{\bG^p} \longrightarrow 0
  \]
  as $\varepsilon \to 0$. We need to show that the convergence holds
  uniformly over $y$ bounded in $\bS^q$. Let $R>0$ and
  $y \in B_R(\bS^q)$. For any measurable $E \in \cF$, the Lipschitz
  continuity assumptions on $G$ and (A5${}_p$) imply, setting
  $G_1:=G_2:=G$ if $p \geq 2$, that
  \begin{align*}
    &\norm[\big]{1_E \bigl(\varepsilon^{-1}\bigl(G(v+\varepsilon
      w)-G(v)\bigr) - DG(v)w\bigr)}^p_{\bG^p}\\
    &\hspace{3em} \leq \E 1_E \int \norm[\big]{
      \varepsilon^{-1}\bigl(G_1(u_-+\varepsilon y_-) - G_1(u_-)\bigr)
      - DG_1(u_-)y_-}^p\,d\nu\\
    &\hspace{3em} \quad + \E 1_E \biggl( \int \norm[\big]{
      \varepsilon^{-1}\bigl(G_2(u_-+\varepsilon y_-) -
      G_2(u_-)\bigr) - DG_2(u_-)y_-}^2\,d\nu\biggr)^{p/2}\\
    &\hspace{3em} \lesssim_p \kappa(T)^p \E 1_E (y^*_T)^p.
  \end{align*}
  As the set $\{(y^*_T)^p:\, y \in B_R(\bS^q)\}$ is bounded in
  $L^{q/p}(\Omega)$, hence uniformly integrable, for any $\vartheta>0$
  there exists $n>0$ (by Markov's inequality) such that, choosing
  $E:=\{y^*>n\}$ as before, we have
  \[ 
  \kappa(T)^p \E 1_E (y^*_T)^p < \vartheta.
  \]
  On $E^c$ one has, possibly outside a set of $\P$-measure zero, for
  both $j=1$ and $j=2$,
  \begin{align*}
    &\norm[\big]{\varepsilon^{-1}\bigl(G_j(u_-+\varepsilon y_-)
      - G_j(u_-) \bigr) - DG_j(u_-)y_-}\\ 
    &\hspace{3em} \leq \sup_{\xi \in B_n(H)}
    \norm[\big]{\varepsilon^{-1}\bigl(G_j(u_- + \varepsilon\xi) - G_j(u_-)\bigr)
      - DG_j(u_-)\xi}
  \end{align*}
  where the right-hand side converges to zero by the characterization
  of Fr\'echet differentiability of Lemma~\ref{lm:Frech}, and is
  bounded by $2n g_j$ for all $(t,z) \in [0,T] \times Z$. Since
  $g_1 \in L^p(\nu)$ and $g_2 \in L^2(\nu)$ $\P$-a.s. in $E^c$, the
  dominated convergence theorem and (A5${}_p$) yield
  \[
  \norm[\big]{1_{E^c} \bigl(\varepsilon^{-1}\bigl(G(u_-+\varepsilon
    y_-)-G(u_-)\bigr) - DG(u_-)y_-\bigr)}_{\bG^p} \longrightarrow 0
  \]
  as $\varepsilon \to 0$, uniformly with respect to
  $y \in B_R(\bS^q)$. Proceeding exactly as in the case of $I_1$, we
  conclude that there exists
  $\varepsilon_3=\varepsilon_3(\vartheta,R)$ such that
  $I_3<2\vartheta$ for all $\abs{\varepsilon}<\varepsilon_3$.
  
  We have thus shown that
  $\varepsilon^{-1}(u_\varepsilon-u-\varepsilon y) \to 0$ in
  $\bS^p(0,T)$, uniformly over $h$ in any bounded subset of
  $L^q(\Omega;H)$, as claimed.
\end{proof}

\ifbozza\newpage\else\fi
\section{Fr\'echet differentiability of higher order}
\label{sec:F+}
In this section we show that the $n$-th order Fr\'echet
differentiability of the coefficients of \eqref{eq:1}, in a suitable
sense, implies the $n$-th order Fr\'echet differentiability of the
solution map. We shall work under the following assumptions, that are
stated in terms of the parameter $n \in \mathbb{N}$, $n \geq 2$:
\begin{itemize}
\item[(F${}^n$)] The maps $f(\omega,t,\cdot)$ and $B(\omega,t,\cdot)$
  are $n$ times Fr\'echet differentiable for all
  $(\omega,t) \in \Omega \times [0,T]$, and the maps
  $G(\omega,t,z,\cdot)$, $G_i(\omega,t,z,\cdot)$, $i=1,2$, are $n$
  times Fr\'echet differentiable for all
  $(\omega,t,z) \in \Omega \times [0,T] \times Z$. Moreover, there exists a
  constant $m \geq 0$ such that, for every $j=2,\ldots,n$,
  \[
    \norm[\big]{D^jf(\omega,t,x)}_{\cL_j(H;H)}
    + \norm[\big]{D^jB(\omega,t,x)}_{\cL_j(H;\cL^2(K,H))}
    \lesssim 1 + \norm{x}^m
  \]
  for all $(\omega,t,x) \in \Omega \times [0,T] \times H$, and
  \begin{align*}
    \norm[\big]{D^jG(\omega,t,z,x)}_{\cL_j(H;H)}
    &\lesssim g(\omega,t,z) \bigl( 1 + \norm{x}^m \bigr),\\
    \norm[\big]{D^jG_i(\omega,t,z,x)}_{\cL_j(H;H)}
    &\lesssim g_i(\omega,t,z) \bigl( 1 + \norm{x}^m \bigr), \qquad i=1,2.
  \end{align*}
\end{itemize}
We also stipulate that (F${}^1$) is simply hypothesis (F) of the
previous section. It would be possible to replace the functions $g$,
$g_1$ and $g_2$ with different ones, thus reaching a bit more
generality, but it does not seem to be worth the (mostly notational)
effort.

\begin{example}\label{ex:Fn}
  Let us give an explicit example where assumption (F${}^n$)
  is satisfied with a suitable choice of $m>0$ and not for $m=0$.
  We shall consider $B=G=0$ for simplicity and
  concentrate only on $f$: typical examples for $B$ and $G$
  can be produced following the same argument.
  Let $H=L^2(D)$, where $D\subset\erre^d$ is 
  a smooth bounded domain, and consider the function
  \[
  \gamma:\erre\to\erre\,, \qquad\gamma(r):=\int_0^r\sin(s^2)\,ds\,, \quad r\in\erre\,.
  \]
  It is not difficult to check that $\gamma\in C^\infty(\erre)$, $\gamma$ is 
  Lipschitz-continuous (hence $\gamma'\in C_b(\erre)$), and 
  \[
  |\gamma^{(j)}(r)|\lesssim 1+ |r|^{j-1} \qquad\forall\,r\in\erre\,,\quad\forall\,j\in\mathbb{N},\quad j\geq1\,.
  \]
  However, the derivatives $\gamma^{(j)}$ are {\em not} bounded in $\erre$
  for any $j\geq2$.
  Furthermore, let us fix $\mathcal L\in \cL(H, L^\infty(D))$, and define the 
  operator
  \[
  f: H\to H\,, \qquad
  f(u):=\gamma(\mathcal L u)\,, \quad u\in H\,.
  \]
  Clearly, $f$ is well-defined, Lipschitz-continuous and linearly bounded,
  so that (A2) is satisfied. 
  Moreover, using the fact that $\mathcal L\in \cL(H,L^\infty(D))$
  it a standard matter to check that $f$ is Fr\'echet-differentiable, and
  its derivative is given by
  \[
  Df: H \to \cL(H,H)\,, \qquad
  Df(u)h=\gamma'(\mathcal Lu)\mathcal Lh\,, \quad u,h\in H\,,
  \]
  so that also assumption (F) is satisfied. Note in particular that the first 
  derivative $Df$ is also bounded in $H$ thanks to the Lipschitz-continuity 
  of $f$.
  Furthermore, using the fact that $\mathcal L\in \cL(H,L^\infty(D))$
  a direct computation shows that for every $j\in\mathbb{N}$,
  with $j>1$, $f$ is Fr\'echet-differentiable $j$-times and 
  \[
  D^jf(u):H\to\cL_j(H^j;H)\,, \qquad
  D^jf(u)(h_1,\ldots,h_j)=\gamma^{(j)}(\mathcal L u)\mathcal Lh_1\ldots\mathcal Lh_j\,, 
  \quad u,h_1,\ldots,h_j\in H\,.
  \]
  For every $j>1$,
  by the H\"older inequality and the properties of $\gamma$ and $\mathcal L$
  we have that 
  \[
  \norm{D^jf(u)}_{\cL_j(H^j;H)}\lesssim 1+ \norm{\gamma^{(j)}(\mathcal Lu)}_{L^\infty(D)}
  \lesssim_\gamma 1+ \norm{\mathcal Lu}_{L^\infty(D)}^{j-1}\lesssim_{\mathcal L} 1+ \norm{u}_H^{j-1}\,,
  \]
  so that assumption (F${}^n$) is satisfied for every $n$ with the choice $m=n-1$.
  However, note that the higher-order derivatives of $f$ are {\em not} bounded in $H$
  because of the choice of the function $\gamma$: hence,
  coefficients $f$ in this form cannot be treated using
  available results in literature (as for example \cite{cm:JFA10}).
  On the other hand, these are nonetheless included in our analysis.
\end{example}

\medskip

In the following we shall write, for compactness of notation, $\bL^q$
in place of $L^q(\Omega;H)$. If $u$ (identified with the solution map
$u_0 \mapsto u:\bL^p \to \bS^p$, which is well defined if assumption
(A5${}_p$) holds) is $n$ times Fr\'echet differentiable along
$\bL^{q_1},\ldots,\bL^{q_n}$, we have
\[
  D^nu(u_0) \in \cL_n\bigl(\bL^{q_1},\ldots,\bL^{q_n};\bS^p\bigr).
\]
Under the assumptions of Theorem~\ref{thm:F1}, $u$ is once Fr\'echet
differentiable and $v:=Du(u_0)$ satisfies the equation
\[
  v = S(\cdot)I + S * Df(u)v + S \diamond DB(u)v
  + S \diamond_\mu DG(u_-)v_-,
\]
where $I$ is the identity map. This equation has to be interpreted in
the sense that, for any $h \in \bL^q$, $q>p$, setting $y:=[Du(u_0)]h$,
one has
\[
  y = S(\cdot)h + S*Df(u)y + S \diamond DB(u)y + S
  \diamond_\mu DG(u_-)y_-.
\]
Note that by Lemma~\ref{lm:y} this equation admits a unique solution $y \in \bS^p$ also for
$h \in \bL^p$, and that $h \mapsto y \in \cL(\bL^p,\bS^p)$. However,
if $h$ belongs only to $\bL^p$, we can no longer claim that
$h \mapsto y$ is the Fr\'echet derivative of $u_0 \mapsto u$, as
Theorem~\ref{thm:F1} does not necessarily apply.

We are now going to introduce a system of equations, indexed by
$n \geq 2$, that are formally expected to be satisfied by $D^ju(u_0)$,
$j=1,\ldots,n$, if they exist. 
For any $n\geq 2$, the equation for $u^{(n)}$
can be written as
\begin{equation}
  \label{eq:xx}
\begin{split}
  d\diff{n}{u} + A \diff{n}{u}\,dt
  &= \bigl( \Psi_n + Df(u)\diff{n}{u} \bigr)\,dt
    + \bigl( \Phi_n + DB(u)\diff{n}{u} \bigr)\,dW\\
  &\qquad + \int_Z\bigl( \Theta_n + DG(u_-)\diff{n}{u}_-
  \bigr)\,d\bar\mu, \qquad \diff{n}{u}(0) = 0,
\end{split}
\end{equation}
where $\Psi_n$, $\Phi_n$ and $\Theta_n$ are the formal $n$-th
Fr\'echet derivatives of $f(u)$, $B(u)$ and $G(u_-)$, respectively,
excluding the terms involving the (formal) derivative of $u$ of order
$n$. More precisely, 
assume that $E_1$, $E_2$ and $E_3$ are Banach
spaces and $\phi:E_1 \to E_2$, $F:E_2 \to E_3$ are $n$ times Fr\'echet
differentiable. The chain rule implies that there exists a function
$\tilde{\Phi}^F_n$ such that
\[
  D^n[F(\phi)] = \tilde{\Phi}^F_n\bigl( D\phi,\ldots,D^{n-1}\phi \bigr)
  + DF(\phi)D^n\phi.
\]
We set
$\Phi_n :=
\tilde{\Phi}^B_n\bigl(\diff{1}{u},\diff{2}{u},\ldots,\diff{n-1}{u}\bigr)$. The
definition of $\Psi_n$ and $\Theta_n$ is, \emph{mutatis mutandis},
identical.

The concept of solution for equation \eqref{eq:xx}
is intended as in the case of the first order derivative equation,
i.e.~in the sense of testing against arbitrary directions.
More precisely, we shall say that
\[
  \diff{n}{u} \in \cL_n\bigl(\bL^{q_1},\ldots,\bL^{q_n};\bS^p\bigr),
  \qquad p,q_1,\ldots,q_n \geq 1,
\]
is a solution to \eqref{eq:xx} if, for any
\[
  (h_1,\ldots,h_n) \in \bL^{q_1} \times \cdots \times \bL^{q_n},
\]
the process $\diff{n}{u}(h_1,\ldots,h_n) \in \bS^p$ satisfies
\begin{align*}
  \diff{n}{u}(h_1,\ldots,h_n)
  &= S * \Psi_n(h_1,\ldots,h_n)
    + S * Df(u) \diff{n}{u}(h_1,\ldots,h_n)\\
  &\quad +S \diamond \Phi_n(h_1,\ldots,h_n)
    + S \diamond DB(u) \diff{n}{u}(h_1,\ldots,h_n)\\
  &\quad +S \diamond_\mu \Theta_n(h_1,\ldots,h_n)
    + S \diamond_\mu DG(u_-) \diff{n}{u}_-(h_1,\ldots,h_n).
\end{align*}

Let us show some properties of the coefficients
$\Psi_n$, $\Phi_n$ and $\Theta_n$.
We are going to use some algebraic properties of the
``representing'' map $\tilde{\Phi}^F_n$. In particular, although a (kind
of) explicit expression for $\tilde{\Phi}_n^F$ can be written in terms
of a variant of the Fa\`a di Bruno formula (as it was done
for example in \cite{Jentzen}), for our purposes it
suffices to know that $\tilde{\Phi}_n^F$ is a sum of terms of the form
\[
  D^jF(\phi)\bigl(D^{\alpha_1}\phi,\ldots,D^{\alpha_j}\phi\bigr),
\]
with $j \in \{2,\ldots,n\}$, $\alpha_1 + \cdots + \alpha_j = n$,
$\alpha_i \geq 1$ for all $i \in \{1,\ldots,j\}$. Moreover, since
$D^n[F(\phi)]$ is an $n$-linear map on $E_1^n$ with values in $E_3$
(with $E_1^n$ being the cartesian product of $E_1$ by itself $n$-times),
one has that, for any $(h_1,\ldots,h_n) \in E_1^n$,
$D^n[F(\phi)](h_1,\ldots,h_n)$ is a sum of terms of the form
\[
  D^jF(\phi)\bigl(%
           D^{\alpha_1}\phi(h_{\sigma(1)},\ldots,h_{\sigma(\alpha_1)}),
           \ldots,
           D^{\alpha_j}\phi(h_{\sigma(A_j+1)},\ldots,h_{\sigma(n)})
           \bigr),
\]
where $A_j:=\alpha_1+\cdots+\alpha_{j-1}$, and $\sigma$ is an element of
the permutation group of $\{1,\ldots,n\}$.
We shall also need the following identities, that we write already in
the specific form needed later, although they are obviously a
consequence of the definition of $\tilde{\Phi}^F_n$:
\begin{equation}
  \label{eq:sai}
  \begin{split}
  D\Psi_n &= \Psi_{n+1} - D^2f(u)(u',\diff{n}{u}),\\
  D\Phi_n &= \Phi_{n+1} - D^2B(u)(u',\diff{n}{u}),\\
  D\Theta_n &= \Theta_{n+1} - D^2G(u_-)(u'_-,\diff{n}{u}_-),
  \end{split}
\end{equation}
where we have written, as customary, $u'$ in place of $\diff{1}{u}$.
We are going to write, for the convenience of the reader, the first
three formal derivatives of $B(u)$ and the expressions for $\Phi_n$
(the corresponding calculations for $f(u)$, $G(u_-)$, $\Psi_n$, and
$\Theta_n$ are entirely analogous). One has
\begin{align*}
  D[B(u)] &= DB(u)u',\\
  D^2[B(u)] &= D^2B(u)\bigl(u', u'\bigr)+ DB(u)u^{(2)},\\
  D^3[B(u)] &= D^3B(u)\bigl(u', u', u'\bigr) +
              D^2B(u)\bigl(u^{(2)}, u'\bigr) + D^2B(u)\bigl(u', u^{(2)}\bigr)\\
          &\quad  + D^2B(u)\bigl(u', u^{(2)}\bigr) +
            DB(u)u^{(3)}\\
          &= D^3B(u)\bigl(u', u', u'\bigr) %
            + 3 D^2B(u)\bigl(u^{(2)}, u'\bigr) + DB(u)u^{(3)},\\
  \Phi_1&=0,\\
  \Phi_2&= D^2B(u)\bigl(u', u'\bigr),\\
  \Phi_3&=D^3B(u)\bigl(u', u', u'\bigr) +
          3D^2B(u)\bigl(u^{(2)}, u'\bigr),
\end{align*}
where we have used Schwarz's theorem on the symmetry of higher-order
continuous Fr\'echet derivatives.

The first result that we present concerns the existence and uniqueness
of solutions to equation \eqref{eq:xx} in the sense specified above.
More precisely, we show in the next proposition that equation
\eqref{eq:xx} admits a unique solution $u^{(n)}$, belonging to
$\cL_n\bigl(\bL^{p_1},\ldots,\bL^{p_n};\bS^p\bigr)$. Note that to
study differentiability we shall restrict to the case $p_1=\cdots=p_n$
(see Remark~\ref{coeff_p} below). However, since well-posedness for
linear stochastic equations for multilinear maps such as \eqref{eq:xx}
could be interesting in its own right, we shall provide a general
result considering arbitrary $p_1,\ldots,p_n$.

\begin{prop}
  \label{prop:aux}
  Let $n \geq 1$ and $p,p_0,p_1,\ldots,p_n \geq 1$ be such that
  $u_0 \in \bL^p \cap \bL^{mp_0} = \bL^{p \vee mp_0}$ and
  \begin{equation}
    \label{eq:recu}
    \frac{n-1}{p_0} + \frac{1}{p_1} + \cdots + \frac{1}{p_n}
    \leq \frac{1}{p}.
  \end{equation}
  Assume that
  \begin{itemize}
  \item[\emph{(i)}] hypothesis \emph{(F${}^n$)} is satsfied;
  \item[\emph{(ii)}] hypothesis \emph{(A5${}_r$)} holds for all
    $r \in [p,\max_{i \geq 1} p_i]\cup\{mp_0\}$.
  \end{itemize}
  Then \eqref{eq:xx} admits a unique solution
  \[
    \diff{n}{u} \in \cL_n\bigl(\bL^{p_1},\ldots,\bL^{p_n};\bS^p\bigr).
  \]
\end{prop}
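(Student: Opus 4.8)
The plan is to induct on $n$. The base case $n=1$ is Lemma~\ref{lm:y}: \eqref{eq:recu} forces $p_1\ge p$, and the linearized equation \eqref{eq:y_F} — whose coefficients $Df(u)$, $DB(u)$, $DG(u_-)$ are linear in the state with operator norms dominated by $C_f$, $C_B$ and $g_1+g_2$ — has, by Theorem~\ref{thm:WP} applied to it together with (A5${}_{p_1}$), a unique solution $y\in\bS^{p_1}\hookrightarrow\bS^p$ depending linearly and continuously on $h$. For $n\ge2$ we assume the statement (in fact a quantitative form of it, tracking norms) for all orders $\le n-1$.

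Fix $(h_1,\dots,h_n)\in\bL^{p_1}\times\cdots\times\bL^{p_n}$. The key step is to show that $\Psi_n(h_1,\dots,h_n)$, $\Phi_n(h_1,\dots,h_n)$, $\Theta_n(h_1,\dots,h_n)$ lie respectively in $L^p(\Omega;L^1(0,T;H))$, $L^p(\Omega;L^2(0,T;\cL^2(K,H)))$ and $\bG^p$, with norms $\lesssim\prod_k\norm{h_k}_{\bL^{p_k}}$ (implicit constant independent of the $h_k$). By the structure of $\tilde\Phi_n^F$ recalled above, each of these is a finite sum of terms $D^jF(w)\bigl(u^{(\alpha_1)}(h_{\sigma(1)},\dots),\dots,u^{(\alpha_j)}(\dots,h_{\sigma(n)})\bigr)$ with $F\in\{f,B,G\}$ (or $G_1,G_2$), $w\in\{u,u_-\}$, $j\in\{2,\dots,n\}$, $\alpha_i\ge1$ and $\sum_i\alpha_i=n$, so that in particular each $\alpha_i\le n-1$. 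Pointwise, (F${}^n$) gives $\norm{D^jF(w)(\cdots)}\lesssim\gamma\,(1+\norm{w}^m)\prod_{i=1}^j\norm{u^{(\alpha_i)}(h_{K_i})}$, where $\gamma\equiv1$ for $F\in\{f,B\}$ and $\gamma=g$ (respectively $g_1$ or $g_2$) for $F=G$ (respectively $G_1$ or $G_2$). Taking the appropriate time norm and then the norm in $\Omega$: the $g$-factors are absorbed by (A5${}_p$); the factor $1+(u^*_T)^m$ lies in $L^{p_0}(\Omega)$ since $u\in\bS^{mp_0}$ by Theorem~\ref{thm:WP} (using $u_0\in\bL^{mp_0}$ and (A5${}_{mp_0}$)); and each factor $u^{(\alpha_i)}(h_{K_i})$ is controlled, via the induction hypothesis at order $\alpha_i$ (or Lemma~\ref{lm:y} when $\alpha_i=1$), in $\bS^{r_i}$ with $\frac1{r_i}=\frac{\alpha_i-1}{p_0}+\sum_{k\in K_i}\frac1{p_k}$ and with norm $\lesssim\prod_{k\in K_i}\norm{h_k}_{\bL^{p_k}}$. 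A generalized H\"older inequality then closes the estimate, because $\frac1{p_0}+\sum_{i=1}^j\frac1{r_i}=\frac{n-j+1}{p_0}+\sum_{k=1}^n\frac1{p_k}\le\frac{n-1}{p_0}+\sum_{k=1}^n\frac1{p_k}\le\frac1p$ by $j\ge2$ and \eqref{eq:recu}. For $\Theta_n$ one argues inside the definition of $\bG^p$, distinguishing $p\ge2$ from $1\le p<2$ (in the latter case threading the decomposition $G=G_1+G_2$, hence $D^jG=D^jG_1+D^jG_2$, through all the derivatives) and invoking (A5${}_r$) for the exponents $r$ occurring in hypothesis~(ii).

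Granting this, $F_n(\vec h):=S\ast\Psi_n(\vec h)+S\diamond\Phi_n(\vec h)+S\diamond_\mu\Theta_n(\vec h)\in\bS^p$ by Lemma~\ref{lm:dc}, Proposition~\ref{prop:sc} and Lemma~\ref{prop:scs}, with $\norm{F_n(\vec h)}_{\bS^p}\lesssim\prod_k\norm{h_k}_{\bL^{p_k}}$, and $\vec h\mapsto F_n(\vec h)$ is an $n$-linear bounded map into $\bS^p$. Equation \eqref{eq:xx} tested against $\vec h$ reads $Y=F_n(\vec h)+\mathcal{K}Y$ in $\bS^p$, where $\mathcal{K}Y:=S\ast Df(u)Y+S\diamond DB(u)Y+S\diamond_\mu DG(u_-)Y_-$ is precisely the linear operator underlying \eqref{eq:y_F}. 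By the contraction argument in the proof of Theorem~\ref{thm:WP} (the coefficients of $\mathcal{K}$ being linear in the state with norms $\le C_f$, $C_B$, $g_1+g_2$, and (A5${}_p$) being in force), $\mathcal{K}$ is a contraction on $\bS^p(0,T_0)$ for $T_0$ small, so $\mathrm{Id}-\mathcal{K}$ is boundedly invertible on $\bS^p(0,T)$ after the usual patching; hence $u^{(n)}(\vec h):=(\mathrm{Id}-\mathcal{K})^{-1}F_n(\vec h)$ is the unique solution in $\bS^p$. Since $(\mathrm{Id}-\mathcal{K})^{-1}$ is bounded and $F_n$ is $n$-linear bounded, $\vec h\mapsto u^{(n)}(\vec h)$ belongs to $\cL_n(\bL^{p_1},\dots,\bL^{p_n};\bS^p)$, and per-$\vec h$ uniqueness of the solution gives uniqueness in that space.

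The delicate step is the first one, placing $\Psi_n$, $\Phi_n$, $\Theta_n$ in the right spaces, where the polynomial growth of (F${}^n$) has to be balanced against the integrability budget; what makes it work is that the chain rule only ever produces terms of total order $\le n-1$ in the already-constructed lower-order derivatives (because $j\ge2$), which is exactly what \eqref{eq:recu} is designed to accommodate. A minor subtlety: the exponents $r_i$ above may exceed the integrability of $u_0$, so in the induction one should carry the statement in the form $u^{(k)}(\cdot)\in\bS^r$ with $\frac1r=\frac{k-1}{p_0}+\sum_\ell\frac1{p_{k_\ell}}$ without insisting $u_0\in\bL^r$ — from which the hypotheses as stated follow a fortiori — and the bookkeeping for $\Theta_n$ when $1\le p<2$ adds purely notational overhead.
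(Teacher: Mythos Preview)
Your proposal is correct and follows essentially the same route as the paper: induction on the order, the Fa\`a di Bruno decomposition of $\Psi_n,\Phi_n,\Theta_n$ into terms $D^jF(u)\bigl(u^{(\alpha_1)},\dots,u^{(\alpha_j)}\bigr)$ with $j\ge2$ and $\sum\alpha_i=n$, and the H\"older bookkeeping via the auxiliary exponents $\frac{1}{r_i}=\frac{\alpha_i-1}{p_0}+\sum_{k\in K_i}\frac{1}{p_k}$ (the paper's $\tilde p_i$), whose sum together with $\frac{1}{p_0}$ telescopes to $\frac{n-j+1}{p_0}+\sum_k\frac{1}{p_k}\le\frac{1}{p}$ thanks to $j\ge2$ and \eqref{eq:recu}. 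Your framing of the last step as invertibility of $\mathrm{Id}-\mathcal K$ on $\bS^p$ is equivalent to the paper's direct a~priori estimate \eqref{eq:ecce} obtained by the small-interval-plus-patching argument, and your remark about carrying the induction in the quantitative form $u^{(k)}\in\bS^r$ with $\frac{1}{r}=\frac{k-1}{p_0}+\sum_\ell\frac{1}{p_{k_\ell}}$ is exactly how the paper applies the inductive hypothesis with the $\tilde p_i$ in place of $p$.
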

\begin{proof}
  First of all, let us explain why $\diff{n}{u}$, if it exists, must be $n$-linear (in
  the algebraic sense). Since $u'=Du$ is indeed a linear map, we can use
  induction as follows: assuming that $\diff{j}{u}$ is $j$-linear for
  all $j<k$, with $k\in\{2,\ldots,n\}$, we are going to show that $\diff{k}{u}$ is $k$-linear. The
  inductive assumption and the functional form of $\Psi_k$, $\Phi_k$,
  and $\Theta_k$ imply that they are $k$-linear. Considering the equation
  \begin{align*}
  v &= S * \Psi_k(h_1,\ldots,h_k)
    + S * Df(u) v\\
  &\quad +S \diamond \Phi_k(h_1,\ldots,h_k)
    + S \diamond DB(u) v\\
  &\quad +S \diamond_\mu \Theta_k(h_1,\ldots,h_k)
    + S \diamond_\mu DG(u_-) v_-,
  \end{align*}
  assuming that a solution exists for every
  $(h_1,\ldots,h_k) \in \bL^{q_1} \times \cdots \times \bL^{q_k}$,
  $q_1,\ldots,q_k \geq 1$, it suffices to show that the map
  $(h_1,\ldots,h_k) \mapsto v$ is $k$-linear, which is immediate.\\
  Let us focus now on existence.
  We are going to reason by induction on the order of (formal)
  derivation $k\in\{1,\ldots,n\}$. The claim is certainly true for $k=1$:
  Theorem~\ref{thm:G1} implies, thanks for assumption (ii), that
  $u' \in \cL(\bL^r,\bS^r)$ for every $r \in [p,\max_{i \geq 1} p_i]$,
  hence also $u' \in \cL(\bL^s,\bS^r)$
  for every $s \geq r$, as then $\bL^s$ is contractively
  embedded in $\bL^r$.
  Let us now assume the the claim is true for all $j\leq k\in\{1,\ldots,n-1\}$, and
  consider $h_j\in\bL^{p_j}$ with $p_j\geq p$, for $j=1,\ldots,k+1$,
  such that
  \[
    \frac{k}{p_0} + \frac1{p_1} + \ldots + \frac1{p_{k+1}} \leq
    \frac1p.
  \]
  In order to control the $\bS^p$ norm of
  $\diff{k+1}{u}(h_1,\ldots,h_{k+1})$ it is enough to estimate
  \begin{gather*}
    \norm[\big]{\Psi_{k+1}(h_1,\ldots,h_{k+1})}_{L^p(\Omega;L^1(0,T;H)},\\
    \norm[\big]{\Phi_{k+1}(h_1,\ldots,h_{k+1})}_{L^p(\Omega;L^2(0,T;\cL^2(K,H)))},\\
    \norm[\big]{\Theta_{k+1}(h_1,\ldots,h_{k+1})}_{\bG^p}.
  \end{gather*}
  In fact, recalling that $Df(u)$,
  $DB(u)$ and $DG(u)$ are bounded linear operators (in the same sense
  as in the proofs of Theorems~\ref{thm:G1} and \ref{thm:F1}), one
  has, for any $[t_0,t_1] \subseteq [0,T]$, omitting the indication of
  the arguments $(h_j)$ for simplicity of notation,
  \begin{align*}
    \norm[\big]{\diff{k+1}{u}}_{\bS^p(t_0,t_1)}
    &\leq \norm[\big]{\diff{k+1}{u}(t_0)}_{\bL^p}
      + \norm[\big]{S \ast \Psi_{k+1}}_{\bS^p(t_0,t_1)}
      + \norm[\big]{S \ast Df(u) \diff{k+1}{u}}_{\bS^p(t_0,t_1)}\\
    &\quad + \norm[\big]{S \diamond \Phi_{k+1}}_{\bS^p(t_0,t_1)}
      + \norm[\big]{S \diamond DB(u) \diff{k+1}{u}}_{\bS^p(t_0,t_1)}\\
    &\quad + \norm[\big]{S \diamond_\mu \Theta_{k+1}}_{\bS^p(t_0,t_1)}
      + \norm[\big]{S \diamond_\mu DG(u) \diff{k+1}{u}}_{\bS^p(t_0,t_1)}\\
    &\lesssim \norm[\big]{\Psi_{k+1}}_{L^p(\Omega;L^1(t_0,t_1;H))}
      + \norm[\big]{\Phi_{k+1}}_{L^p(\Omega;L^2(t_0,t_1;\cL^2(K,H)))}
      + \norm[\big]{\Theta_{k+1}}_{\bG^p(t_0,t_1)}\\
    &\quad + \norm[\big]{\diff{k+1}{u}(t_0)}_{\bL^p}
      + \kappa(t_1-t_0) \norm[\big]{\diff{k+1}{u}}_{\bS^p(t_0,t_1)},
  \end{align*}
  where the implicit constant does not depend on $t_1-t_0$ (and also
  not on $k$). We proceed now as in the proof of 
  Lemma~\ref{lm:fond}: choosing $T_0>0$ sufficiently small and partitioning
  $[0,T]$ in intervals of lenght not exceeding $T_0$, it follows from
  $\diff{k+1}{u}(0)=0$ that
  \begin{equation}
    \label{eq:ecce}
    \norm[\big]{\diff{k+1}{u}}_{\bS^p} \lesssim
    \norm[\big]{\Psi_{k+1}}_{L^p(\Omega;L^1(0,T;H))}
    + \norm[\big]{\Phi_{k+1}}_{L^p(\Omega;L^2(0,T;\cL^2(K,H)))}
    + \norm[\big]{\Theta_{k+1}}_{\bG^p},
  \end{equation}
  as claimed. Let us consider the second term on the right-hand side
  of the previous inequality (the first one can be handled in a
  completely similar way). As already seen, the generic term in
  $\Phi_{k+1}(h_1,\ldots,h_{k+1})$ is of the form
  \[
  D^jB(u)\bigl(%
           \diff{\alpha_1}{u}(h_{\sigma(1)},\ldots,h_{\sigma(\alpha_1)}),
           \ldots,
           \diff{\alpha_j}{u}(h_{\sigma(\beta+1)},\ldots,h_{\sigma(k)})
           \bigr),
  \]
  where $j \in \{2,\ldots,k+1\}$, $\alpha_1+\cdots+\alpha_j=k+1$,
  $\beta:=\alpha_1+\cdots+\alpha_{j-1}$, and $\sigma$ is an element of
  the permutation group of $\{1,\ldots,k+1\}$. Since $j \geq 2$
  implies
  \[
  1 + (\alpha_1-1) + \cdots + (\alpha_j-1)
  = \alpha_1 + \cdots + \alpha_j + 1 - j = k+2-j \leq k,
  \]
  one has
  \begin{align*}
  \frac{1}{p_0}
  &+ \frac{\alpha_1-1}{p_0} + \frac{1}{p_{\sigma(1)}}
    + \cdots + \frac{1}{p_{\sigma(\alpha_1)}}\\
  &+ \frac{\alpha_2-1}{p_0} + \frac{1}{p_{\sigma(\alpha_1+1)}}
    + \cdots + \frac{1}{p_{\sigma(\alpha_1+\alpha_2)}}\\
  &\vdotswithin{+}\\
  &+ \frac{\alpha_j-1}{p_0}
    + \frac{1}{p_{\sigma(\alpha_1+\cdots+\alpha_{j-1}+1)}}
    + \cdots + \frac{1}{p_{\sigma(k+1)}}\\
  &\hspace{3em} \leq \frac{n}{p_0} + \frac{1}{p_1} + \cdots
    + \frac{1}{p_{k+1}} = \frac{1}{p},
  \end{align*}
  so that setting
  \begin{align*}
    \frac{1}{\tilde{p}_1} &:=  \frac{\alpha_1-1}{p_0} + \frac{1}{p_{\sigma(1)}}
    + \cdots + \frac{1}{p_{\sigma(\alpha_1)}},\\
    \frac{1}{\tilde{p}_2} &:= \frac{\alpha_2-1}{p_0} + \frac{1}{p_{\sigma(\alpha_1+1)}}
    + \cdots + \frac{1}{p_{\sigma(\alpha_1+\alpha_2)}},\\
  &\vdotswithin{:=}\\
  \frac{1}{\tilde{p}_j} &:= \frac{\alpha_j-1}{p_0}
    + \frac{1}{p_{\sigma(\alpha_1+\cdots+\alpha_{j-1}+1)}}
    + \cdots + \frac{1}{p_{\sigma(k+1)}},
  \end{align*}
  it holds
  \[
    \frac{1}{p_0} + \frac{1}{\tilde{p}_1} + \cdots +
    \frac{1}{\tilde{p}_{j}} \leq \frac{1}{p}.
  \]
  Assumption (F${}^n$) now implies
  \begin{align*}
    &\norm[\big]{D^jB(u)\bigl(%
           \diff{\alpha_1}{u}(h_{\sigma(1)},\ldots,h_{\sigma(\alpha_1)}),
           \ldots,
           \diff{\alpha_j}{u}(h_{\sigma(\beta+1)},\ldots,h_{\sigma(k)})
           \bigr)}_{\cL^2(K,H)}\\
    &\hspace{3em} \lesssim \bigl( 1 + \norm{u}^m \bigr)
      \norm[\big]{%
      \diff{\alpha_1}{u}(h_{\sigma(1)},\ldots,h_{\sigma(\alpha_1)})}_{\cL^2(K,H)}
      \; \cdots\\
    &\hspace{3em} \qquad \cdots \;
      \norm[\big]{%
      \diff{\alpha_j}{u}(h_{\sigma(\beta+1)},\ldots,h_{\sigma(k)})}_{\cL^2(K,H)},
  \end{align*}
  which yields, thanks to the estimate
  $\norm{\cdot}_{L^2(0,T)} \leq T^{1/2} \norm{\cdot}_{L^\infty(0,T)}$,
  \begin{align*}
    &\norm[\big]{D^jB(u)\bigl(%
      \diff{\alpha_1}{u}(h_{\sigma(1)},\ldots,h_{\sigma(\alpha_1)}),
      \ldots,
      \diff{\alpha_j}{u}(h_{\sigma(\beta+1)},\ldots,h_{\sigma(k)})
      \bigr)}_{L^2(0,T;\cL^2(K,H))}\\
    &\hspace{3em} \lesssim \bigl( 1 + u^{*m} \bigr)
      \bigl(\diff{\alpha_1}{u}(h_{\sigma(1)},\ldots,h_{\sigma(\alpha_1)})\bigr)^*
      \; \cdots \; \bigl(%
      \diff{\alpha_j}{u}(h_{\sigma(\beta+1)},\ldots,h_{\sigma(k)})\bigr)^*,
  \end{align*}
  where the implicit constant depends also on $T$. 
  Here and in the following we write, for simplicity of notation,
  $\phi^*:=\phi^*_T$ for any c\`adl\`ag function $\phi$.
  H\"older's inequality yields
  \begin{align*}
    &\norm[\big]{D^jB(u)\bigl(%
      \diff{\alpha_1}{u}(h_{\sigma(1)},\ldots,h_{\sigma(\alpha_1)}),
      \ldots,
      \diff{\alpha_j}{u}(h_{\sigma(\beta+1)},\ldots,h_{\sigma(k)})
      \bigr)}_{L^p(\Omega;L^2(0,T;\cL^2(K,H)))}\\
    &\hspace{3em} \lesssim \bigl( 1 + \norm[\big]{u^{*m}}_{\bL^{p_0}} \bigr)
      \norm[\big]{%
      \diff{\alpha_1}{u}(h_{\sigma(1)},\ldots,h_{\sigma(\alpha_1)})}%
      _{\bS^{\tilde{p}_1}}
      \; \cdots \; \norm[\big]{%
      \diff{\alpha_j}{u}(h_{\sigma(\beta+1)},\ldots,h_{\sigma(k+1)})}%
    _{\bS^{\tilde{p}_{j}}},
  \end{align*}
  where, as before, $\beta:=\alpha_1+\cdots+\alpha_{j-1}$.
  It follows by the definition of $\tilde{p}_1,\ldots,\tilde{p}_j$ and
  the inductive assumption that
  \begin{align*}
    \diff{\alpha_1}{u}
    &\in \cL_{\alpha_1}\bigl(\bL^{p_{\sigma(1)}},\ldots,\bL^{p_{\sigma(\alpha_1)}};
      \bS^{\tilde{p}_1}\bigr),\\
    &\vdotswithin{\in}\\
    \diff{\alpha_j}{u}
    &\in \cL_{\alpha_j}\bigl(\bL^{p_{\sigma(\beta+1)}},\ldots,\bL^{p_{\sigma(k+1)}};
      \bS^{\tilde{p}_j}\bigr),
  \end{align*}
  hence, recalling that
  $\norm[\big]{u^{*m}}_{\bL^{p_0}} = \norm[\big]{u}^m_{\bS^{mp_0}}
  \lesssim 1+\norm[\big]{u_0}^m_{\bL^{mp_0}}$ by Theorem~\ref{thm:WP},
  \begin{align*}
    &\norm[\big]{D^jB(u)\bigl(%
      \diff{\alpha_1}{u}(h_{\sigma(1)},\ldots,h_{\sigma(\alpha_1)}),
      \ldots,
      \diff{\alpha_j}{u}(h_{\sigma(\beta+1)},\ldots,h_{\sigma(k)})
      \bigr)}_{L^p(\Omega;L^2(0,T;\cL^2(K,H)))}\\
    &\hspace{3em} \lesssim \bigl( 1 + \norm[\big]{u_0}^m_{\bL^{mp_0}} \bigr)
      \norm[\big]{h_{\sigma(1)}}_{\bL^{p_{\sigma(1)}}} \; \cdots \;
      \norm[\big]{h_{\sigma(n+1)}}_{\bL^{p_{\sigma(k+1)}}}\\
    &\hspace{3em} \lesssim \bigl( 1 + \norm[\big]{u_0}^m_{\bL^{mp_0}} \bigr)
      \norm[\big]{h_1}_{\bL^{p_1}} \; \cdots \;
      \norm[\big]{h_{k+1}}_{\bL^{p_{k+1}}}.
  \end{align*}
  Estimating the $\bG^p$ norm of $\Theta_{k+1}$ is similar: using the
  same notation used thus far, the generic term in
  $\Theta_{k+1}(h_1,\ldots,h_{k+1})$ is of the type
  \[
    D^jG(u_-)\bigl(%
    \diff{\alpha_1}{u}_-(h_{\sigma(1)},\ldots,h_{\sigma(\alpha_1)}),
    \ldots,
    \diff{\alpha_j}{u}_-(h_{\sigma(\beta+1)},\ldots,h_{\sigma(k)})
    \bigr),
  \]
  and hypothesis (F${}^n$) implies
  \begin{align*}
    &\norm[\big]{D^jG_i(u_-)\bigl(%
    \diff{\alpha_1}{u}_-(h_{\sigma(1)},\ldots,h_{\sigma(\alpha_1)}),
    \ldots,
    \diff{\alpha_j}{u}_-(h_{\sigma(\beta+1)},\ldots,h_{\sigma(k)})
      \bigr)}\\
    &\hspace{3em} \lesssim g_i \bigl( 1 + \norm{u}^m \bigr)
      \norm[\big]{%
      \diff{\alpha_1}{u}(h_{\sigma(1)},\ldots,h_{\sigma(\alpha_1)})}
      \; \cdots\\
    &\hspace{3em} \qquad \cdots \;
      \norm[\big]{%
      \diff{\alpha_j}{u}(h_{\sigma(\beta+1)},\ldots,h_{\sigma(k)})}
  \end{align*}
  for all $(t,z) \in [0,T] \times Z$, $\P$-a.s., for both $i=1$ and
  $i=2$ (we can identify again $g_1$ and $g_2$ with $g$ depending on
  the value of $p$, and similarly for $G_1$ and $G_2$). This yields,
  after standard computations already detailed more than once,
  \begin{align*}
    &\norm[\big]{D^jG(u_-)\bigl(%
    \diff{\alpha_1}{u}_-(h_{\sigma(1)},\ldots,h_{\sigma(\alpha_1)}),
    \ldots,
    \diff{\alpha_j}{u}_-(h_{\sigma(\beta+1)},\ldots,h_{\sigma(k)})
      \bigr)}_{\bG^p}\\
    &\hspace{3em} \lesssim \kappa(T) \bigl( 1 + \norm[\big]{u^{*m}}_{\bL^{p_0}}\bigr)
      \norm[\big]{\diff{\alpha_1}{u}%
      (h_{\sigma(1)},\ldots,h_{\sigma(\alpha_1)})}_{\bS^{\tilde{p}_1}}
      \; \cdots\\
    &\hspace{3em} \qquad \cdots \;
      \norm[\big]{\diff{\alpha_j}{u}%
      (h_{\sigma(\beta+1)},\ldots,h_{\sigma(k)})}_{\bS^{\tilde{p_j}}},
  \end{align*}
  It hence follows by the inductive assumption, as
  before, that
  \begin{align*}
    &\norm[\big]{D^jG(u)\bigl(%
      \diff{\alpha_1}{u}(h_{\sigma(1)},\ldots,h_{\sigma(\alpha_1)}),
      \ldots,
      \diff{\alpha_j}{u}(h_{\sigma(\beta+1)},\ldots,h_{\sigma(k)})
      \bigr)}_{\bG^p}\\
    &\hspace{3em} \lesssim \bigl( 1 + \norm[\big]{u_0}^m_{\bL^{mp_0}}
      \bigr) \norm[\big]{h_1}_{\bL^{p_1}} \; \cdots \;
      \norm[\big]{h_{k+1}}_{\bL^{p_{k+1}}}.
  \end{align*}
  Since $p_1,\ldots,p_{k+1}$ were arbitrary, we have proved that
  $k/p_0 + \sum_{j=1}^{k+1} 1/p_j \leq 1/p$ implies
  \[
    \diff{k+1}{u} \in \cL_{k+1}(\bL^{p_1},\ldots,\bL^{p_{k+1}};\bS^p),
  \]
  thus completing the induction argument by arbitrariness of $k$.
\end{proof}  

\begin{rmk}\label{coeff_p}
  If $p_1=\cdots=p_n=q$, condition \eqref{eq:recu} becomes
  \[
  \frac{n-1}{p_0} + \frac{n}{q} \leq \frac{1}{p},
  \]
  which implies $q \geq np$ and $p_0 \geq (n-1)p$, hence $p_0 \geq p$
  if $n \geq 2$. In particular, if $q=np$, then $p_0=+\infty$,
  i.e. $u_0$ must be bounded almost surely. If $q>np$, then $p_0$ will
  also be finite, and strictly larger than $p$ if $n \geq 2$.
  Furthermore, if $q > (n+nm-m)p$, then $u_0 \in \bL^q$ implies
  $\diff{n}{u} \in \cL_n(\bL^q;\bS^p)$. In fact, for this to be true
  it suffices that $\bL^q \subseteq \bL^{mp_0 \vee p}$, which is
  equivalent to $q \geq mp_0 \vee p$. But since $q \geq np \geq p$, we
  can simply choose $q=mp_0$, which yields, excluding the case
  $p_0=+\infty$,
  \[
  \frac{(n-1)m}{q} + \frac{n}{q} < \frac{1}{p},
  \]
  or, equivalently, $q > (n+nm-m)p$.

  We repeat, however, that even under these conditions we cannot yet
  claim that $\diff{n}{u}$ identifies the $n$-th Fr\'echet derivative
  of $u$. In fact, we shall prove that $D^nu$ satisfies the equation
  for $\diff{n}{u}$ when ``tested'' on $(\bL^q)^n$, with $q$ satisfying a
  strictly stronger constraint than just $q>(n+mn-m)p$.
\end{rmk}

Before considering Fr\'echet differentiability of $n$-th order, we
need some preparations. The following two lemmata are used to apply
the theorem on the Fr\'echet differentiability of the composition of
two Fr\'echet differentiable functions.

By the assumptions (A2), (A3) and (A4), it follows immediately
that the superposition operators associated to 
$f$, $B$ and $G$ on $\bS^p$, i.e. 
$\phi\mapsto f(\phi), B(\phi), G(\phi_-)$,
can be considered as maps, denoted by the
same symbols for simplicity,
\begin{align*}
    f: \bS^p &\longrightarrow \bS^p,\\
    B: \bS^p &\longrightarrow L^p(\Omega;L^2(0,T;\cL^2(K,H))),\\
    G: \bS^p &\longrightarrow \bG^p.
  \end{align*}

\begin{lemma}
  \label{lm:Fcomp}
  Let $p \geq 1$, $r>0$, $q \geq 1$, and $n \in \mathbb{N}$ satisfy
  \[
    \frac1r + \frac{n}{q} < \frac1p, \qquad
    \frac{n+m}{q} < \frac{1}{p}.
  \]
  If hypothesis \emph{(F${}^n$)} is
  satisfied, then
  $f$, $B$ and $G$
  are $n$-times Fr\'echet differentiable in
  $\bS^{mr} \cap \bS^p = \bS^{mr \vee p}$ along $\bS^q$, with
  \begin{align*}
    D^jf: \bS^{mr \vee p}
    &\longrightarrow \cL_j(\bS^q;\bS^p),\\
    D^jB: \bS^{mr \vee p}
    &\longrightarrow
    \cL_j\bigl(\bS^q;L^p(\Omega;L^2(0,T;\cL^2(K,H)))\bigr),\\
    D^jG: \bS^{mr \vee p}
    &\longrightarrow \cL_j(\bS^q;\bG^p)
  \end{align*}
  for all $j \in \{1,\ldots,n\}$.
\end{lemma}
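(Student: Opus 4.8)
The plan is to prove the assertion for $f$; the cases of $B$ and $G$ are entirely analogous, with $\bS^p$ replaced by $L^p(\Omega;L^2(0,T;\cL^2(K,H)))$, resp.\ by $\bG^p$, and with the trivial estimate $\norm{\cdot}_{L^2(0,T)}\le T^{1/2}\norm{\cdot}_{L^\infty(0,T)}$ for $B$, resp.\ the decomposition into $L^p(\nu)$- and $L^2(\nu)$-norms together with \emph{(A5${}_p$)} for $G$, used exactly as in the proofs of Theorem~\ref{thm:F1} and Proposition~\ref{prop:aux}. Denoting by $f$ also the associated superposition operator, the candidate $j$-th derivative at $\phi\in\bS^{mr\vee p}$ is, for $1\le j\le n$, the pointwise composition $(\psi_1,\ldots,\psi_j)\mapsto D^jf(\phi)(\psi_1,\ldots,\psi_j)$. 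The first step is to check that this is a well-defined element of $\cL_j(\bS^q;\bS^p)$: by \emph{(F${}^n$)} one has $\norm{D^jf(\phi(\omega,t))}_{\cL_j(H;H)}\lesssim 1+\norm{\phi(\omega,t)}^m$ (and $\le C_f$ if $j=1$), so H\"older's inequality, using $\tfrac1r+\tfrac jq\le\tfrac1r+\tfrac nq<\tfrac1p$, yields
\[
  \norm[\big]{D^jf(\phi)(\psi_1,\ldots,\psi_j)}_{\bS^p}
  \lesssim \bigl(1+\norm{\phi}_{\bS^{mr}}^m\bigr)\prod_{i=1}^j\norm{\psi_i}_{\bS^q}.
\]
Note that the hypotheses force $r>p$ and $q>p$, whence $\bS^{mr\vee p}\subseteq\bS^{mr}\cap\bS^p$ and $(\phi^*)^m\in L^r(\Omega)\subseteq L^p(\Omega)$ for $\phi$ in that space.

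Next I would invoke the definition of higher-order Fr\'echet derivative to reduce, by induction on $j$, the claim to the following: for each $j\in\{0,1,\ldots,n-1\}$ (with $D^0f:=f$ and $\cL_0:=\bS^p$) the map $\phi\mapsto D^jf(\phi)$, from $\bS^{mr\vee p}$ to $\cL_j(\bS^q;\bS^p)$, is Fr\'echet differentiable along $\bS^q$, with derivative at $\phi_0$ the bounded linear map $\bS^q\ni\psi\mapsto D^{j+1}f(\phi_0)(\psi,\cdot,\ldots,\cdot)\in\cL_j(\bS^q;\bS^p)$, where $D^{j+1}f(\phi_0)(\psi,\cdot,\ldots,\cdot)$ denotes the $j$-linear map $(\eta_1,\ldots,\eta_j)\mapsto D^{j+1}f(\phi_0)(\psi,\eta_1,\ldots,\eta_j)$ (pointwise composition). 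Fixing $\phi_0$ and appealing to the characterization of Fr\'echet differentiability of Lemma~\ref{lm:Frech}, it then suffices to prove that the remainder
\[
  R_\varepsilon(\psi):=\varepsilon^{-1}\bigl(D^jf(\phi_0+\varepsilon\psi)-D^jf(\phi_0)\bigr)-D^{j+1}f(\phi_0)(\psi,\cdot,\ldots,\cdot)
\]
tends to $0$ in $\cL_j(\bS^q;\bS^p)$ as $\varepsilon\to0$, uniformly over $\psi$ in the unit ball of $\bS^q$. Here the mean value inequality together with the growth bound on $D^{j+1}f$ (resp.\ the Lipschitz continuity of $f$ when $j=0$) will give, pointwise in $(\omega,t)$ and for $\abs{\varepsilon}\le1$, the crude estimate $\norm{R_\varepsilon(\psi)(\omega,t)}_{\cL_j(H;H)}\lesssim\bigl(1+\norm{\phi_0(\omega,t)}^m+\norm{\psi(\omega,t)}^m\bigr)\norm{\psi(\omega,t)}$ (simply $\lesssim C_f\norm{\psi(\omega,t)}$ if $j=0$).

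The heart of the argument is then the splitting device used in the proof of Theorem~\ref{thm:F1}: write $\Omega=E\cup E^c$ with $E:=\{\psi^*>N\}$. On $E$, testing $R_\varepsilon(\psi)$ against $\eta_1,\ldots,\eta_j$ in the unit ball of $\bS^q$ and using the pointwise bound, H\"older's inequality controls the $\bS^p$-contribution by a multiple of $\norm[\big]{1_E\bigl(1+(\phi_0^*)^m+(\psi^*)^m\bigr)\psi^*\eta_1^*\cdots\eta_j^*}_{L^p(\Omega)}$; since $\tfrac1r+\tfrac nq<\tfrac1p$ and $\tfrac{n+m}q<\tfrac1p$ are \emph{strict} and $j+1\le n$, the family of functions $\bigl(1+(\phi_0^*)^m+(\psi^*)^m\bigr)\psi^*\eta_1^*\cdots\eta_j^*$, as $\psi,\eta_i$ range over the unit ball of $\bS^q$, is bounded in $L^{p_*}(\Omega)$ for some $p_*>p$, hence uniformly integrable; together with $\P(E)\le N^{-q}$ (Markov's inequality), this makes the $E$-contribution smaller than any prescribed $\vartheta>0$ once $N=N(\vartheta)$ is chosen large, uniformly in $\psi$ and $\varepsilon$. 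On $E^c$, where $\norm{\psi(\omega,t)}\le N$, the contribution is bounded, \emph{independently of $\psi$}, in terms of $\Delta_{\varepsilon,N}(\omega,t):=\sup_{\xi\in B_N(H)}\norm[\big]{\varepsilon^{-1}\bigl(D^jf(\phi_0+\varepsilon\xi)-D^jf(\phi_0)\bigr)(\omega,t)-D^{j+1}f(\phi_0)(\omega,t)\xi}_{\cL_j(H;H)}$, which by Lemma~\ref{lm:Frech} applied pointwise to $D^jf(\omega,t,\cdot)$ tends to $0$ as $\varepsilon\to0$ and is dominated by $C_N\bigl(1+(\phi_0^*)^m\bigr)\in L^p(\Omega)$; dominated convergence then finishes the inductive step, exactly as for the term $I_1$ in the proof of Theorem~\ref{thm:F1}, and with it the proof.

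I expect the only genuine difficulty to be the exponent bookkeeping: one must check that the two inequalities in the hypothesis are precisely what is needed, in their non-strict form to make the candidate derivatives well defined (via H\"older), and in their strict form to make the family on $E$ uniformly integrable with a genuine gap $p_*>p$. The pointwise Fr\'echet-differentiability input is supplied cleanly by Lemma~\ref{lm:Frech}, and one should be careful to use the polynomial-growth bound on $D^{j+1}f$ only for $j\ge1$ (which requires $j+1\le n$) and the boundedness of $Df$ for $j=0$. The degenerate case $m=0$ is a simplification in which no $\phi_0$-dependent factor appears, and where only $q>p$ is used.
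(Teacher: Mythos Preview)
Your proposal is correct and follows essentially the same route as the paper: induction on the order of differentiation, pointwise Fr\'echet differentiability combined with a mean-value / fundamental-theorem-of-calculus domination, and then the $E$--$E^c$ splitting with uniform integrability on $E$ (via the \emph{strict} H\"older inequalities) and dominated convergence on $E^c$. The paper chooses to carry out the details for $G$ rather than $f$, but the structure is identical.

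One small point: on $E^c$ you write that the dominating function $C_N(1+(\phi_0^*)^m)$ lies in $L^p(\Omega)$, but to control the $\cL_j(\bS^q;\bS^p)$-norm uniformly over $\eta_1,\ldots,\eta_j$ you actually need it in $L^{p'}(\Omega)$ with $\tfrac1{p'}=\tfrac1p-\tfrac jq$, after one more H\"older step to peel off the $\eta_i^*$ factors. This does hold, since $\tfrac1r<\tfrac1p-\tfrac nq\le\tfrac1{p'}$ gives $r>p'$ and hence $(\phi_0^*)^m\in L^r\subseteq L^{p'}$. The paper is equally terse on this point (it simply says ``proceed exactly as in the proof of Theorem~\ref{thm:F1}''), so this is not a gap but a place where both arguments leave the same easy verification to the reader.
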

\begin{proof}
  We proceed by induction on $j$, and we treat only the third term, as
  all other cases are analogous (in fact slightly simpler). If $j=1$,
  the proof is exactly the same as the corresponding one of
  Theorem~\ref{thm:F1}. In particular, one has
  \[
    DG(\omega,t,z,\cdot): H \to \cL(H,H) \qquad \forall (\omega,t,z) \in
    \Omega \times [0,T] \times Z,
  \]
  hence, given $v \in \bS^p$ and $w \in \bS^q$ with $q>1 \cdot p=p$,
  \[
    \norm[\big]{\varepsilon^{-1}\bigl(G(v_-+\varepsilon w_-)-G(v_-)\bigr)
      - DG(v_-)w_-\bigr)}_{\bG^p} \longrightarrow 0
  \]
  as $\varepsilon \to 0$, uniformly over $w$ belonging to bounded
  subsets of $\bS^q$.
  Assuming now that the statement is true for
  $j \in \{1,\ldots,n-1\}$, let us show that it also holds for
  $j+1$. By the inductive hypothesis we thus have
  \[
    D^jG: \bS^{mr} \cap \bS^p \to \cL_j(\bS^q;\bG^p).
  \]
  Let $u \in \bS^p$ and $v_1,\ldots,v_{j+1} \in \bS^q$. The
  $(j+1)$-th Fr\'echet derivatives
  \[
    D^{j+1}G(\omega,t,z,\cdot), \;
    D^{j+1}G_i(\omega,t,z,\cdot): H \to \cL_{j+1}(H;H), \qquad i=1,2,
  \]
  exists for all $(\omega,t,z) \in \Omega \times [0,T] \times Z$,
  hence, setting $\mathbf{v}_k:=(v_1,\ldots,v_k)$, $k=1,\ldots,n$, one
  has, as $\varepsilon \to 0$,
  \begin{equation}
    \label{eq:kk}
    \norm[\Big]{\frac{1}{\varepsilon} %
      \bigl(D^jG(u_-+\varepsilon (v_{j+1})_-)\mathbf{v}_{j-} %
      - D^jG(u_-)\mathbf{v}_{j-} \bigr) %
      - D^{j+1}G(u_-)(\mathbf{v}_{j+1})_-} \longrightarrow 0
  \end{equation}
  for all $(\omega,t) \in [0,T] \times Z$, $\P$-a.s., uniformly with
  respect to $v_{j+1}$ in bounded sets of $H$.
  For any $h \in H$, the fundamental theorem of calculus yields
  \begin{align*}
    &\ip[\big]{D^jG(u_-+\varepsilon (v_{j+1})_-)\mathbf{v}_{j_-}%
    - D^jG(u_-)\mathbf{v}_{j-}}{h}\\
    &\hspace{3em} = \int_0^\varepsilon
      \ip[\big]{D^{j+1}G(u_-+s(v_{j+1})_-)(\mathbf{v}_{j+1})_-}{h}\,ds,
  \end{align*}
  hence, since $h$ is arbitrary, 
  \begin{align*}
    &\norm[\bigg]{\frac{1}{\varepsilon} %
      \bigl(D^jG(u_-+\varepsilon (v_{j+1})_-\mathbf{v}_{j-} %
      - D^jG(u_-)\mathbf{v}_{j-} \bigr)}\\
    &\hspace{3em} =
      \norm[\bigg]{\frac{1}{\varepsilon} \int_0^\varepsilon
      D^{j+1}G(u_-+s(v_{j+1})_-)(\mathbf{v}_{j+1})_-\,ds}\\
    &\hspace{3em} \lesssim (g_1+g_2)
      \bigl( 1 + \norm{u_-}^m + \norm{(v_{j+1})_-}^m\bigr)
      \norm{v_{1-}} \; \cdots \; \norm{v_{j-}} \, \norm{(v_{j+1})_-}\\
    &\hspace{3em} \leq (g_1+g_2) \biggl( \prod_{k=1}^{j+1} v_k^*
      + u^{*m} \prod_{k=1}^{j+1} v_k^*
      + v_{j+1}^{*(m+1)} \prod_{k=1}^j v_k^* \biggr)
  \end{align*}
  for any $\abs{\varepsilon} \leq 1$, where, as already done before,
  $g_1:=g_2:=g/2$ if $p \geq 2$. The left-hand side of \eqref{eq:kk}
  is thus dominated for all $(t,z) \in [0,T] \times Z$, $\P$-a.s.,
  modulo a constant, by the same expression appearing on the
  right-hand side of the previous inequality. This implies
  \begin{align*}
    &\norm[\Big]{\frac{1}{\varepsilon} %
      \bigl(D^jG(u_-+\varepsilon (v_{j+1})_-)\mathbf{v}_{j-} %
      - D^jG(u_-)\mathbf{v}_{j-} \bigr) %
      - D^{j+1}G(u_-)(\mathbf{v}_{j+1})_-}_{\bG^p}\\
    &\hspace{3em} \lesssim  \kappa(T)
      \biggl( \norm[\bigg]{ \prod_{k=1}^{j+1} v_k^* }_{L^p(\Omega)}
      + \norm[\bigg]{u^{*m} \prod_{k=1}^{j+1} v_k^*}_{L^p(\Omega)}
      + \norm[\bigg]{v_{j+1}^{*(m+1)} \prod_{k=1}^j v_k^*}_{L^p(\Omega)}
      \biggr)
  \end{align*}
  where, by H\"older's inequality,
  \begin{align*}
    \norm[\bigg]{ \prod_{k=1}^{j+1} v_k^* }_{L^p(\Omega)}
    &\leq \prod_{k=1}^{j+1} \norm[\big]{v_k}_{\bS^q} < \infty,\\
    \norm[\bigg]{u^{*m} \prod_{k=1}^{j+1} v_k^*}_{L^p(\Omega)}
    &\leq \norm[\big]{u}_{\bS^{mr}}^m
      \prod_{k=1}^{j+1} \norm[\big]{v_k}_{\bS^q} < \infty,\\
    \norm[\bigg]{v_{j+1}^{*(m+1)} \prod_{k=1}^j v_k^*}_{L^p(\Omega)}
    &\leq \norm[\big]{v_{j+1}}^{(m+1)}_{\bS^q}
      \prod_{k=1}^{j} \norm[\big]{v_k}_{\bS^q} < \infty.
  \end{align*}
  In fact, these three inequalities follow from
  \[
    \frac{j+1}{q} < \frac{1}{p}, \qquad
    \frac{1}{r} + \frac{j+1}{q} < \frac{1}{p}, \qquad
    \frac{m+1}{q} + \frac{j}{q} < \frac{1}{p},
  \]
  respectively, all of which are immediate consequences of the
  assumptions. The dominated convergence theorem
  thus yields
  \[
    \norm[\Big]{\frac{1}{\varepsilon} %
      \bigl(D^jG(u_-+\varepsilon (v_{j+1})_-)\mathbf{v}_{j-} %
      - D^jG(u_-)\mathbf{v}_{j-} \bigr) %
      - D^{j+1}G(u_-)(\mathbf{v}_{j+1})_-}_{\bG^p} \longrightarrow 0
  \]
  as $\varepsilon \to 0$.
  It remains to show that the convergence is uniform with respect to
  $v_1,\ldots,v_{j+1}$ bounded in $\bS^q$. To this end, we proceed as
  in the case $j=1$: for every measurable $E \in \cF$, the
  computations just carried out yield
  \begin{align*}
    &\norm[\Big]{1_E \Bigl(\frac{1}{\varepsilon} %
      \bigl(D^kG(u_-+\varepsilon (v_{k+1})_-)\mathbf{v}_{k-} %
      - D^kG(u_-)\mathbf{v}_{k-} \bigr) %
      - D^{k+1}G(u_-)(\mathbf{v}_{k+1})_- \Bigr)}_{\bG^p}\\
    &\hspace{3em} \lesssim \E 1_E \bigl(v_1^* \,\cdots\,v_{k+1}^* \bigr)^p
      + \E 1_E \bigl(u^{*m} v_1^* \,\cdots\, v_{k+1}^* \bigr)^p
      + \E 1_E \bigl( v_1^* \,\cdots\, v_k^* v_{k+1}^{*(m+1)} \bigr)^p,
  \end{align*}
  where the implicit constant depends on $\kappa(T)$.  Since
  $v_1,\ldots,v_{k+1}$ are bounded in $\bS^q$ and $k+1 \leq n$, the
  product $v_1^* \,\cdots\, v_{k+1}^*$ is bounded in
  $\bL^{q/n}$. Therefore, as $q/n>p$ by assumption, it follows that
  $\bigl(v_1^* \,\cdots\, v_{k+1}^*\bigr)^p$ is uniformly integrable.
  Similarly, defining $s$ by
  \[
    \frac{1}{s} := \frac{1}{r} + \frac{n}{q} < \frac{1}{p},
  \]
  H\"older's inequality yields
  \[
    \norm[\big]{u^{*m} v_1^* \,\cdots\, v_{j+1}^*}_{L^s(\Omega)}
    \leq \norm[\big]{u}^m_{\bS^{mr}} \norm[\big]{v_1}_{\bS^q}
    \; \cdots \; \norm[\big]{v_{j+1}}_{\bS^q},
  \]
  where the right-hand side is finite by assumption. Since $s>p$,
  $\bigl(u^{*m} v_1^* \,\cdots\, v_{k+1}^* \bigr)^p$ is
  uniformly integrable.
  Finally, defining $\ell$ by
  \[
    \frac1\ell := \frac{n-1}{q} + \frac{m+1}{q} = \frac{m+n}{q}
    < \frac{1}{p},
  \]
  H\"older's inequality yields, recalling that $j \leq n-1$,
  \begin{align*}
    \norm[\big]{v_1^* \,\cdots\, v_j^* v_{j+1}^{*(m+1)}}_{L^\ell(\Omega)}
    &\leq \norm[\big]{v_1}_{\bS^q} \; \cdots \; \norm[\big]{v_j}_{\bS^q}
      \norm[\big]{v_{j+1}^{*(m+1)}}_{L^{q/(m+1)}(\Omega)}\\
    &= \norm[\big]{v_1}_{\bS^q} \; \cdots \; \norm[\big]{v_j}_{\bS^q}
      \norm[\big]{v_{j+1}}^{m+1}_{\bS^q} < \infty,
  \end{align*}
  hence $\bigl(v_1^* \,\cdots\, v_j^* v_{j+1}^{*(m+1)} \bigr)^p$ is
  also uniformly integrable. One can now 
  choose the set $E$ and
  proceed exactly as in the proof of Theorem~\ref{thm:F1}
  for the case $j=1$ to conclude.
\end{proof}

The previous lemma implies, in particular, that
\begin{align*}
  f: \bS^q &\longrightarrow \bS^p,\\
  B: \bS^q &\longrightarrow L^p(\Omega;L^2(0,T;\cL^2(K,H))),\\
  G: \bS^q &\longrightarrow \bG^p
\end{align*}
are $n$ times Fr\'echet differentiable for every $q>(m+n)p$.  Indeed,
for any such $q$, one has $\frac mq + \frac nq=\frac{m+n}{q}<\frac1p$,
implying in particular that $\frac1p-\frac nq\in(0,1)$. 
Setting now $\frac1r:=(\frac mq)\vee\frac12(\frac1p-\frac nq)$, 
one has $r>1$,
$\frac{1}{r}+\frac{n}{q}<\frac1p$, and $\bS^q\subseteq\bS^{p\vee mr}$.\\
In fact, if $ \frac mq>\frac12(\frac1p-\frac nq)$ one has $\frac1r=\frac mq$,
from which $\frac{1}{r}+\frac{n}{q}=\frac mq + \frac nq<\frac1p$ and
$q=mr>p$, hence $\bS^q\subset \bS^{p\vee mr}$. If
$\frac mq\leq\frac12(\frac1p-\frac nq)$ one has 
$\frac1r=\frac12(\frac1p-\frac nq)<\frac1p-\frac nq$,
from which $\frac{1}{r}+\frac{n}{q}<\frac1p$,
and $q\geq mr$, hence $\bS^q\subseteq \bS^{p\vee mr}$.
The assertion follows then from Lemma~\ref{lm:Fcomp}.

\medskip

We can now state the main result of this section, as well as of the
whole paper.
\begin{thm}
  \label{thm:Fn}
  Let $n \geq 1$,
  \[
    q > \frac{(m+n)!}{(m+1)!} p.
  \]
  Assume \emph{(F$^{n}$)} and \emph{(A5$_r$)} for all $r\in[p,q]$.
  Then the solution map $(u_0 \mapsto u):\bL^q \to \bS^p$ is $n$ times
  Fr\'echet differentiable.
  Moreover, $D^nu(u_0) \in \cL_n(\bL^q;\bS^p)$ is the unique mild
  solution $\diff{n}{u}$ to
  \[
    d\diff{n}{u} + A\diff{n}{u}\,dt = \bigl( \Psi_n + Df(u)\diff{n}{u} \bigr)\,dt +
    \bigl( \Phi_n + DB(u)\diff{n}{u} \bigr)\,dW +\int_Z\bigl( \Theta_n +
    DG(u_-)\diff{n}{u}_- \bigr)\,d\bar{\mu}.
  \]
\end{thm}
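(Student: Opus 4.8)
The natural strategy is induction on $n$, the case $n=1$ being precisely Theorem~\ref{thm:F1} (note $\frac{(m+1)!}{(m+1)!}p=p$). So assume the assertion holds with $n-1$ in place of $n$ and fix $q>\frac{(m+n)!}{(m+1)!}p$. The first, purely arithmetic, step is to choose a strictly decreasing chain of exponents $q=r_0>r_1>\cdots>r_{n-1}=p$, together with auxiliary exponents governing the polynomial-growth terms, so that at every level $j\in\{1,\ldots,n-1\}$ the hypotheses of Proposition~\ref{prop:aux} and of Lemma~\ref{lm:Fcomp} hold with the relevant exponents (strict inequalities of the type $\frac1r+\frac{j}{q}<\frac1{r_{j-1}}$ and $\frac{j+m}{q}<\frac1{r_{j-1}}$); a computation analogous to the one in the proof of Proposition~\ref{prop:aux} shows that the whole set of constraints collapses to the single requirement $q>\frac{(m+n)!}{(m+1)!}p$. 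With such a chain fixed, the inductive hypothesis (applied along the way with targets $\bS^{r_j}$) together with Proposition~\ref{prop:aux} guarantees that, for $j=1,\ldots,n-1$, the formal derivative $\diff{j}{u}$ exists, coincides with $D^ju(u_0)$, and lies in $\cL_j(\bL^q,\ldots,\bL^q;\bS^{r_j})$.

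The heart of the proof is to show that $u_0\mapsto\diff{n-1}{u}(u_0)$ is once more Fréchet differentiable, with derivative $\diff{n}{u}$. Fix directions $\mathbf h=(h_1,\ldots,h_{n-1})\in(\bL^q)^{n-1}$; then $\diff{n-1}{u}(\mathbf h)$ is the unique mild solution of \eqref{eq:xx} with $n-1$ in place of $n$, i.e. the fixed point
\[
  \diff{n-1}{u}(\mathbf h)=(I-\mathcal A(u))^{-1}\bigl(S\ast\Psi_{n-1}(\mathbf h)+S\diamond\Phi_{n-1}(\mathbf h)+S\diamond_\mu\Theta_{n-1}(\mathbf h)\bigr),
\]
where $\mathcal A(u)\colon v\mapsto S\ast Df(u)v+S\diamond DB(u)v+S\diamond_\mu DG(u_-)v_-$. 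By Lemma~\ref{lm:y} and Proposition~\ref{prop:aux}, $I-\mathcal A(u)$ is boundedly invertible on the relevant $\bS$-space for every $u_0$, with inverse bounded uniformly in $u_0$ since the operator norm of $\mathcal A(u)$ is controlled solely through $C_f$, $C_B$ and $\kappa$. Now $u_0\mapsto u(u_0)$ is Fréchet differentiable with derivative $u'=\diff{1}{u}$ (Theorem~\ref{thm:F1}), so by the chain rule and Lemma~\ref{lm:Fcomp} the operator-valued map $u_0\mapsto\mathcal A(u(u_0))$ is Fréchet differentiable, and hence so is $u_0\mapsto(I-\mathcal A(u(u_0)))^{-1}$, operator inversion being analytic on the open set of operators with invertible complement and the range of $u_0\mapsto\mathcal A(u(u_0))$ lying therein. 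Likewise, by its functional form each of $\Psi_{n-1}$, $\Phi_{n-1}$, $\Theta_{n-1}$ is a finite sum of terms $D^jf(u)\bigl(\diff{\alpha_1}{u}(\cdot),\ldots,\diff{\alpha_j}{u}(\cdot)\bigr)$ (and analogously for $B$, $G$), hence a composition of the Fréchet differentiable maps $u_0\mapsto\bigl(u,\diff{1}{u},\ldots,\diff{n-2}{u}\bigr)$ with the Fréchet differentiable maps $D^jf$, $D^jB$, $D^jG$ provided by Lemma~\ref{lm:Fcomp}; the exponent chain fixed above is exactly what makes these compositions admissible, and since Lemma~\ref{lm:Fcomp} yields Fréchet (not merely Gâteaux) differentiability, no separate uniform-integrability argument is needed. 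Multiplying a Fréchet differentiable operator-valued map by a Fréchet differentiable forcing term, we obtain that $u_0\mapsto\diff{n-1}{u}(u_0)$ is Fréchet differentiable.

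It remains to identify its derivative. Writing $z:=D\bigl[\diff{n-1}{u}(\mathbf h)\bigr]h_n$ and differentiating the fixed-point identity in the direction $h_n$ via the product and chain rules, the contributions of $Df(u)\diff{n-1}{u}$, $DB(u)\diff{n-1}{u}$, $DG(u_-)\diff{n-1}{u}_-$ produce, besides $Df(u)z$, $DB(u)z$, $DG(u_-)z_-$, the extra terms $D^2f(u)(u'h_n,\diff{n-1}{u})$ and their $B$- and $G$-analogues, which by the identities \eqref{eq:sai} cancel against the contributions of differentiating $\Psi_{n-1}$, $\Phi_{n-1}$, $\Theta_{n-1}$, leaving precisely $\Psi_n$, $\Phi_n$, $\Theta_n$ evaluated at $(h_1,\ldots,h_n)$. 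Hence $z$ solves \eqref{eq:xx}, which has a unique solution by Proposition~\ref{prop:aux}, so $z=\diff{n}{u}(h_1,\ldots,h_n)$ and therefore $D^nu(u_0)=\diff{n}{u}$, completing the induction.

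I expect the main obstacle to be twofold. First, the exponent bookkeeping: turning the chain of strict inequalities coming from Proposition~\ref{prop:aux} and Lemma~\ref{lm:Fcomp} at the $n$ successive levels of differentiation into the single clean constraint $q>\frac{(m+n)!}{(m+1)!}p$ is elementary but delicate, especially in seeing that the factor $(m+2)(m+3)\cdots(m+n)$ is what is forced by this scheme. Second, the verification that differentiating the level-$(n-1)$ equation reproduces the level-$n$ equation: this rests on \eqref{eq:sai} and, behind them, on the precise combinatorial structure of $\tilde\Phi_n^F$ (the permutations, the partial sums $\alpha_1+\cdots+\alpha_{j-1}$, and the cancellation of the terms carrying the highest-order derivative of $u$), so the cancellations must be tracked carefully to conclude that $z$ solves \eqref{eq:xx} exactly.
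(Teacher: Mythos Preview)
Your inductive strategy and the use of \eqref{eq:sai} to identify the derivative are the same as the paper's, but there is a genuine gap in the differentiation step. You invoke analyticity of operator inversion, which requires $u_0\mapsto\mathcal A(u(u_0))$ to be Fr\'echet differentiable as a map into $\cL(X,X)$ for a \emph{fixed} Banach space $X$. However, Lemma~\ref{lm:Fcomp} only yields that $u\mapsto DG(u_-)\in\cL(\bS^q;\bG^p)$ is differentiable along $\bS^q$ with derivative $D^2G(u_-)\in\cL_2(\bS^q;\bG^p)$: the directional derivative $[D\mathcal A(u)k]v$ involves $D^2G(u_-)\bigl((Du\,k)_-,v_-\bigr)$, whose pointwise bound $g(1+\norm{u}^m)\norm{(Du\,k)}\norm{v}$ forces a H\"older split, so that $[D\mathcal A(u)k]$ is bounded from $\bS^s$ to $\bS^p$ only for $s$ strictly larger than $p$. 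In other words, $u_0\mapsto\mathcal A(u)$ is \emph{not} Fr\'echet differentiable as a map into $\cL(\bS^p;\bS^p)$ (nor into $\cL(\bS^r;\bS^r)$ for any fixed $r$), because the polynomial growth of $D^2G$ makes the derivative lose integrability. Your chain $r_{n-1}=p$ compounds the problem: it places $\diff{n-1}{u}(\mathbf h)$ in $\bS^p$, leaving no room for the extra H\"older factor needed to control $[D\mathcal A(u)k]\diff{n-1}{u}(\mathbf h)$.

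The paper circumvents this by never differentiating $\mathcal A(u)$ in operator norm. It writes the difference quotient of $\diff{\ell-1}{u}$ directly from the mild equation and splits the $DG$-contribution as
\[
  DG(u_-(u_0))\Bigl(\tfrac{\diff{\ell-1}{u}_-(u_0+\varepsilon k)-\diff{\ell-1}{u}_-(u_0)}{\varepsilon}-\diff{\ell}{u}_-(u_0)k\Bigr)
  +\tfrac{DG(u_-(u_0+\varepsilon k))-DG(u_-(u_0))}{\varepsilon}\,\diff{\ell-1}{u}_-(u_0+\varepsilon k).
\]
The first term is absorbed by the usual short-time patching argument, using only the uniform bound $\norm{DG(u)}\le g$ (no differentiability of $\mathcal A$ needed). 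For the second term the difference quotient of $DG$ acts on the \emph{specific} process $\diff{\ell-1}{u}_-(u_0+\varepsilon k)$, which by the inductive hypothesis applied with target $\bS^r$, $r>(\ell+m)p$, lies in a good enough space for Lemma~\ref{lm:Fcomp} to give convergence; this is where the factor $(m+\ell)$ enters and the constraint $q>\frac{(m+\ell)!}{(m+1)!}p$ emerges. Your proposal can be repaired along these lines, but not via the abstract analytic-inversion route as written.
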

\noindent Note that this equation is nothing else than \eqref{eq:xx},
and must be interpreted as the latter, i.e. in the sense of testing
against an $n$-tuple of vectors in $\bL^q$. Moreover, the initial
condition of the equation is the identity map if $n=1$, and zero if
$n \geq 2$.
\begin{proof}
  We shall assume, for simplicity, that $f=B=0$, as the argument in the
  general case $f \neq 0$, $B\neq 0$ is entirely analogous.  
  We are going to
  argue by induction on $\ell\in\{1,\ldots,n\}$.  The statement is true for $\ell=1$ by
  Theorem~\ref{thm:F1}. Now we assume that the statement is true for
  all $j \leq \ell-1$, $\ell\in\{2,\ldots,n\}$, and we prove it for $\ell$. Let $k \in
  \bL^q$, with
  $q > \frac{(m+\ell)!}{(m+1)!}p=(m+\ell)\ldots(m+2)p$.
  Thanks to Proposition~\ref{prop:aux} and the remarks following its
  proof, the equation
  \[
  d\diff{\ell}{u} + A\diff{\ell}{u}\,dt =  \int_Z \bigl( \Theta_{\ell} + DG(u_-)\diff{\ell}{u}_-
  \bigr)\,d\bar{\mu}, \qquad \diff{\ell}{u}(0)=0,
  \]
  admits a unique mild solution $\diff{\ell}{u} \in \cL_{\ell}(\bL^{q};\bS^p)$, because
  \[
  q > (m+\ell) \cdots (m+2)p \geq (\ell+m\ell-m)p.
  \]
  We are going to show that $\diff{\ell}{u} = D^{\ell}u(u_0)$ in
  $\cL_{\ell}(\bL^{q};\bS^p)$. Let $k \in \bL^{q}$: for brevity,
  we shall use the notation
  $\diff{\ell}{u}(u_0)k:=\diff{\ell}{u}(u_0)(k,\cdot, \ldots, \cdot)\in\cL_{\ell-1}(\bL^p,\bS^p)$
  and
  $\Theta_{\ell}(u_0)k:=\Theta_\ell(u_0)(k,\cdot, \ldots, \cdot)\in\cL_{\ell-1}(\bL^p,\bS^p)$.
    One has
    \begin{equation}
    \label{eq:ccc}
    \begin{split}
      &\frac{\diff{\ell-1}{u}(u_0+\varepsilon k) - \diff{\ell-1}{u}(u_0)}{\varepsilon}
      - \diff{\ell}{u}(u_0)k\\
      &\hspace{1.5em} = S \diamond_\mu \Bigl( \frac{\Theta_{\ell-1}(u_0+\varepsilon
        k) - \Theta_{\ell-1}(u_0)}{\varepsilon} - \Theta_{\ell}(u_0)k
      \Bigr)\\
      &\hspace{1.5em} \quad + S \diamond_\mu \Bigl(
      \frac{DG(u_-(u_0+\varepsilon k))\diff{\ell-1}{u}_-(u_0+\varepsilon k)%
        - DG(u_-(u_0))\diff{\ell-1}{u}_-(u_0)}{\varepsilon}\\
      &\hspace{2em} \phantom{\quad + S \diamond_\mu \Bigl(}
        - DG(u_-(u_0))\diff{\ell}{u}_-(u_0)k \Bigr),
    \end{split}
  \end{equation}
  where, by the inductive hypothesis, $\diff{\ell-1}{u}(u_0)=D^{\ell-1}u(u_0)$
  and $\diff{\ell-1}{u}(u_0+\varepsilon k)=D^{\ell-1}u(u_0+\varepsilon k)$ in
  $\cL_{\ell-1}(\bL^{q};\bS^p)$. We need to prove
  that the left-hand side of \eqref{eq:ccc} converges to zero as
  $\varepsilon \to 0$ in $\cL_{\ell-1}(\bL^{q},\bS^p)$ uniformly over $k$
  belonging to bounded sets of $\bL^{q}$.
  Thanks to \eqref{eq:sai}, one has
  \begin{align*}
    &\frac{\Theta_{\ell-1}(u_0+\varepsilon k) -
      \Theta_{\ell-1}(u_0)}{\varepsilon} - \Theta_{\ell}(u_0)k\\
    &\hspace{3em} = \frac{\Theta_{\ell-1}(u_0+\varepsilon k) -
      \Theta_{\ell-1}(u_0)}{\varepsilon} - D\Theta_{\ell-1}(u_0)k -
    D^2G(u_-)(u'_-,\diff{\ell-1}{u}_-)k,
  \end{align*}
  and we claim that
  \begin{equation}
    \label{eq:fino}
    \frac{\Theta_{\ell-1}(u_0+\varepsilon k) - \Theta_{\ell-1}(u_0)}{\varepsilon} -
    D\Theta_{\ell-1}(u_0)k \to 0
  \end{equation}
  in $\cL_{\ell-1}(\bL^{q};\bG^p)$ as
  $\varepsilon \to 0$, uniformly over $k$ belonging to bounded subsets
  of $\bL^{q}$. In fact, all terms in $\Theta_{\ell-1}$ are of the
  form
  \begin{equation}
    \label{eq:djb}
    D^jG(u_-(u_0))\bigl(D^{\alpha_1}u_-(u_0),\ldots,D^{\alpha_j}u_-(u_0)\bigr),
  \end{equation}
  with $j\leq \ell-1$ and $\alpha_i\geq 1$, $\sum \alpha_i=\ell-1$.
  Now, let $r>(\ell+m)p$ be
  such that $(u_0 \mapsto u): \bL^r \to \bS^r$ (which is possible
  because $q>(\ell+m)p$). Then $G: \bS^r \to
  \bG^p$ is $n$ times Fr\'echet
  differentiable by Lemma~\ref{lm:Fcomp}.
  Moreover, by the inductive hypothesis
  applied to $(u_0 \mapsto u) \in (\bL^r \to \bS^r)$, we have that $u_0
  \mapsto u$ is $\ell-1$ times Fr\'echet differentiable along $\bL^q$ if
  \[
  q > \frac{(m+\ell-1)!}{(m+1)!} r > \frac{(m+\ell-1)!}{(m+1)!} (m+\ell)p =
  \frac{(m+\ell)!}{(m+1)!} p.
  \]
  Therefore, if $q$ satisfies this condition, each term of the form
  \eqref{eq:djb} is Fr\'echet differentiable along $\bL^q$
  by the theorem on the Fr\'echet
  differentiability of composite functions (see for example \cite[Prop.~1.4]{AmbPro}).
  Hence, \eqref{eq:fino} is indeed true, and the expression within
  parentheses in the first term on the right-hand side of
  \eqref{eq:ccc} converges to
  \[
    - D^2G(u_-(u_0))\bigl( Du_-(u_0),D^{\ell-1}u_-(u_0) \bigr)k
  \]
  in $\cL_{\ell-1}(\bL^q;\bG^p)$ as
  $\varepsilon \to 0$, uniformly over $k$ belonging to bounded subsets
  of $\bL^q$.
  Let us now consider the second term on the right-hand side of
  \eqref{eq:ccc}. One has, recalling that $v_j=D^j u$
  for every $j\leq \ell-1$ by inductive hypothesis,
  \begin{align*}
    &\frac{DG(u_-(u_0+\varepsilon k))\diff{\ell-1}{u}_-(u_0+\varepsilon k)%
        - DG(u_-(u_0))\diff{\ell-1}{u}_-(u_0)}{\varepsilon} %
      - DG(u_-(u_0))\diff{\ell}{u}_-(u_0)k \\
    &\hspace{3em} = DG(u_-(u_0)) \biggl(
    \frac{D^{\ell-1}u_-(u_0+\varepsilon k) - D^{\ell-1}u_-(u_0)}{\varepsilon}
    - \diff{\ell}{u}_-(u_0)k \biggr)\\
    &\hspace{3em} \quad + \frac{DG(u_-(u_0+\varepsilon
      k))%
      - DG(u_-(u_0))}{\varepsilon}D^{\ell-1}u_-(u_0+\varepsilon k),
  \end{align*}
  where the second term on the right-hand side converges to
  \[
    D^2G(u_-(u_0)) \bigl( Du_-(u_0),D^{\ell-1}u_-(u_0) \bigr)k
  \]
  in $\cL_{\ell-1}(\bL^q;\bG^p)$ as
  $\varepsilon \to 0$, uniformly over $k$ bounded in $\bL^{q}$,
  because again everything depends only on derivatives of order at
  most $\ell-1$ and we can apply the usual criteria on Fr\'echet
  differentiability of multilinear maps and composite functions. Note
  that this term cancels out with the corresponding one obtained
  previously. 
  Going back then to \eqref{eq:ccc}, testing 
  by an arbitrary element $(k_2,\ldots,k_\ell)\in (\bL^q)^{\ell-1}$,
  and using Lemma~\ref{prop:scs}, 
  we infer that
  \begin{align*}
  &\left\|\left(\frac{\diff{\ell-1}{u}(u_0+\varepsilon k)(k_2,\ldots,k_\ell) 
  - \diff{\ell-1}{u}(u_0)(k_2,\ldots,k_\ell)}{\varepsilon}
      - \diff{\ell}{u}(u_0)(k,k_2,\ldots,k_\ell)\right)
  \right\|_{\bS^p}\\
  &\lesssim
  \left\|
  \left(\frac{\Theta_{\ell-1}(u_0+\varepsilon k)(k_2,\ldots,k_\ell)
   - \Theta_{\ell-1}(u_0)(k_2,\ldots,k_\ell)}{\varepsilon}
   - \Theta_{\ell}(u_0)(k, k_2,\ldots,k_\ell\right)\right.\\
   &\qquad\qquad\left.+ \frac{DG(u_-(u_0+\varepsilon k))%
      - DG(u_-(u_0))}{\varepsilon}D^{\ell-1}u_-(u_0+\varepsilon k)(k_2,\ldots,k_\ell)\right\|_{\bG^p}\\
   &+\left\|DG(u_-(u_0)) \biggl(
    \frac{D^{\ell-1}u_-(u_0+\varepsilon k) - D^{\ell-1}u_-(u_0)}{\varepsilon}(k_2,\ldots,k_\ell)
    - \diff{\ell}{u}_-(u_0)(k,k_2,\ldots,k_\ell) \biggr)
   \right\|_{\bG^p}.
  \end{align*}
  Taking supremum over $(k_2,\ldots,k_\ell)$ bounded in $(\bL^q)^{\ell-1}$
  and using the Lipschitz-continuity of $G$,
  we infer that, for every $T_0\in\mathopen(0,T\mathclose]$,
   \begin{align*}
  &\left\|\left(\frac{\diff{\ell-1}{u}(u_0+\varepsilon k)
  - \diff{\ell-1}{u}(u_0)}{\varepsilon}
      - \diff{\ell}{u}(u_0)k\right)
  \right\|_{\cL_{\ell-1}(\bL^q;\bS^p(0,T_0))}\\
  &\lesssim
  \left\|
  \left(\frac{\Theta_{\ell-1}(u_0+\varepsilon k)
   - \Theta_{\ell-1}(u_0)}{\varepsilon}
   - \Theta_{\ell}(u_0)k\right)\right.\\
   &\qquad\qquad\left.+ \frac{DG(u_-(u_0+\varepsilon k))%
      - DG(u_-(u_0))}{\varepsilon}D^{\ell-1}u_-(u_0+\varepsilon k)\right\|_{\cL_{\ell-1}(\bL^q;\bG^p(0,T_0))}\\
   &+\kappa(T_0)\left\|\frac{D^{\ell-1}u_-(u_0+\varepsilon k) - D^{\ell-1}u_-(u_0)}{\varepsilon}
    - \diff{\ell}{u}_-(u_0)k \right\|_{\cL_{\ell-1}(\bL^q;\bG^p(0,T_0))}.
  \end{align*}
  By the continuity of $\kappa$ we can choose $T_0$ sufficiently small
  such that, after rearranging terms,
   \begin{align*}
  &\left\|\left(\frac{\diff{\ell-1}{u}(u_0+\varepsilon k)
  - \diff{\ell-1}{u}(u_0)}{\varepsilon}
      - \diff{\ell}{u}(u_0)k\right)
  \right\|_{\cL_{\ell-1}(\bL^q;\bS^p(0,T_0))}\\
  &\lesssim
  \left\|
  \left(\frac{\Theta_{\ell-1}(u_0+\varepsilon k)
   - \Theta_{\ell-1}(u_0)}{\varepsilon}
   - \Theta_{\ell}(u_0)k\right)\right.\\
   &\qquad\qquad\left.+ \frac{DG(u_-(u_0+\varepsilon k))%
      - DG(u_-(u_0))}{\varepsilon}D^{\ell-1}u_-(u_0+\varepsilon k)\right\|_{\cL_{\ell-1}(\bL^q;\bG^p(0,T_0))}.
  \end{align*}
  Using the same argument leading to \eqref{eq:ecce} in the proof of
  Proposition~\ref{prop:aux}, a classical patching argument yields then
  \begin{align*}
  &\left\|\left(\frac{\diff{\ell-1}{u}(u_0+\varepsilon k)
  - \diff{\ell-1}{u}(u_0)}{\varepsilon}
      - \diff{\ell}{u}(u_0)k\right)
  \right\|_{\cL_{\ell-1}(\bL^q;\bS^p)}\\
  &\lesssim
  \left\|
  \left(\frac{\Theta_{\ell-1}(u_0+\varepsilon k)
   - \Theta_{\ell-1}(u_0)}{\varepsilon}
   - \Theta_{\ell}(u_0)k\right)\right.\\
   &\qquad\qquad\left.+ \frac{DG(u_-(u_0+\varepsilon k))%
      - DG(u_-(u_0))}{\varepsilon}D^{\ell-1}u_-(u_0+\varepsilon k)\right\|_{\cL_{\ell-1}(\bL^q;\bG^p)}
  \end{align*}
  on the whole time interval $[0,T]$. Taking into account the remarks made above
  we have that
   \begin{align*}
  &\lim_{\eps\to0^+}\sup_{\norm{k}_{\bL^q}\leq 1}
  \left\|\left(\frac{\diff{\ell-1}{u}(u_0+\varepsilon k)
  - \diff{\ell-1}{u}(u_0)}{\varepsilon}
      - \diff{\ell}{u}(u_0)k\right)
  \right\|_{\cL_{\ell-1}(\bL^q;\bS^p)}\\
  &\lesssim\lim_{\eps\to0^+}\sup_{\norm{k}_{\bL^q}\leq 1}
  \left\|
  \left(\frac{\Theta_{\ell-1}(u_0+\varepsilon k)
   - \Theta_{\ell-1}(u_0)}{\varepsilon}
   - \Theta_{\ell}(u_0)k\right)\right.\\
   &\qquad\qquad\qquad\left.+ \frac{DG(u_-(u_0+\varepsilon k))%
      - DG(u_-(u_0))}{\varepsilon}D^{\ell-1}u_-(u_0+\varepsilon k)\right\|_{\cL_{\ell-1}(\bL^q;\bG^p)}\\
   &=\left\|-D^2G(u_-(u_0)) \bigl( Du_-(u_0),D^{\ell-1}u_-(u_0) \bigr)k\right.\\
   &\qquad\left.+D^2G(u_-(u_0)) \bigl( Du_-(u_0),D^{\ell-1}u_-(u_0) \bigr)k
   \right\|_{\cL_{\ell-1}(\bL^q;\bG^p)}\\
   &=0.
  \end{align*}
  We conclude that the left-hand side of
  converges to zero in $\cL_{\ell-1}(\bL^q;\bS^p)$ as
  $\varepsilon \to 0$, uniformly with respect to $k$ belonging to any
  bounded subset of $\bL^q$, as required.
\end{proof}


\ifbozza\newpage\else\fi
\bibliographystyle{amsplain}
\bibliography{ref,extra}

\end{document}